\documentclass[10pt,a4paper]{siamltex}
\usepackage{amsmath,amsfonts,amssymb,latexsym,epic,eepic}
\usepackage[dvips]{graphicx,graphics,epsfig,color}
\usepackage{ifthen}
\usepackage{tikz,pstricks,fp}
\newtheorem{remark}{Remark}[section]
%
%
\newcommand{\bfa}{{\boldsymbol a}}
\newcommand{\bfb}{{\boldsymbol b}}
\newcommand{\bfI}{{\boldsymbol I}}
\newcommand{\bfn}{{\boldsymbol n}}
\newcommand{\bfu}{{\boldsymbol u}}
\newcommand{\bfv}{{\boldsymbol v}}
\newcommand{\bfw}{{\boldsymbol w}}
\newcommand{\bfx}{{\boldsymbol x}}
\newcommand{\bfy}{{\boldsymbol y}}
\newcommand{\bftau}{{\boldsymbol \tau}}
\newcommand{\bfvphi}{{\boldsymbol \varphi}}
%
\newcommand{\dv}{\partial}
\newcommand{\lapi}{{\boldsymbol \Delta}}
\newcommand{\gradi}{{\boldsymbol \nabla}}
\newcommand{\dive}{{\rm div}}
\newcommand{\divv}{\boldsymbol{\rm div}}
%
\newcommand{\xN}{\mathbb{N}}
\newcommand{\xR}{\mathbb{R}}
\newcommand{\xL}{{\rm L}}
\newcommand{\xH}{{\rm H}}
\newcommand{\xbfH}{{\rm \bf H}}
%
\newcommand{\norm}[1]{|\hspace{-.1em}| #1 |\hspace{-.1em}|}
\newcommand{\brok}{{1,\disc}}
%
\newcommand{\edge}{\sigma}
\newcommand{\edges}{\mathcal{E}}
\newcommand{\edgesint}{{\mathcal E}_{{\rm int}}}
\newcommand{\edgesext}{{\mathcal E}_{{\rm ext}}}
\newcommand{\mesh}{{\mathcal M}}
\newcommand{\disc}{\mathcal{T}}
\newcommand{\edgeedgeprime}{D_\edge|D_{\edge'}}
%
\newcommand{\edged}{\varepsilon}
\newcommand{\edgesd}{\bar {\mathcal E}}
\newcommand{\edgesdint}{\bar{\mathcal E}_{{\rm int}}}
\newcommand{\edgesdext}{\bar{\mathcal E}_{{\rm ext}}}
\newcommand{\fluxd}{F_{\edge,\edged}}
\newcommand{\Ds}{{\scalebox{0.6}{$D_\edge$}}}
%
\newcommand{\eos}{\wp}
\newcommand{\ie}{\emph{i.e.}}
\newcommand{\eg}{\emph{e.g.}}
\newcommand{\exm}{^{(m)}}
\newcommand{\ma}{\varepsilon}
\newcommand{\mcal}[1]{\mathcal{#1}}

\newcommand{\deltap}{{\delta \hspace{-0.1em} p}}
\newcommand{\Ind}{\mathcal{X}}
\newcommand{\Ent}[1]{{\lfloor #1 \rfloor}}
\title{Low Mach number limit of some staggered schemes for compressible barotropic flows}
\author{
R. Herbin \thanks{I2M UMR 7373, Aix-Marseille Universit\'e, CNRS, \'Ecole Centrale de Marseille.
39 rue Joliot Curie. 13453, Marseille, France. ({\tt raphaele.herbin@univ-amu.fr})}
\and
J.-C. Latch\'e \thanks{Institut de Radioprotection et de S\^{u}ret\'{e} Nucl\'{e}aire (IRSN) ({\tt jean-claude.latche@irsn.fr})}
\and
K. Saleh \thanks{Universit\'e de Lyon, CNRS UMR 5208, Universit\'e Lyon 1, Institut Camille Jordan.
43 bd 11 novembre 1918; F-69622 Villeurbanne cedex, France. ({\tt saleh@math.univ-lyon1.fr})}
}

\begin{document}
\maketitle
\begin{center} \today \end{center}

\begin{abstract}
In this paper, we study the behaviour of some staggered discretization based numerical schemes for the barotropic Navier-Stokes equations at low Mach number.
Three time discretizations are considered: the implicit-in-time scheme and two non-iterative pressure correction schemes.
The two latter schemes differ by the discretization of the convection term: linearly implicit for the first one, so that the resulting scheme is unconditionally stable, and explicit for the second one, so the scheme is stable under a CFL condition involving the material velocity only.
We rigorously prove that these three variants are asymptotic preserving in the following sense: for a given mesh and a given time step, a sequence of solutions obtained with a sequence of vanishing Mach numbers tend to a solution of a standard scheme for incompressible flows.
This convergence result is obtained by mimicking the proof of convergence of the solutions of the (continuous) barotropic Navier-Stokes equations to that of the incompressible Navier-Stokes equation as the Mach number vanishes.
Numerical results performed with a hand-built analytical solution show the behaviour that is expected from the analysis.
\end{abstract}

\begin{keywords}
Compressible Navier-Stokes equations, low Mach number flows, finite volumes, Crouzeix-Raviart scheme, Rannacher-Turek scheme, finite elements, staggered discretizations.
\end{keywords}

\begin{AMS} 35Q31,65N12,76M10,76M12 \end{AMS}
%
%
\section{Introduction} \label{sec:intro}

We consider the non-dimensionalized system of time-dependent barotropic compressible Navier-Stokes equations, parametrized by the Mach number denoted thereafter by $\ma$, and posed for $(\bfx,t)\in\Omega\times (0,T)$: 
\begin{subequations}
\begin{align}\label{eq:pb_mass} &
\partial_t \rho^\ma + \dive( \rho^\ma\, \bfu^\ma) = 0,
\\[1ex] \label{eq:pb_mom} &
\partial_t (\rho^\ma\, \bfu^\ma) + \divv(\rho^\ma\, \bfu^\ma \otimes \bfu^\ma) 
 - \divv(\bftau(\bfu^\ma)) + \frac 1 {\ma^2}\ \gradi \eos(\rho^\ma) = 0,
\end{align} \label{eq:pb}\end{subequations}
where $T$ is a finite positive real number, and $\Omega$ is an open bounded connected subset of $\xR^d$, with $d\in\lbrace2,3\rbrace$, which is polygonal if $d=2$ and polyhedral if $d=3$.
The quantities $\rho^\ma>0$ and $\bfu^\ma=(u_1^\ma,..,u_d^\ma)^T$ are the density and velocity of the fluid.

In the isentropic case, the pressure satisfies the ideal gas law $\eos(\rho^\ma) = (\rho^\ma)^\gamma$, with $\gamma \geq 1$, the heat capacity ratio, a coefficient which is specific to the considered fluid.
However, more general barotropic cases can be considered provided the equation of state $\eos$  is a $C^1$ increasing convex function  such that $\eos'(1) >0$, see Remark \ref{rem-baro}.

Equation \eqref{eq:pb_mass} expresses the local conservation of the mass of the fluid while equation \eqref{eq:pb_mom} expresses the local balance between momentum and forces. 
We consider Newtonian fluids so that the shear stress tensor $\bftau(\bfu^\ma)$ satisfies:
\[
\divv(\bftau(\bfu)) = \mu \Delta \bfu+ (\mu+\lambda)\gradi (\dive \,\bfu),
\]
where $\mu$ and $\lambda$ are two parameters with $\mu>0$ and $\mu+\lambda>0$.
System \eqref{eq:pb} is complemented with the following boundary and initial conditions:
\begin{equation} \label{eq:pb_CI}
\rho^\ma|_{t=0} = \rho_0^\ma, \qquad \qquad \bfu^\ma|_{t=0}=\bfu_0^\ma, \qquad\qquad \bfu^\ma|_{\dv\Omega}=0.
\end{equation}
At the continuous level, when $\ma$ tends to zero, the density $\rho^\ma$ tends to a constant and the velocity tends, in a sense to be defined, to a solution of the incompressible Navier-Stokes equations \cite{lio-98-inc}.
Heuristically, the momentum equation \eqref{eq:pb_mom} indicates that $\rho^\ma$ behaves like $\bar \rho(t) + \mcal{O}(\ma^{\frac 2\gamma})$ where $\bar \rho(t)$ is a function only dependent on the time variable.
Integrating the mass conservation equation \eqref{eq:pb_mass} over $\Omega$ and using the homogeneous Dirichlet boundary condition \eqref{eq:pb_CI} (a homogeneous Neumann condition would be sufficient) then implies that $\bar \rho$ is actually a constant.
For such a result to hold, some assumptions need to be made on the initial data; in particular, the initial density $\rho_0^\ma$ must be assumed to be close to $\bar \rho$ in a certain sense.
These assumptions will be specified below.
Setting $\bar \rho=1$ (here and throughout the paper) without loss of generality, passing to the limit in the mass conservation equation \eqref{eq:pb_mass} and in the momentum balance \eqref{eq:pb_mom}, the limit velocity $\bar \bfu$ is formally seen to solve the system of incompressible Navier-Stokes equations:
\[
 \begin{aligned}
  &\dive( \bar \bfu) = 0, \\[1ex] 
  &\dv_t \bar \bfu + \divv(\bar \bfu \otimes \bar \bfu)- \mu \lapi \bar \bfu + \gradi \pi = 0,
 \end{aligned}
\]
where $\pi$ is the formal limit of $(\eos(\rho^\ma)-1)/\ma^2$.
This formal computation was justified by rigorous studies \cite{lio-98-inc, des-99-gre,des-99-mas}; see also \cite{ebi-77-mot,ebi-82-mot,kla-81-sing,kre-80-pro,sch-87-hyp} for some of the first mathematical analyses on low Mach number limits and  \cite{met-01-inc,dan-02-zer,gal-03-res,alaz-06-low,fei-07-asy,fei-07-low,fei-10-low} for some of the numerous related works.

\medskip
For the low Mach limit of numerical schemes for \eqref{eq:pb}, the issue is not so clear.
Indeed, in general, schemes designed for computing compressible flows do not boil down, when $\ma \rightarrow 0$, to standard schemes for incompressible flows, for essentially two reasons.
First, the numerical dissipation introduced to stabilize the scheme depends on the celerity of acoustic waves, which blows up when $\ma \rightarrow 0$; a reasonable approximation of the incompressible solution thus may need a very small space step, depending not only on the regularity of the continuous solution but also on $\ma$.
Second, these schemes are usually explicit in time (with a sophisticated derivation of fluxes, for instance through solutions of Riemann problems at interfaces, which lead to a nonlinear expression with respect to the unknowns), which is not compatible with a wave celerity blowing up in the incompressible limit; to cope with the low Mach number situation, an implicitation of some terms in the equations, usually the pressure gradient in the momentum balance equation and the mass fluxes divergence in the mass balance, is thus necessary, and this makes implicit-in-time discrete analogues of the wave equation for the pressure appear.
Unfortunately, the schemes for the compressible case usually use a collocated arrangement of the unknowns (specially if one intends to compute the numerical fluxes on the basis of the solution of a Riemann problem at faces, which is well suited to a cell by cell piecewise constant approximation), and the diffusion operator appearing in this wave equation (obtained by a discrete composition of the pressure gradient and the velocity divergence operators) is unstable, since collocated approximations do not satisfy a form of the so-called discrete {\em inf-sup} condition.
At the incompressible limit, the scheme will thus need an additional stabilization mechanism \cite{mog-12-pre}. 
These phenomena have been widely studied and corrections have been proposed \cite{tur-99-pre, gui-99-beh, gui-04-beh, gui-06-rec, del-10-ana, deg-11-all, cor-12-asy, haa-12-all, noe-14-wea,cha-16-all, zak-17-mac}.
To obtain a scheme accurate for all Mach number flows, an alternative route consists in starting from technologies initially designed for  the incompressible Navier-Stokes equations and extending them to compressible flows.
This approach may be traced back to the late sixties, when first attempts were done to build "all flow velocity" schemes \cite{har-68-num, har-71-num}; these algorithms may be seen as an extension to the compressible case of the celebrated MAC scheme, introduced some years before \cite{har-65-num, ara-81-pot}.
These seminal papers have been the starting point for the development of numerous schemes falling in the class of pressure correction algorithms (see \eg\ \cite{cho-68-num, tem-69-sur, gue-06-ove} for a presentation in the incompressible case), possibly iterative, in the spirit of the SIMPLE method, some of them based on staggered finite volume space discretizations \cite{cas-84-pre, iss-85-sol, iss-86-com, van-87-seg, kar-89-pre, mcg-90-sho, bij-98-uni, yoo-99-uni, col-99-pro, van-01-sta, wen-02-mac, wal-02-sem, van-03-con, vid-06-sup, kwa-09-met}; a bibliography extended to the schemes using other space discretizations may be found in \cite{her-14-ons}.

\medskip
In this paper, we address, besides a purely implicit scheme, variants of schemes falling in this latter class, namely non-iterative pressure correction schemes based on staggered discretizations.
These schemes have been developed in the last ten years, first for barotropic Euler and Navier-Stokes equations \cite{her-13-pre, her-14-ons} and then for the non-barotropic case \cite{her-14-ons, gra-16-unc}, and have been shown both theoretically and numerically to be consistent and accurate for the Navier-Stokes and Euler equations and for Mach numbers in the range of unity (including shock solutions in the inviscid cases).
In addition, some numerical experiments \cite{gra-16-unc} suggest that, when the Mach number tends to zero, the numerical solution tends to the solution of a standard scheme for incompressible or, in non-isothermal situations, quasi-incompressible flows (in the sense of the classical asymptotic model for low Mach numbers \cite{maj-85-the}).
Our aim here is to rigorously prove that, in the barotropic case, these schemes are indeed asymptotic preserving: for a given discretization (\ie\ mesh and time step), when the Mach number tends to zero, the solution tends to the solution of a standard (stable and accurate) scheme for incompressible flows.
To this purpose, we reproduce at the discrete level the analysis performed in the continuous case in \cite{lio-98-inc} (of course, with heavy simplifications on compactness arguments, especially concerning the compactness of the sequence of discrete velocities, since we work in a finite dimensional setting); to our knowledge, this is the first presentation of such a proof.
We draw the reader's attention of the fact that the results are mainly presented in the case of the Navier-Stokes equations, but we also explain how they can be easily extended to the inviscid case of the Euler equations where $\mu=\lambda=0$.  

\medskip
We address three different time-discretizations: first, a fully implicit scheme, because the convergence proof in this case is simpler and necessitates less restrictive assumptions on the initial data; then we turn to two variants which are more efficient in practice, namely two pressure correction schemes, which differ by the discretization of the convection term in the momentum balance equation, linearly implicit for the first one (so that the corresponding scheme is unconditionally stable) and fully explicit for the second one (so that the corresponding scheme is stable under a CFL condition based on the material velocity).
The paper is organized as follows.
First, for the reader's convenience, we recall the (part of) the continuous analysis which is mimicked at the discrete level (Section \ref{sec:cont}).
Then we define the meshes and unknowns used by the schemes (Section \ref{sec:mesh}) and the space discretization of the operators involved in the balance equations (Section \ref{sec:disc}).
The next three sections are devoted to the convergence analysis for the time-discretization variants: the implicit scheme (Section \ref{sec:impl}) and the two pressure correction schemes (Sections \ref{sec:proj1} and \ref{sec:proj2}).
The last section presents some numerical results for the pressure correction algorithm at various Mach numbers on a problem which is built upon a hand-built analytical solution.

%
%
\section{Incompressible limit in the continuous setting}\label{sec:cont}

The convergence when $\ma$ tends to zero of a weak solution $(\rho^\ma,\bfu^\ma)$ to the initial value problem \eqref{eq:pb}-\eqref{eq:pb_CI}  is the purpose of various papers published in the late $90'$ \cite{lio-98-inc, des-99-gre,des-99-mas}.
In this section, we wish to recall some of the key arguments that are used in these works to pass to the limit on the global weak solutions of \eqref{eq:pb}-\eqref{eq:pb_CI} as the Mach number vanishes.

\medskip
The results proven in the above-mentioned papers on the convergence of $(\rho^\ma,\bfu^\ma)$ towards a weak solution of the incompressible Navier-Stokes equations follow a two-step argument. 
The first step consists in deriving \emph{a priori} bounds on the quantities $\rho^\ma-1$ and $\bfu^\ma$ which are uniform with respect to $\ma$. These bounds imply the strong convergence of $\rho^\ma$ towards $\bar\rho=1$ in $\xL^\infty((0,T);\xL^\gamma(\Omega))$, and up to the extraction of a subsequence, the weak convergence in $\xL^2((0,T); \xH^1_0(\Omega))$ of $\bfu^\ma$ towards some function $\bar \bfu$.  
The second step consists in passing to the limit in the weak formulation of problem \eqref{eq:pb}-\eqref{eq:pb_CI} thanks to these convergence properties. 
The main difficulty in this step is the passage to the limit for the term $\divv(\rho^\ma \bfu^\ma\otimes\bfu^\ma)$ with only a weak convergence of the velocity. 

\medskip
Subsequently, we describe the main arguments to obtain the estimates on $\rho^\ma-1$ and $\bfu^\ma$, which we will mimick at the discrete level in order to prove the asymptotic preserving feature of the staggered schemes in the low Mach number limit.
However, we do not need a careful study of the nonlinear term since at the discrete level, \emph{i.e.} for a fixed mesh, all norms are equivalent and the boundedness of $(\bfu^\ma)_{\ma>0}$ is enough to obtain convergence in any finite dimensional norm up to the extraction of a subsequence, and then to pass to the limit on the numerical scheme.
%
%
\subsection{\emph{A priori} estimates}

We begin by recalling some key identities satisfied by the smooth solutions of \eqref{eq:pb}, which are then incorporated in the definition of weak solutions.

\begin{proposition}
Let $\psi_\gamma$ be the function defined for $\rho>0$ as $\psi_\gamma(\rho)=\rho\log\rho$ if $\gamma=1$, and $\psi_\gamma(\rho)=\rho^\gamma/(\gamma-1)$ if $\gamma>1$ and define $\Pi_\gamma(\rho)=\psi_\gamma(\rho)-\psi_\gamma(1)-\psi_\gamma'(1)(\rho-1)$. The smooth solutions of \eqref{eq:pb}-\eqref{eq:pb_CI} satisfy the following identities:
\begin{itemize}
 \item \emph{A kinetic energy balance}:
  \begin{equation}
  \label{eq:pb_ke}
   \dv_t (\frac 1 2 \rho^\ma\, |\bfu^\ma|^2) + \dive(\frac 1 2 \rho^\ma\, |\bfu^\ma|^2\ \bfu^\ma) 
  - \divv(\bftau(\bfu^\ma)) \cdot \bfu^\ma
  + \frac 1 {\ma^2}\ \gradi \eos(\rho^\ma)\cdot \bfu^\ma=0. 
  \end{equation}
 \item \emph{A renormalization identity}
 \begin{equation}
  \label{eq:pb_renorm}
  \dv_t \psi_\gamma(\rho^\ma) + \dive \bigl(\psi_\gamma(\rho^\ma)\, \bfu^\ma \bigr) + \eos(\rho^\ma)\, \dive  \bfu^\ma = 0. 
  \end{equation}
 \item \emph{A "positive" renormalization identity}:
 \begin{equation}
  \label{eq:pb_renorm:Pi}
  \dv_t \Pi_\gamma(\rho^\ma) + \dive \bigl(\psi_\gamma(\rho^\ma)\, \bfu^\ma - \psi_\gamma'(1)\rho^\ma \bfu^\ma\bigr) + \eos(\rho^\ma)\, \dive  \bfu^\ma = 0. 
  \end{equation}
  \item \emph{An entropy identity}:
    \begin{multline}
    \label{entrop_2}
    \dv_t (\frac 1 2 \rho^\ma\, |\bfu^\ma|^2)  
    + \frac 1 {\ma^2}\ \dv_t \Pi_\gamma(\rho^\ma) 
    +\dive \Big( \big (\frac 1 2 \rho^\ma\, |\bfu^\ma|^2 
    + \frac 1 {\ma^2}\ \psi_\gamma(\rho^\ma) 
    \\- \frac 1 {\ma^2} \psi_\gamma'(1)\rho^\ma 
    + \frac 1 {\ma^2} \eos(\rho^\ma) \big ) \bfu^\ma \Big ) 
    - \divv(\bftau(\bfu^\ma)) \cdot \bfu^\ma = 0.
    \end{multline}
\end{itemize}
\end{proposition}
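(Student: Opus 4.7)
The plan is to derive each of the four identities from the mass conservation equation \eqref{eq:pb_mass} and the momentum balance \eqref{eq:pb_mom} by classical pointwise manipulations, which are straightforward for smooth solutions.

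First, for the kinetic energy balance \eqref{eq:pb_ke}, I would take the scalar product of the momentum equation \eqref{eq:pb_mom} with $\bfu^\ma$. The combination of the time-derivative term and the convection term can be rewritten, via the product rule, as
\[
\dv_t(\rho^\ma\bfu^\ma)\cdot\bfu^\ma + \divv(\rho^\ma\bfu^\ma\otimes\bfu^\ma)\cdot\bfu^\ma
= \dv_t\bigl(\tfrac 1 2 \rho^\ma |\bfu^\ma|^2\bigr) + \dive\bigl(\tfrac 1 2 \rho^\ma |\bfu^\ma|^2\, \bfu^\ma\bigr) + \tfrac 1 2 |\bfu^\ma|^2 \bigl[\dv_t \rho^\ma + \dive(\rho^\ma \bfu^\ma)\bigr],
\]
and the last bracket vanishes by \eqref{eq:pb_mass}. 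The remaining viscous and pressure terms are kept as they appear.

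Next, for the renormalization identity \eqref{eq:pb_renorm}, I would multiply \eqref{eq:pb_mass} by $\psi_\gamma'(\rho^\ma)$ and apply the chain rule to recognise $\psi_\gamma'(\rho^\ma)\dv_t\rho^\ma = \dv_t \psi_\gamma(\rho^\ma)$ and $\psi_\gamma'(\rho^\ma)\dive(\rho^\ma\bfu^\ma) = \dive(\psi_\gamma(\rho^\ma)\bfu^\ma) + \bigl(\rho^\ma \psi_\gamma'(\rho^\ma) - \psi_\gamma(\rho^\ma)\bigr)\dive\bfu^\ma$. The key algebraic fact is the pointwise identity $\rho\, \psi_\gamma'(\rho) - \psi_\gamma(\rho) = \eos(\rho)$, which is immediately verified separately in the two cases $\gamma = 1$ (where $\eos(\rho) = \rho$) and $\gamma > 1$ (where $\eos(\rho) = \rho^\gamma$). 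The ``positive'' renormalization identity \eqref{eq:pb_renorm:Pi} then follows by subtracting $\psi_\gamma'(1)$ times \eqref{eq:pb_mass} from \eqref{eq:pb_renorm}, since $\Pi_\gamma$ and $\psi_\gamma$ differ only by the affine function $\psi_\gamma(1) + \psi_\gamma'(1)(\rho-1)$, whose time derivative produces exactly the term $-\psi_\gamma'(1)\dv_t\rho^\ma$.

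Finally, for the entropy identity \eqref{entrop_2}, I would add the kinetic energy balance \eqref{eq:pb_ke} to $\ma^{-2}$ times the positive renormalization identity \eqref{eq:pb_renorm:Pi}. The only nontrivial recombination is the fusion of the pressure work and the pressure--divergence term into a single advective flux, using $\gradi\eos(\rho^\ma)\cdot\bfu^\ma + \eos(\rho^\ma)\dive\bfu^\ma = \dive(\eos(\rho^\ma)\bfu^\ma)$, which is exactly the extra contribution appearing inside the divergence in \eqref{entrop_2}. None of these steps presents a real obstacle at the smooth level, since everything is a pointwise application of the chain and product rules; the only point requiring a small separate check is the identity $\rho\psi_\gamma'(\rho) - \psi_\gamma(\rho) = \eos(\rho)$ underlying the absorption of the pressure work, and the care needed to keep constants consistent when passing from $\psi_\gamma$ to $\Pi_\gamma$.
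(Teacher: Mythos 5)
Your proof is correct and follows essentially the same route as the paper, which simply invokes the classical derivations of \eqref{eq:pb_ke} and \eqref{eq:pb_renorm} and then obtains \eqref{eq:pb_renorm:Pi} and \eqref{entrop_2} by exactly the linear combinations you describe (adding $-\psi_\gamma'(1)\times$\eqref{eq:pb_mass} to \eqref{eq:pb_renorm}, then summing \eqref{eq:pb_ke} with $\ma^{-2}\times$\eqref{eq:pb_renorm:Pi}). Your explicit verification of the identity $\rho\,\psi_\gamma'(\rho)-\psi_\gamma(\rho)=\eos(\rho)$ and of the cancellation via the mass equation merely fills in the details the paper leaves to the reader.
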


\begin{proof}
The proof of \eqref{eq:pb_ke} and \eqref{eq:pb_renorm} are classical. Multiplying the mass conservation equation \eqref{eq:pb_mass} by $-\psi_\gamma'(1)$ and summing with \eqref{eq:pb_renorm} yields \eqref{eq:pb_renorm:Pi}. 
Summing \eqref{eq:pb_ke} and $\ma^{-2}\times$\eqref{eq:pb_renorm:Pi} yields \eqref{entrop_2}.
\end{proof}

\begin{remark}
We call \eqref{eq:pb_renorm:Pi} a "positive" renormalization identity because the function $\Pi_\gamma$ is positive thanks to the convexity of $\psi_\gamma$.
\end{remark}

Integrating \eqref{entrop_2} over $\Omega\times(0,t)$ and recalling the homogeneous Dirichlet boundary conditions on the velocity, we obtain the following estimate on the solution pair $(\rho^\ma,\bfu^\ma)$.
For all $t\in (0,T)$:
\begin{multline}
\label{estimate_Pi_eq}
\frac 12 \int_\Omega \rho^\ma(t)\, |\bfu^\ma(t)|^2 +  \frac 1 {\ma^2} \int_\Omega \Pi_\gamma(\rho^\ma(t)) + \mu \int_0^t\norm{\gradi \bfu^\ma(s)}^2_{\xL^2(\Omega)^{d\times d}}{\rm d}s \\ + (\mu+\lambda)  \int_0^t  \norm {\dive (\bfu^\ma(s))}_{\xL^2(\Omega)}^2 {\rm d}s
 \ = \  \frac 12 \int_\Omega \rho_0^\ma\, |\bfu_0^\ma|^2 +  \frac 1 {\ma^2} \int_\Omega \Pi_\gamma(\rho_0^\ma).
\end{multline}
%
%
\subsection{Asymptotic behavior of the density and velocity in the zero Mach limit}

The global entropy estimate \eqref{estimate_Pi_eq} is thus proven for any strong solution of the boundary and initial value problem \eqref{eq:pb}-\eqref{eq:pb_CI}. In the following, we always \emph{assume} the existence of a weak solution $(\rho^\ma,\bfu^\ma)$ to problem \eqref{eq:pb}-\eqref{eq:pb_CI} satisfying the following inequality:
\begin{multline}
\label{estimate_Pi}
\frac 12 \int_\Omega \rho^\ma(t)\, |\bfu^\ma(t)|^2 +  \frac 1 {\ma^2} \int_\Omega \Pi_\gamma(\rho^\ma(t)) + \mu \int_0^t\norm{\gradi \bfu^\ma(s)}^2_{\xL^2(\Omega)^{d\times d}}{\rm d}s 
 \leq \  \frac 12 \int_\Omega \rho_0^\ma\, |\bfu_0^\ma|^2 +  \frac 1 {\ma^2} \int_\Omega \Pi_\gamma(\rho_0^\ma).
\end{multline}
In particular, we assume that all the integrals involved in \eqref{estimate_Pi} are convergent.

\medskip
Estimate \eqref{estimate_Pi} allows to prove the convergence of $\rho^\ma$ towards $1$ in $\xL^\infty((0,T);\xL^\gamma(\Omega))$ provided that the right hand side, which depends on the initial conditions is uniformly bounded with respect to $\ma$.
We show in the following that it is indeed the case provided some assumptions on the initial data.

\medskip
We begin by proving the following lemma, which states crucial properties of the function $\Pi_\gamma$.
\begin{lemma}[Estimates on $\Pi_\gamma$]
\label{lem:pi}
The function $\Pi_\gamma$ has the following lower bounds:
\begin{subequations}
 \begin{align}
    & \label{lowerPi0}
    \begin{array}{l}
      \hspace{-0.65ex} \bullet \ \mbox{For all $\gamma \geq 1$ and $\delta>0$, there exists $C_{\gamma,\delta}>0$ such that:} \\
      \hspace{2cm} \begin{array}{ll}
      \Pi_\gamma(\rho) \geq C_{\gamma,\delta}\ |\rho-1|^\gamma,     & \ \forall\rho>0 \ \text{with} \ |\rho-1|\geq\delta, 
      \end{array} 
      \end{array} \\[1ex]
    & \label{lowerPi1} \bullet \  \mbox{If $\gamma \geq 2$ then} \ \Pi_\gamma(\rho) \geq |\rho-1|^2, \ \forall \rho>0. \\[1ex]
    &  \label{lowerPi2} 
    \begin{array}{l}
      \hspace{-0.65ex} \bullet \ \mbox{If $\gamma \in [1,2)$ then for all $R\in(2,+\infty)$, there exists $C_{\gamma,R}$ such that:} \\
      \hspace{2cm} \begin{array}{ll}
      \Pi_\gamma(\rho) \geq C_{\gamma,R}\ |\rho-1|^2,     & \ \forall\rho\in (0,R), \\ 
      \Pi_\gamma(\rho) \geq C_{\gamma,R}\ |\rho-1|^\gamma, & \ \forall  \rho\in [R,\infty).
      \end{array} 
      \end{array}
\end{align}
\end{subequations}
Moreover, the function $\Pi_\gamma$ has the following upper bound (for small densities): For all $\gamma \geq 1$ there exists $C_\gamma$ such that:
\begin{equation}
 \label{upperPi} \Pi_\gamma(\rho) \leq C_\gamma \,|\rho-1|^2,\ \forall\rho\in (0,2).
\end{equation}
\end{lemma}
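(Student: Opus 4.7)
The plan is to derive all four bounds from a uniform viewpoint: $\Pi_\gamma$ is the Bregman-type remainder at $\rho=1$ of the convex function $\psi_\gamma$, so it is non-negative with $\Pi_\gamma(1)=\Pi_\gamma'(1)=0$ and $\Pi_\gamma''(1)=\psi_\gamma''(1)>0$. Before turning to the four estimates I would record three auxiliary facts used repeatedly below: the Taylor expansion $\Pi_\gamma(\rho)\sim\tfrac12\psi_\gamma''(1)(\rho-1)^2$ as $\rho\to 1$; the boundary value $\Pi_\gamma(0^+)=1$ (direct computation in both cases, using $\rho\log\rho\to 0$ when $\gamma=1$); and the growth at infinity, namely $\Pi_\gamma(\rho)/\rho^\gamma\to 1/(\gamma-1)$ for $\gamma>1$ and $\Pi_1(\rho)/(\rho\log\rho)\to 1$.

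Three of the four bounds then follow from these facts by pure continuity and compactness. For \eqref{lowerPi0} I form the quotient $f(\rho)=\Pi_\gamma(\rho)/|\rho-1|^\gamma$ on the closed set $S_\delta=\{\rho>0:|\rho-1|\geq\delta\}$; it is continuous and strictly positive there, with limit $1$ at $0^+$ and limit $1/(\gamma-1)$ (or $+\infty$ when $\gamma=1$) at $+\infty$, so $C_{\gamma,\delta}:=\inf_{S_\delta}f>0$. For \eqref{lowerPi2} I treat the regions $(0,R]$ and $[R,+\infty)$ separately: on $(0,R]$ the quotient $\Pi_\gamma(\rho)/(\rho-1)^2$ extends continuously to the compact interval $[0,R]$ (with values $1$ at $\rho=0$ and $\psi_\gamma''(1)/2$ at $\rho=1$) and is strictly positive, hence has a positive minimum; on $[R,+\infty)$ the quotient $\Pi_\gamma(\rho)/|\rho-1|^\gamma$ is continuous, positive and has a positive (possibly infinite) limit at $+\infty$. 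The upper bound \eqref{upperPi} is obtained in the same spirit: the ratio $\Pi_\gamma(\rho)/(\rho-1)^2$ extends continuously to $[0,2]$ and is therefore bounded above by some $C_\gamma$.

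The only estimate that requires an actual computation is \eqref{lowerPi1}, and this is where I expect the bulk of the work to sit. My plan is an explicit factorisation: a direct expansion gives, for every $\gamma>1$,
\[
(\gamma-1)\bigl[\Pi_\gamma(\rho)-(\rho-1)^2\bigr]\ =\ \rho^\gamma-(\gamma-1)\rho^2+(\gamma-2)\rho\ =\ \rho\,M(\rho),
\]
with $M(\rho):=\rho^{\gamma-1}-(\gamma-1)\rho+(\gamma-2)$. Then $M(1)=0$ and $M'(\rho)=(\gamma-1)(\rho^{\gamma-2}-1)$ has its unique zero on $(0,+\infty)$ at $\rho=1$ when $\gamma>2$ (while $M\equiv 0$ when $\gamma=2$), so $M$ attains its global minimum $0$ at $\rho=1$ and is non-negative throughout $(0,+\infty)$; multiplying by $\rho/(\gamma-1)>0$ yields $\Pi_\gamma(\rho)\geq(\rho-1)^2$ for all $\gamma\geq 2$ and $\rho>0$. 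The delicate point, and what I anticipate as the main obstacle, is precisely finding this clean factorisation: on $(0,1)$ the second derivative $\psi_\gamma''$ itself drops below $\gamma$ when $\gamma>2$, so the sign of $\Pi_\gamma-(\rho-1)^2$ there is not visible from an elementary Taylor argument and really needs the polynomial identity above.
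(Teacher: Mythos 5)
Your proposal is correct, and for three of the four estimates it takes a genuinely different route from the paper. The paper's proof rests on the single identity
\[
\Pi_\gamma(\rho) \;=\; |\rho-1|^2\,\gamma\int_0^1(1+s(\rho-1))^{\gamma-2}(1-s)\,{\rm d}s,
\]
\emph{i.e.} the second-order Taylor remainder in integral form, and reads off \eqref{lowerPi1}, \eqref{lowerPi2} and \eqref{upperPi} by bounding the integrand in $\rho$; this yields explicit constants, namely $C_{\gamma,R}=\gamma\int_0^1(1-s)(1+s(R-1))^{\gamma-2}\,{\rm d}s$ and $C_\gamma=\gamma\int_0^1(1+s)^{\gamma-2}(1-s)\,{\rm d}s$, whereas \eqref{lowerPi0} is obtained, as in your argument, from the growth $\Pi_\gamma(\rho)\sim\rho^\gamma/(\gamma-1)$ (resp.\ $\rho\log\rho$) at infinity together with an implicit continuity--positivity argument on the bounded part of $\lbrace|\rho-1|\geq\delta\rbrace$. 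You instead derive \eqref{lowerPi0}, \eqref{lowerPi2} and \eqref{upperPi} purely by continuity and compactness of the relevant quotients, using only the three limits you record at $\rho\to1$, $\rho\to0^+$ and $\rho\to+\infty$; this is shorter and more robust (it is essentially the mechanism invoked in Remark \ref{rem-baro} for general equations of state), at the price of non-explicit constants. For \eqref{lowerPi1} your factorisation $(\gamma-1)\bigl[\Pi_\gamma(\rho)-(\rho-1)^2\bigr]=\rho\,\bigl(\rho^{\gamma-1}-(\gamma-1)\rho+(\gamma-2)\bigr)$, with the second factor minimised at $\rho=1$ where it vanishes, is a genuine improvement in transparency: the paper declares this case ``straightforward'' from the integral formula, but for $\rho<1$ and $\gamma>2$ the integrand $(1+s(\rho-1))^{\gamma-2}$ is $\leq 1$, so the pointwise bound goes the wrong way, and one actually has to check that the integral $\gamma\int_0^1(1+s(\rho-1))^{\gamma-2}(1-s)\,{\rm d}s$ remains $\geq 1$ (for instance by observing that it is nondecreasing in $\rho$ on $(0,1)$ and equals $1$ at $\rho=0$). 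Your algebraic identity settles exactly that point, as you correctly anticipated.
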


\begin{proof}
For $\gamma=1$, we have $\Pi_1=\rho\log\rho-\rho$. Hence $\Pi_1\sim\rho\log\rho$ for large values of $\rho$, which implies \eqref{lowerPi0}. Similarly, for $\gamma>1$, we have $\Pi_\gamma(\rho)=\psi_\gamma(\rho)-\psi_\gamma(1)-\psi_\gamma'(1)(\rho-1)=(\gamma-1)^{-1}(\rho^\gamma-1-\gamma(\rho-1))$, thus $\Pi_\gamma(\rho)\sim (\gamma-1)^{-1} \rho^\gamma$ for large values of $\rho$ which proves \eqref{lowerPi0}.
A second order Taylor expansion of $\psi_\gamma$ yields, for all $\gamma \geq 1$:
 \[
  \Pi_\gamma(\rho) =  |\rho-1|^2 \, \gamma \int_0^1 (1+s(\rho-1))^{\gamma-2}(1-s){\rm d}s, \qquad \text{for all} \quad \rho\in(0,+\infty).
 \]
The case $\gamma \geq 2$ is straightforward and we obtain $|\rho-1|^2\leq \Pi_\gamma(\rho)$ for all $\rho\in(0,+\infty)$ and $\Pi_\gamma(\rho)\leq C_\gamma|\rho-1|^2$ for all $\rho \leq 2$ with $C_\gamma = \gamma \int_0^1 (1+s)^{\gamma-2}(1-s){\rm d}s$. For $1\leq \gamma < 2$, we easily get  $\Pi_\gamma(\rho)\leq |\rho-1|^2$ for all $\rho\in(0,+\infty)$ and the lower bound is obtained by separating the case $\rho < R$ and $\rho \geq R$. We obtain the expected lower bound \eqref{lowerPi2} with 
\[
C_{\gamma,R}=\gamma \int_0^1  \frac{1-s}{(1+s(R-1))^{2-\gamma}}{\rm d}s.
\]
\end{proof}
\begin{remark}[The barotropic case]
\label{rem-baro}
	Lemma \ref{lem:pi} \ considers the isentropic case $\eos (\rho) = \rho^\gamma$. 
	However, the results of sections \ref{sec:impl}-\ref{sec:proj2} hold in a more general barotropic case.
	Indeed, let $\eos$ be a continuous and derivable function. 
	Defining the functions $\psi(\rho)=\rho \int \frac{\eos(s)}{s^2} \ ds$ and $\Pi(\rho)=\psi(\rho)-\psi(1)-\psi'(1)(\rho-1)$, easy computations show that the renormalization identities \eqref{eq:pb_renorm} and \eqref{eq:pb_renorm:Pi} are still valid. Moreover,
	a straightforward calculation shows that 
	\[
		\Pi(\rho) = (\rho-1)^2 \int_0^1 \frac{\eos'(s(\rho-1)+1)}{s(\rho-1)+1} (1-s) \ ds.
	\]
	 Hence, if $\eos$ is an non-decreasing $C^1$ function such that $\eos'(1) >0$,  the above integral is positive and it is a continuous function of $\rho$; thus, under these assumptions on $\eos$, there exists $\underline C_{\eos,R}$ and $\overline C_{\eos,R} \in \xR_+$ such that 
	\[
		\underline C_{\eos,R} (\rho-1)^2  \le \Pi_\gamma(\rho) \leq \overline C_{\eos,R} (\rho-1)^2 \mbox{ for } | \rho | \le R. 
	\]
Note that in the discrete setting considered below, this estimate is sufficient since the discrete density $\rho$ is bounded uniformly with respect to the Mach number. 
Indeed, the conservative discretisation of the mass balance (see Section \ref{sec:mass}) yields that the discrete density $\rho$ satisfies $\int_\Omega \rho(x,t) \ dx = \int_\Omega \rho_0(x) \ dx$, so that on a given mesh, $\rho$ is bounded by $\frac 1 {|\underline K|}\int_\Omega \rho_0(x) \ dx$  where $|\underline K|$ is the measure of the smallest cell. 

The results of sections \ref{sec:impl}-\ref{sec:proj2} are thus still valid in the barotropic case, for a non-decreasing $C^1$ function $\eos$  such that $\eos'(1) >0$.
\end{remark}

Let us now assume that the initial data is "ill-prepared", in the following sense: $\rho_0^\ma \in \xL^\infty(\Omega)$ with  $\rho_0^\ma >0$ for a.e. $\bfx\in\Omega$, $\bfu_0^\ma \in \xL^2(\Omega)^d$ and there exists $C$ independent of $\ma$ such that: 
\begin{equation}
 \label{eq:u0_rho0_ip}
 \norm{\bfu_0^\ma}_{\xL^2(\Omega)^d} \ + \ \frac{1}{\ma}\, \norm{\rho^\ma_0- 1}_{\xL^\infty(\Omega)} \ \leq \ C.
\end{equation}
This bound implies that $\rho_0^\ma$ tends to $1$ in $\xL^\infty(\Omega)$ when $\ma\to 0$; moreover, we suppose that $\bfu_0^\ma$ weakly converges in $\xL^2(\Omega)^d$ towards a function $\bar \bfu_0\in\xL^2(\Omega)^d$. The initial data is said to be "ill-prepared" since $\rho^\ma_0-1$ behaves like $\ma$ and not like $\ma^{\frac 2\gamma}$ (or $\ma^2$) as suggested by the momentum equation, and the initial velocity is \emph{not required} to be close to a divergence free velocity.

\medskip
Under assumption \eqref{eq:u0_rho0_ip}, an easy consequence of the upper bound \eqref{upperPi} on $\Pi_\gamma$ is that the right-hand side of the estimate \eqref{estimate_Pi} is bounded independently of the Mach number $\ma$.
We then obtain that for any weak solution $(\rho^\ma,\bfu^\ma)$ to \eqref{eq:pb}-\eqref{eq:pb_CI} that satisfies the global entropy estimate \eqref{estimate_Pi}, the velocity $\bfu^\ma$ is bounded in $\xL^2((0,T);\xH_0^1(\Omega)^d)$ uniformly with respect to $\ma$.

\medskip
A further consequence of the lower bound \eqref{lowerPi0} on $\Pi_\gamma$ is the convergence of $\rho^\ma$ towards $1$ as $\ma\to0$ in $\xL^\infty((0,T);\xL^\gamma(\Omega))$ as stated in the following proposition.

\begin{proposition}
\label{conve_rho_continuous_0}
Let, $(\rho_0^\ma,\bfu_0^\ma)$ be a family of ill-prepared initial data and let $(\rho^\ma,\bfu^\ma)$ be a corresponding family of weak solutions of \eqref{eq:pb}-\eqref{eq:pb_CI} that satisfy the global entropy estimate \eqref{estimate_Pi}. Then, $\rho^\ma$ converges towards $1$ as $\ma\to0$ in $\xL^\infty((0,T);\xL^\gamma(\Omega))$.
\end{proposition}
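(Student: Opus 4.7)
The plan is to convert the entropy bound on $\Pi_\gamma(\rho^\ma)$ into the desired $\xL^\gamma$ convergence, by combining the lower bounds from Lemma \ref{lem:pi} with a splitting of the domain into regions of small and large density deviation.

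\textbf{Step 1: Exploit the entropy estimate.} The ill-prepared assumption \eqref{eq:u0_rho0_ip} and the upper bound \eqref{upperPi} of Lemma \ref{lem:pi} (applied pointwise to $\rho_0^\ma$, which is close to $1$ in $\xL^\infty$ hence in particular bounded in $(0,2)$ for $\ma$ small enough) give $\int_\Omega \Pi_\gamma(\rho_0^\ma) \leq C_\gamma \|\rho_0^\ma-1\|_{\xL^2(\Omega)}^2 \leq C\ma^2$, while $\int_\Omega \rho_0^\ma|\bfu_0^\ma|^2$ is bounded uniformly in $\ma$. Plugging this into \eqref{estimate_Pi} and keeping only the non-negative term $\int_\Omega \Pi_\gamma(\rho^\ma(t))/\ma^2$ on the left-hand side yields the key bound
\[
 \sup_{t\in(0,T)} \int_\Omega \Pi_\gamma(\rho^\ma(t))\,{\rm d}\bfx \ \leq \ C\,\ma^2,
\]
with $C$ independent of $\ma$.

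\textbf{Step 2: Case $\gamma\geq 2$.} For any fixed $t\in(0,T)$, split $\Omega$ into $\{|\rho^\ma(t)-1|<1\}$ and $\{|\rho^\ma(t)-1|\geq 1\}$. On the first set, $|\rho^\ma-1|^\gamma\leq |\rho^\ma-1|^2$, which is controlled by $\Pi_\gamma(\rho^\ma)$ thanks to \eqref{lowerPi1}. On the second set, \eqref{lowerPi0} with $\delta=1$ gives $|\rho^\ma-1|^\gamma\leq \Pi_\gamma(\rho^\ma)/C_{\gamma,1}$. Summing and using Step 1 yields $\|\rho^\ma(t)-1\|_{\xL^\gamma(\Omega)}^\gamma\leq C'\ma^2$, uniformly in $t$.

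\textbf{Step 3: Case $1\leq\gamma<2$.} Fix any $R>2$ and split $\Omega$ into $\{\rho^\ma(t)<R\}$ and $\{\rho^\ma(t)\geq R\}$. On the second set, \eqref{lowerPi2} gives $|\rho^\ma-1|^\gamma\leq \Pi_\gamma(\rho^\ma)/C_{\gamma,R}$ and hence a contribution bounded by $C\ma^2$. On the first set, the $\xL^2$ part of \eqref{lowerPi2} together with Step 1 yield $\int_{\rho^\ma<R}|\rho^\ma-1|^2\leq C\ma^2/C_{\gamma,R}$, and Hölder's inequality with exponents $2/\gamma$ and $2/(2-\gamma)$ gives
\[
 \int_{\rho^\ma(t)<R} |\rho^\ma(t)-1|^\gamma\,{\rm d}\bfx \ \leq \ |\Omega|^{1-\gamma/2}\Bigl(\int_{\rho^\ma(t)<R}|\rho^\ma(t)-1|^2\,{\rm d}\bfx\Bigr)^{\gamma/2} \ \leq \ C''\,\ma^\gamma.
\]
Summing the two contributions, $\|\rho^\ma(t)-1\|_{\xL^\gamma(\Omega)}^\gamma\leq C'\ma^\gamma+C''\ma^2$, uniformly in $t\in(0,T)$.

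\textbf{Conclusion and expected obstacle.} In both cases the bound vanishes as $\ma\to 0$, which is exactly convergence in $\xL^\infty((0,T);\xL^\gamma(\Omega))$. The main obstacle is a bookkeeping one rather than a deep issue: one must be careful in the sub-critical regime $1\leq\gamma<2$ because the lower bound on $\Pi_\gamma$ is only quadratic on bounded sets and only of order $\gamma$ at infinity, so a naive use of a single lower bound does not suffice — the splitting of $\Omega$ at a threshold $R>2$ together with a Hölder interpolation, as above, is essential to reconcile the two regimes.
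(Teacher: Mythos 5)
Your proof is correct, but it follows a different decomposition from the one the paper uses for this proposition. The paper's argument is shorter and uniform in $\gamma$: it splits $\Omega$ at the level set $\lbrace|\rho^\ma-1|\geq\delta\rbrace$ for an \emph{arbitrary} $\delta>0$, bounds the complementary part crudely by $|\Omega|\,\delta^\gamma$, applies only the lower bound \eqref{lowerPi0} on the large-deviation set to get a $C\ma^2/C_{\gamma,\delta}$ contribution, and then sends $\ma\to0$ followed by $\delta\to0$; this yields the qualitative convergence claimed in the proposition with no case distinction, but no rate. You instead fix the splitting threshold ($|\rho-1|=1$ for $\gamma\geq2$, $\rho=R$ for $\gamma<2$), invoke the quadratic lower bounds \eqref{lowerPi1} and \eqref{lowerPi2} together with a H\"older interpolation on the bounded-density region, and thereby obtain explicit rates ($\norm{\rho^\ma-1}_{\xL^\gamma}\leq C\ma^{2/\gamma}$ for $\gamma\geq2$ and $\leq C\ma$ for $1\leq\gamma<2$). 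What your route buys is a quantitative strengthening that essentially anticipates the paper's Proposition \ref{conve_rho_continuous} (stated there in $\xL^q$ for $q\leq\min(2,\gamma)$ rather than in $\xL^\gamma$); what it costs is the case analysis and the dependence on the additional lower bounds of Lemma \ref{lem:pi}, none of which are needed if one only wants the convergence statement of Proposition \ref{conve_rho_continuous_0}. All the individual steps you carry out (the uniform bound $\int_\Omega\Pi_\gamma(\rho_0^\ma)\leq C\ma^2$ from \eqref{eq:u0_rho0_ip} and \eqref{upperPi}, the two-regime splittings, and the H\"older step with exponents $2/\gamma$ and $2/(2-\gamma)$) are sound.
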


\begin{proof}
By \eqref{lowerPi0} and estimate \eqref{estimate_Pi}, we have for all $\delta>0$, and $t>0$:
 \[
  \norm{\rho^\ma(t)-1}_{\xL^\gamma(\Omega)}^\gamma 
  \leq |\Omega|\, \delta^\gamma + \int_{\Omega} |\rho^\ma(t)-1|^\gamma \, \Ind_{\lbrace|\rho^\ma-1|\geq \delta \rbrace} 
  \leq |\Omega|\, \delta^\gamma + \frac{C\, \ma^2}{C_{\gamma,\delta}}.
 \]
where, for a given set $A$, $\Ind_{A}$ denotes the characteristic function of $A$.
Hence,
\[
\limsup\limits_{\ma\to 0} \norm{\rho^\ma-1}_{\xL^\infty((0,T);\xL^\gamma(\Omega))}\leq |\Omega|^{\frac 1 \gamma}\, \delta
\]
for all $\delta>0$, which concludes the proof.
\end{proof}

The following proposition provides a rate of convergence in $\xL^\infty((0,T);\xL^q(\Omega))$ of $\rho^\ma$ towards $1$ for $q\in[1,\min(2,\gamma)]$.

\begin{proposition} \label{conve_rho_continuous}
Let, $(\rho_0^\ma,\bfu_0^\ma)$ be a family of ill-prepared initial data and let $(\rho^\ma,\bfu^\ma)$ be a corresponding family of weak solutions of \eqref{eq:pb}-\eqref{eq:pb_CI} that satisfy the global entropy estimate \eqref{estimate_Pi}.
Then, the following estimates hold.
\begin{itemize}
\item[$\bullet$] If $\gamma \geq 2$, then there exists $C>0$ such that, for $\ma$ small enough:
\[
\norm{\rho^\ma-1}_{\xL^\infty((0,T);\xL^2(\Omega))} \leq C\ma.
\]
\item[$\bullet$] If $1\leq \gamma < 2$, then for $\ma$ small enough, for all $R\in(2,+\infty)$, there exists $C_R>0$ such that:
\[
\ma^{-1}\norm{(\rho^\ma-1)\Ind_{\lbrace \rho^\ma<R \rbrace}}_{\xL^\infty((0,T);\xL^2(\Omega))}
+  \ma^{-\frac 2 \gamma}\norm{(\rho^\ma-1)\Ind_{\lbrace \rho^\ma \geq R \rbrace}}_{\xL^\infty((0,T);\xL^\gamma(\Omega))}\leq C_R.  
\]
\end{itemize}

 As a consequence, for all $q\in[1,\min(2,\gamma)]$, there exists $C>0$ such that for $\ma$ small enough:
 \[
   \norm{\rho^\ma-1}_{\xL^\infty((0,T);\xL^q(\Omega))} \leq C\ma.
  \]
\end{proposition}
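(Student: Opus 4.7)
The strategy is to combine the entropy estimate \eqref{estimate_Pi} with the lower bounds on $\Pi_\gamma$ provided by Lemma \ref{lem:pi}. The very first observation needed is that the right-hand side of \eqref{estimate_Pi} is in fact uniformly bounded with respect to $\ma$: the kinetic energy contribution is controlled by $\|\bfu_0^\ma\|_{L^2}^2 \leq C$ and, by the ill-preparedness hypothesis \eqref{eq:u0_rho0_ip}, for $\ma$ small enough we have $\rho_0^\ma \in (0,2)$ a.e., so the upper bound \eqref{upperPi} gives $\Pi_\gamma(\rho_0^\ma) \leq C_\gamma |\rho_0^\ma - 1|^2 \leq C_\gamma \|\rho_0^\ma - 1\|_{L^\infty(\Omega)}^2 \leq C\ma^2$ pointwise, hence $\ma^{-2}\int_\Omega \Pi_\gamma(\rho_0^\ma) \leq C$. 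From \eqref{estimate_Pi}, we deduce that $\ma^{-2}\int_\Omega \Pi_\gamma(\rho^\ma(t)) \leq C$ uniformly in $t\in(0,T)$ and $\ma$, and this is the one estimate from which both claimed rates will follow.

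For the case $\gamma \geq 2$, the pointwise lower bound \eqref{lowerPi1} immediately gives
$\|\rho^\ma(t)-1\|_{L^2(\Omega)}^2 \leq \int_\Omega \Pi_\gamma(\rho^\ma(t)) \leq C\ma^2$, which is the first bullet. For the case $1 \leq \gamma < 2$, I would fix $R>2$ and split $\Omega$ into the two sets $\{\rho^\ma(t) < R\}$ and $\{\rho^\ma(t) \geq R\}$. Using the piecewise lower bound \eqref{lowerPi2} on each piece separately and the uniform control $\ma^{-2}\int_\Omega \Pi_\gamma(\rho^\ma(t)) \leq C$, one obtains $\int_{\{\rho^\ma<R\}} |\rho^\ma - 1|^2 \leq C_R \ma^2$ and $\int_{\{\rho^\ma\geq R\}} |\rho^\ma - 1|^\gamma \leq C_R \ma^2$, which yields precisely the second bullet after taking roots (the $L^\gamma$ norm produces the exponent $2/\gamma$).

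To derive the final $L^q$ consequence for $q\in[1,\min(2,\gamma)]$, I would argue by H\"older's inequality on the bounded domain $\Omega$. If $\gamma \geq 2$, then $q \leq 2$ and $\|\rho^\ma - 1\|_{L^q(\Omega)} \leq |\Omega|^{1/q - 1/2}\|\rho^\ma - 1\|_{L^2(\Omega)} \leq C\ma$. If $\gamma < 2$, I would again split with the indicator on $\{\rho^\ma < R\}$ and its complement: on the first piece, using $q\leq 2$ and the $\ma$-bound in $L^2$, H\"older gives a contribution $\leq C\ma$; on the second piece, using $q\leq\gamma$ and the $\ma^{2/\gamma}$-bound in $L^\gamma$, H\"older gives a contribution $\leq C\ma^{2/\gamma}$, and since $\gamma \leq 2$ implies $2/\gamma \geq 1$, this is $\leq C\ma$ for $\ma$ small enough.

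There is no genuine obstacle here; the only point that requires a little care is the pointwise control of $\Pi_\gamma(\rho_0^\ma)$ at the initial step, which is why one needs $\rho_0^\ma$ close to $1$ in $L^\infty$ (i.e., it is essential that the ill-prepared assumption uses $\|\rho_0^\ma-1\|_{L^\infty}$ and not a weaker norm, since the upper bound \eqref{upperPi} is only available for densities bounded away from infinity). Apart from that, the result is essentially a direct combination of the dissipative estimate \eqref{estimate_Pi} with the convexity-based bounds of Lemma \ref{lem:pi}.
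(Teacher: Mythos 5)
Your proposal is correct and follows essentially the same route as the paper: uniform boundedness of the right-hand side of \eqref{estimate_Pi} via the ill-preparedness assumption and the upper bound \eqref{upperPi}, then the pointwise lower bounds \eqref{lowerPi1} and \eqref{lowerPi2} of Lemma \ref{lem:pi}, with the splitting over $\lbrace \rho^\ma<R\rbrace$ and $\lbrace \rho^\ma\geq R\rbrace$ in the case $1\leq\gamma<2$. You also spell out the final $\xL^q$ consequence via H\"older's inequality, a detail the paper leaves implicit, and your treatment of it is correct.
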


\begin{proof}
As already stated, thanks to \eqref{eq:u0_rho0_ip} and using the upper bound on $\Pi_\gamma(\rho)$ for small values of $\rho$, for $\ma$ small enough, the right hand side of \eqref{estimate_Pi} is bounded by some constant $C_0$, independent of $\ma$.
We now use the lower bounds on $\Pi_\gamma(\rho)$.
For $\gamma \geq 2$, by Lemma \ref{lem:pi} combined with estimate \eqref{estimate_Pi}, we have for all $t\in(0,T)$:
\[
 \norm{\rho^\ma(t)-1}_{\xL^2(\Omega)}^2 \leq  \int_\Omega \Pi_\gamma(\rho^\ma(t))\leq \,C_0\, \ma^2.
\]
For $1\leq\gamma < 2$, invoking once again Lemma \ref{lem:pi} and estimate \eqref{estimate_Pi}, we obtain for all $t\in(0,T)$ and for all $R\in(2,+\infty)$:
\[
\begin{array}{ll}
 (i) & \displaystyle \quad  \norm{(\rho^\ma(t)-1)\Ind_{\lbrace \rho^\ma(t)\leq R\rbrace}}_{\xL^2(\Omega)}^2 \leq \, \frac{1}{C_{\gamma,R}} \int_\Omega \Pi_\gamma(\rho^\ma(t))  \leq C \, \ma^2, \\[2ex]
 (ii) & \displaystyle \quad \norm{(\rho^\ma(t)-1)\Ind_{\lbrace \rho^\ma(t)\geq R \rbrace} }_{\xL^\gamma(\Omega)}^\gamma \leq \, \frac{1}{C_{\gamma,R}} \int_\Omega \Pi_\gamma(\rho^\ma(t))  \leq \, C \, \ma^2,
 \end{array}
\]
which concludes the proof.
\end{proof}
%
%
\section{Meshes and unknowns}\label{sec:mesh} 

\begin{definition}[Staggered mesh] \label{def:disc}
A staggered discretization of $\Omega$, denoted by $\disc$, is given by a couple $\disc=(\mesh,\edges)$, where:
\begin{itemize}
\item $\mesh$, the primal mesh, is a finite family composed of non empty triangles and convex quadrilaterals for $d=2$ or non empty tetrahedra and convex hexahedra for $d=3$. The primal mesh $\mesh$ is assumed to form a partition of $\Omega$ : $\overline{\Omega}= \displaystyle{\cup_{K \in \mesh} \overline K}$.
For any $K\in\mesh$, let $\dv K  = \overline K\setminus K$ be the boundary of $K$, which is the union of cell faces.
We denote by $\edges$ the set of faces of the mesh, and we suppose that two neighbouring cells share a whole face: for all $\edge\in\edges$, either $\edge\subset \dv\Omega$ or there exists $(K,L)\in \mesh^2$ with $K \neq L$ such that $\overline K \cap \overline L  = \overline \edge$; we denote in the latter case $\edge = K|L$.
We denote by $\edgesext$ and $\edgesint$ the set of external and internal faces: $\edgesext=\lbrace \edge \in \edges, \edge \subset \dv \Omega \rbrace$ and $\edgesint=\edges \setminus \edgesext$.
For $K \in \mesh$, $\edges(K)$ stands for the set of faces of $K$. The unit vector normal to $\edge \in \edges(K)$ outward $K$ is denoted by $\bfn_{K,\edge}$.
In the following, the notation $|K|$ or $|\edge|$ stands indifferently for the $d$-dimensional or the $(d-1)$-dimensional measure of the subset $K$ of $\xR^d$ or $\edge$ of $\xR^{d-1}$ respectively.
\medskip
\item We define a dual mesh associated with the faces $\edge\in\edges$ as follows.
When $K\in\mesh$ is a simplex, a rectangle or a cuboid, for $\edge \in \edges(K)$, we define $D_{K,\edge}$ as the cone with basis $\edge$ and with vertex the mass center of $K$ (see Figure \ref{fig:mesh}).
We thus obtain a partition of $K$ in $m$ sub-volumes, where $m$ is the number of faces of $K$, each sub-volume having the same measure $| D_{K,\edge}|= |K|/m$.
We extend this definition to general quadrangles and hexahedra, by supposing that we have built a partition still of equal-volume sub-cells, and with the same connectivities.
The volume $D_{K,\edge}$ is referred to as the half-diamond cell associated with $K$ and $\edge$.
For $\edge \in \edgesint$, $\edge=K|L$, we now define the diamond cell $D_\edge$ associated with $\edge$ by $D_\edge=D_{K,\edge} \cup D_{L,\edge}$.
We denote by $\edgesd(D_\edge)$ the set of faces of $D_\edge$, and by $\edged=D_\edge|D_{\edge'}$ the face separating two diamond cells $D_\edge$ and $D_{\edge'}$.
As for the primal mesh, we denote by $\edgesdint$ the set of dual faces included in the domain and by $\edgesdext$ the set of dual faces lying on the boundary $\dv \Omega$.
In this latter case, there exists $\edge\in\edgesext$ such that $\edged=\edge$.
\end{itemize}
\end{definition}

\medskip
Relying on this definition, we now define a staggered space discretization.
The degrees of freedom for the density (\ie\ the discrete density unknowns) are associated with the cells of the mesh $\mesh$, and are denoted by:
\[
\big\{ \rho_K,\ K \in \mesh \big\},
\]
while the degrees of freedom for the velocity are located at the center of the faces of the mesh $\mesh$ and are therefore associated with the cells of the dual mesh $D_\edge$, $\edge\in\edges$ (as in the low-degree nonconforming finite-element discretizations proposed in \cite{cro-73-con, ran-92-sim}).
The Dirichlet boundary conditions are taken into account by setting the velocity unknowns associated with an external face to zero, so the set of discrete velocity unknowns reads:
\[
\lbrace \bfu_\edge \in \xR^d,\ \edge \in \edgesint\rbrace.
\]

We associate functions with the discrete unknowns of the schemes described hereinafter. To this purpose, we define the following sets of discrete functions of the space variable.

\begin{definition}[Discrete functional spaces]\label{def:disc_space}
Let $\disc=(\mesh,\edges)$ be a staggered discretization of $\Omega$ as defined in Definition \ref{def:disc}.
\begin{itemize}
 \item We denote by $\xL_\mesh(\Omega)\subset \xL^\infty(\Omega)$ the space of scalar functions which are piecewise constant on each primal mesh cell $K\in\mesh$.
For all $w\in \xL_\mesh(\Omega)$ and for all $K\in\mesh$, we denote by $w_K$ the constant value of $w$ in $K$, so the function $w$ reads:
\[
w(\bfx)= \sum_{K \in \mesh} w_K\, \mathcal{X}_K(\bfx) \qquad \mbox{for a.e. } \bfx \in \Omega,
\]
where $\mathcal{X}_K$ stands for the characteristic function of $K$.\\[0.5ex]
\item We denote by $\xH_\edges(\Omega)\subset \xL^\infty(\Omega)$ the space of scalar functions which are piecewise constant on each diamond cell of the dual mesh $D_\edge,~\edge\in\edges$.
For all $u \in \xH_\edges(\Omega)$ and for all $\edge\in\edges$, we denote by $u_\edge$ the constant value of $u$ in $D_\edge$, so the function $u$ reads:
\[
u(\bfx)= \sum_{\edge\in\edges} u_\edge\, \mathcal{X}_{D_\edge}(\bfx) \qquad \mbox{for a.e. } \bfx \in \Omega,
\]
where $\mathcal{X}_{D_\edge}(\bfx)$ stands for the characteristic function of $D_\edge$. We denote by $\xbfH_\edges(\Omega)=\xH_\edges(\Omega)^d$ the space of vector valued (in $\xR^d$) functions that are constant on each diamond cell $D_\edge$. Finally,
we denote $\xH_{\edges,0}(\Omega)=\bigl\lbrace u\in \xH_\edges(\Omega),\ u_\edge=0 \text{ for all } \edge \in \edgesext \bigr\rbrace$ and $\xbfH_{\edges,0}(\Omega)=\xH_{\edges,0}(\Omega)^d$ .
\end{itemize}
\end{definition}

\begin{figure}[tb]
\begin{center}
\includegraphics[width=10cm,height=6cm]{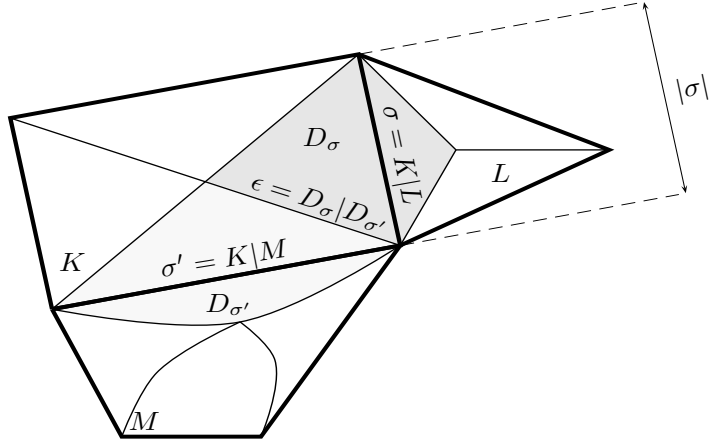}
\caption{Notations for control volumes and dual cells.}
\label{fig:mesh}
\end{center}
\end{figure}

%
%
\section{Space discretization} \label{sec:disc} 

This section is devoted to the construction of the discrete space differential operators that approximate the differential operators in \eqref{eq:pb}.
As already said, the discretization is staggered.
The discrete operators involved in the discretization of the mass equation \eqref{eq:pb_mass} are thus associated with the cells of the primal mesh $K$, $K \in \mesh$, while the discrete operators involved in the discretization of the momentum equation \eqref{eq:pb_mom} are associated with the cells of the dual mesh $D_\edge$, $\edge\in\edgesint$.
%
%
\subsection{Mass convection flux} \label{sec:mass}
The discretization of the convection term $\dive(\rho\bfu)$ in the mass conservation equation is defined as follows. Given a discrete density field $\rho\in\xL_\mesh(\Omega)$ and a velocity field $\bfu\in\xbfH_{\edges,0}(\Omega)$, it is a piecewise constant function on each primal cell $K\in\mesh$ given by:
\begin{equation}
\label{div_mass}
\dive(\rho \bfu)_K = \frac 1 {|K|} \sum_{\edge \in\edges(K)} F_{K,\edge}(\rho,\bfu), \qquad \forall K \in \mesh.
\end{equation}
The quantity $F_{K,\edge}(\rho,\bfu)$ stands for the mass flux across $\edge$ outward $K$.
By the impermeability boundary conditions, it vanishes on external faces and is given on internal faces by:
\begin{equation}\label{eq:def_FKedge}
F_{K,\edge}(\rho,\bfu)= |\edge|\ \rho_\edge\ \bfu_\edge\cdot \bfn_{K,\edge}, \qquad \forall \edge\in\edgesint,\, \edge=K|L.
\end{equation}
The density at the face $\edge=K|L$ is approximated by the upwind technique, \ie\ $\rho_\edge=\rho_K$ if $\bfu_\edge\cdot \bfn_{K,\edge} \geq 0$ and $\rho_\edge=\rho_L$ otherwise.
%
%
\subsection{Velocity convection operator} \label{sec:vel_conv_op}

We now describe the approximation of the convection operator $\partial_t(\rho \bfu) + \divv(\rho \bfu \otimes \bfu)$ appearing in the momentum balance equation.
The approximation of the time derivative part $\partial_t(\rho \bfu)$ is naturally discretized at the dual cells $D_\edge$, $\edge\in\edgesint$ and an approximation $\rho_\Ds$ of the density on these dual cells $D_\edge$ is thus needed.
Given a density field $\rho \in\xL_\mesh(\Omega)$, this approximation is built as follows:
\begin{equation}\label{eq:def_rho}
|D_\edge|\, \rho_\Ds = |D_{K,\edge}|\, \rho_K + |D_{L,\edge}|\, \rho_L,  \qquad \forall \edge \in \edgesint,\ \edge=K|L.
\end{equation}

\medskip
In order to consider different time discretizations, we build a space discretization of a more general divergence part of the velocity convection operator, $\divv(\rho \bfu \otimes \bfv)$.
Given a discrete density field $\rho \in\xL_\mesh(\Omega)$, and two discrete velocity fields $\bfu\in\xbfH_{\edges,0}(\Omega)$ and $\bfv\in\xbfH_{\edges,0}(\Omega)$, this approximation is built as follows:
\begin{equation} \label{eq:div_conv}
\divv(\rho \bfu \otimes \bfv)_\edge= \frac 1 {|D_\edge|} \sum_{\edged\in\edgesd(D_\edge)} \fluxd(\rho,\bfu)\ \bfv_\edged, \qquad  \forall \edge \in \edgesint.
\end{equation}
$\fluxd(\rho,\bfu)$ is the mass flux across the edge $\edged$ of the dual cell $D_\edge$. Its value is zero if $\edged\in\edgesdext$. Otherwise, it is defined as a linear combination, with constant coefficients, of the primal mass fluxes at the neighboring faces. 
For $K \in \mesh$ and $\edge \in \edges(K)$, let $\xi_K^\edge$ be given by:
\[
\xi_K^\edge=\frac{|D_{K,\edge}|}{|K|},
\]
so that $\sum_{\edge \in \edges(K)} \xi_K^\edge=1$.
With the definition of the dual mesh adopted here, the value of the coefficients $\xi_K^\edge$ only depend on the type of the cell $K$ (simplicial or quandrangular/hexahedral).
For the quadrangular and hexahedral elements, we have $\xi_K^\edge=1/(2d)$ and, for the simplicial elements, $\xi_K^\edge=1/(d+1)$.
Then the mass fluxes through the inner dual faces are supposed to satisfy the following properties.
\begin{itemize}
\item[(H1)] The discrete mass balance over the half-diamond cells is satisfied, in the following sense.
For all primal cell $K$ in $\mesh$, the set $(\fluxd(\rho,\bfu))_{\edged\subset K}$ of dual fluxes included in $K$ solves the following linear system
\begin{equation}\label{eq:F_syst}
F_{K,\edge}(\rho,\bfu) + \sum_{\edged \in \edgesd(D_\sigma),\ \edged \subset K} F_{\edge,\edged}(\rho,\bfu)=
\xi_K^\edge \sum_{\edge' \in \edges(K)} F_{K,\edge'}(\rho,\bfu), \quad \edge \in \edges(K).
\end{equation}
\item[(H2)] The dual fluxes are conservative, \ie\ for any dual face $\edged=D_\edge|D_\edge'$, we have $F_{\edge,\edged}(\rho,\bfu)=-F_{\edge',\edged}(\rho,\bfu)$.
\item[(H3)] The dual fluxes are bounded with respect to the primal fluxes $(F_{K,\edge}(\rho,\bfu))_{\edge \in \edges(K)}$, in the sense that
\begin{equation}\label{eq:F_bounded}
|F_{\edge,\epsilon}(\rho,\bfu)| \leq \ \max \,\left \lbrace |F_{K,\edge'}(\rho,\bfu)|,\ \edge' \in \edges(K) \right \rbrace,
\end{equation}
for $K \in \mesh$, $\edge \in \edges(K)$, $\epsilon \in \edgesd(D_\sigma)$ with $\edged\subset K$.
\end{itemize}
The system of equations \eqref{eq:F_syst} only depends on the type of the cell $K$ (since it only depends on the coefficient $\xi_K^\edge$, which is chosen as the inverse of the number of cell faces, and sub-cell connectivities) but has an infinite number of solutions, which makes necessary to impose in addition the constraint \eqref{eq:F_bounded}; however, assumptions (H1)-(H3) are sufficient for the subsequent developments, in the sense that any choice for the expression of the fluxes satisfying these assumptions yields stable and consistent schemes (see \cite{lat-18-conv, lat-18-disc}).

\medskip
To complete the definition of the convective flux, we just have now to give the expression of the velocity $\bfv_\edged$ at the dual face. As already said, a dual face lying on the boundary is also a primal face, and the flux across that face is zero.
Therefore, the values $\bfv_\edged$ are only needed at the internal dual faces; we choose them to be centered:
\[
\bfv_\edged = \frac 1 2 ( \bfv_\edge + \bfv_{\edge'}), \qquad \mbox{for } \edged=D_\edge|D_\edge'.
\]
%
%
\subsection{Diffusion term}

The space discretization of the diffusion term $\divv (\bftau(\bfu))$ in the momentum balance equation relies on the Crouzeix-Raviart element for the simplicial cells $K$ and on the \emph{parametric} Rannacher-Turek (or rotated bilinear) element for quadrangular or hexahedral cells  (see \cite{ran-92-sim}).
Let $\mathbb{P}_K$ be the affine transformation between the reference unit simplex and the simplicial cell $K$.
The space of discrete functions over a simplicial cell $K$ is
\[
 P_1(K)= \Bigl\lbrace f\circ \mathbb{P}_K^{-1},\, \text{with}\, f\in {\rm span}\, \bigl\lbrace 1,\,(x_{i})_{i=1,\ldots,d}\bigr\rbrace\Bigr \rbrace.
\]
Let $\mathbb{Q}_K$ be the standard $Q_1$ mapping between the reference unit cuboid and the quadrangular or hexahedral cell $K$.
The space of discrete functions over such a cell is 
\[
\tilde{Q}_1(K)= \Bigl\lbrace f\circ \mathbb{Q}_K^{-1},\,\text{with}\,f\in 
{\rm span}\, \bigl\lbrace 1,\,(x_{i})_{i=1,\ldots,d},\,( x_{i}^2- x_{i+1}^2)_{i=1,\ldots,d-1}\bigr\rbrace\Bigr\rbrace.
\]
The shape functions are the functions $\zeta_\edge,\,\edge\in\edges$ such that for all $K\in\mesh$, $\zeta_\edge|_K\in P_1(K)$ if $K$ is a simplicial cell and $\zeta_\edge|_K\in \tilde{Q}_1(K)$ if $K$ is a quadrangular or hexahedral cell and which satisfy the following two conditions: 
\begin{subequations}
\begin{align}
\label{eq:def_saut_EF} (i) & \quad 
\int_\edge [\zeta]_\edge(\bfx) = 0, \quad\text{where} \ \
[\zeta]_\edge(\bfx) = \lim_{\substack{\bfy \to \bfx \\ \bfy\in L}} \zeta(\bfy)-
\lim_{\substack{\bfy \to \bfx \\ \bfy\in K}} \zeta(\bfy),
\,  \forall\bfx \in \edge, \,\forall\edge\in\edgesint, \,\edge=K|L. \\[1ex]
\label{eq:def_zeta} (ii) &  \quad
\dfrac{1}{|\edge'|}\int_{\edge'} \zeta_\edge(\bfx) = \delta_\edge^{\edge'},
\quad  \forall\edge, \edge' \in \edges,
\end{align}
\end{subequations}
with $\delta_\edge^{\edge'}=1$ if $\edge=\edge'$ and $\delta_\edge^{\edge'}=0$ otherwise.
Note that condition $(i)$ is consistent with a location of the velocity degrees of freedom at the faces.

\medskip
Now, with a discrete velocity field $\bfu\in\xbfH_{\edges,0}(\Omega)$, one classically associates, in the finite element context, the function $\hat\bfu(\bfx)=\sum_{\edge\in\edges}\bfu_\edge\zeta_\edge(\bfx)$. 
The discretization of the diffusion term is a piecewise constant function on each diamond cell $D_\edge$, the value of which reads
\begin{equation}\label{eq:def_diff}
\divv (\bftau(\bfu))_\edge = 
 - \mu \Big ( \frac{1}{|D_\edge|}\sum_{K \in \mesh} \int_K \gradi\hat\bfu \, . \gradi \zeta_\edge \Big ) 
- (\mu + \lambda) \Big (\frac{1}{|D_\edge|} \sum_{K \in \mesh} \int_K \dive(\hat\bfu) \gradi \zeta_\edge \Big ).
\end{equation}
The identification between $\bfu\in\xbfH_{\edges,0}(\Omega)$ and $\hat \bfu$ allows to introduce the broken Sobolev $\xH^1$ semi-norm $\norm{.}_\brok$, given for any $\bfu \in \xbfH_{\edges}(\Omega)$ by:
\[
\norm{\bfu}_\brok^2=\sum_{K\in \mesh} \int_K \gradi \hat{\bfu}:\gradi \hat{\bfu}.
\]
The semi-norm $\norm{\bfu}_\brok$ is in fact a norm on the space $\xbfH_{\edges,0}(\Omega)$, thanks to a classical discrete Poincar\'e inequality.

\medskip
As in the continuous setting, it is easily seen that the bilinear form derived from the discretization of the diffusion term controls the discrete $\xH^1$-norm of the velocity as stated in the following lemma:
\begin{lemma}[Coercivity of the diffusion operator] \label{lmm:diff_coerc}
For every discrete velocity field $\bfu\in\xbfH_{\edges,0}(\Omega)$, one has:
\[
\sum_{\edges \in \edgesint} |D_\edge|\ \bfu_\edge \cdot \big( - \divv (\bftau(\bfu))_\edge \big ) \geq  \mu\ \norm{\bfu}_\brok^2.
\]
\end{lemma}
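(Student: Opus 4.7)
The proof is a direct finite-element-style computation, so my plan is just to unfold the definitions carefully. The first move is to substitute the definition \eqref{eq:def_diff} of $\divv(\bftau(\bfu))_\edge$ into the left-hand side. The factor $|D_\edge|$ in the outer sum cancels the $1/|D_\edge|$ in the definition, yielding
\[
\sum_{\edge \in \edgesint} |D_\edge|\ \bfu_\edge \cdot \bigl(-\divv(\bftau(\bfu))_\edge\bigr) = \mu \sum_{\edge \in \edgesint} \sum_{K\in\mesh} \bfu_\edge \cdot \int_K \gradi\hat\bfu \cdot \gradi\zeta_\edge + (\mu+\lambda) \sum_{\edge \in \edgesint} \sum_{K\in\mesh} \bfu_\edge \cdot \int_K \dive(\hat\bfu)\,\gradi\zeta_\edge.
\]
Since $\bfu_\edge = 0$ for every $\edge\in\edgesext$ (homogeneous Dirichlet condition), I may extend each inner sum from $\edgesint$ to $\edges$ at no cost, which is exactly what is needed to reconstruct $\hat\bfu$.

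Next I would swap the order of the two sums and push $\bfu_\edge$ inside the integral. On each primal cell $K$, the broken gradient $\gradi\hat\bfu|_K$ is a well-defined $d\times d$ matrix even for the Rannacher--Turek element, so the bookkeeping is cell-by-cell. For the first term, writing components gives $\sum_\edge \bfu_\edge \cdot (\gradi\hat\bfu \cdot \gradi\zeta_\edge) = \sum_{i,j} (\partial_j \hat u_i)\,\partial_j\bigl(\sum_\edge (\bfu_\edge)_i \zeta_\edge\bigr) = \sum_{i,j}(\partial_j \hat u_i)^2 = \gradi\hat\bfu:\gradi\hat\bfu$; integrating over $K$ and summing over $K\in\mesh$ therefore produces exactly $\norm{\bfu}_\brok^2$. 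For the second term, the same manoeuvre yields $\sum_\edge \bfu_\edge\cdot \gradi\zeta_\edge = \dive\hat\bfu$ on each $K$, so the integrand collapses to $(\dive\hat\bfu)^2$.

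Putting the two pieces together I obtain
\[
\sum_{\edge \in \edgesint} |D_\edge|\ \bfu_\edge \cdot \bigl(-\divv(\bftau(\bfu))_\edge\bigr) = \mu\,\norm{\bfu}_\brok^2 + (\mu+\lambda)\sum_{K\in\mesh}\int_K (\dive\hat\bfu)^2,
\]
and since $\mu+\lambda>0$ the last term is non-negative, which gives the claimed bound. There is essentially no real obstacle here; the only point requiring a little care is that $\hat\bfu$ is nonconforming on quadrangular/hexahedral meshes, so the identities $\gradi\hat\bfu$ and $\dive\hat\bfu$ have to be read as broken (cell-wise) differential operators, consistently with the definition of $\norm{\cdot}_\brok$ and the $\sum_{K\in\mesh}$ appearing in \eqref{eq:def_diff}.
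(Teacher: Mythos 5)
Your proof is correct and is exactly the standard finite-element computation the paper has in mind (the lemma is stated without proof as "easily seen"): unfolding \eqref{eq:def_diff}, cancelling the $|D_\edge|$ factors, and resumming the shape functions to recover $\mu\,\norm{\bfu}_\brok^2+(\mu+\lambda)\sum_{K}\int_K(\dive\hat\bfu)^2$, with the last term discarded since $\mu+\lambda>0$. Your remark that the gradients and divergences must be read cell-wise for the nonconforming elements is the right precaution and is consistent with the broken norm $\norm{\cdot}_\brok$.
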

%
%
\subsection{Pressure gradient term}

The discretization of the pressure gradient term $\gradi \eos(\rho)$ is a piecewise constant function on each diamond cell $D_\edge$, the value of which is denoted $(\gradi p)_\edge$.
For $\rho\in\xL_\mesh(\Omega)$, this term is defined as:
\begin{equation}\label{eq:def_grad_p}
(\gradi p)_\edge=\frac{|\edge|}{|D_\edge|} \ (\eos(\rho_L)-\eos(\rho_K))\ \bfn_{K,\edge}, \qquad \forall\edge=K|L \in \edgesint.
\end{equation}
This pressure gradient is only defined at internal faces since, thanks to the impermeability boundary conditions, no momentum balance equation is written at the external faces. 

\medskip
The following discrete duality relation holds for all $\rho\in\xL_\mesh(\Omega)$ and $\bfu \in \xbfH_{\edges,0}(\Omega)$:
\begin{equation} \label{eq:grad-div}
\sum_{K \in \mesh} |K|\ \eos(\rho_K)\ \dive (\bfu)_K
+\sum_{\edge\in\edgesint} |D_\edge|\ \bfu_\edge \cdot (\gradi p)_\edge
=0,
\end{equation}
where we have set for all $K\in\mesh$, $\dive (\bfu)_K = |K|^{-1}\sum_{\edge\in\edges(K)}|\edge|\, \bfu_\edge\cdot\bfn_{K,\edge}$ (consistently with \eqref{div_mass} for $\rho\equiv 1$).

\bigskip
We finish this section with the following lemma which states that the staggered approximation is \textit{inf-sup} stable.
\begin{lemma} \label{lmm:inf-sup}
There exists $\beta>0$, depending only on $\Omega$ and on the mesh, such that for all $p=\lbrace p_K,\,K\in\mesh\rbrace\in \xL_\mesh(\Omega)$, there exists $\bfu\in\xbfH_{\edges,0} $ satisfying:
\[
\norm{\bfu}_\brok=1 \text{ and } \sum_{K\in\mesh} |K|\ p_K\,\dive (\bfu )_K \geq \beta\, \norm{p-m(p)}_{L^2(\Omega)},
\]
where $m(p)=|\Omega|^{-1}\sum_{K\in\mesh}|K|p_K$ is the mean value of $p$ over $\Omega$.
\end{lemma}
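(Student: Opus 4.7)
The plan is to use a standard Fortin-operator argument to reduce the discrete inf-sup to the classical Ne\v{c}as inf-sup condition on the continuous level. Recall that for any $q \in L^2(\Omega)$ with zero mean there exists $\bfv \in H^1_0(\Omega)^d$ satisfying
\[
\int_\Omega q\, \dive \bfv \,\geq\, \beta_c\, \|q\|_{L^2(\Omega)} \quad \text{and} \quad \|\bfv\|_{H^1_0(\Omega)^d} \leq 1,
\]
for some $\beta_c>0$ depending only on $\Omega$. Given $p \in \xL_\mesh(\Omega)$, I apply this to $q = p - m(p)$ (which has zero mean by construction) and seek a discrete velocity $\bfu$ built from $\bfv$.

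The natural candidate is the Fortin-type interpolant $\Pi\bfv \in \xbfH_{\edges,0}(\Omega)$ defined face by face by the mean value
\[
(\Pi \bfv)_\edge \,=\, \frac{1}{|\edge|}\int_\edge \bfv,\qquad \forall\, \edge \in \edges,
\]
which vanishes on $\edgesext$ because $\bfv \in H^1_0(\Omega)^d$. The first key property to verify is the commutation identity
\[
\dive(\Pi\bfv)_K \,=\, \frac{1}{|K|} \int_K \dive \bfv \quad \text{for all } K \in \mesh,
\]
which follows immediately from the definition of the discrete divergence and the divergence theorem applied cell by cell. Consequently, for any $p \in \xL_\mesh(\Omega)$,
\[
\sum_{K \in \mesh} |K|\, p_K\, \dive(\Pi\bfv)_K \,=\, \int_\Omega p\, \dive \bfv \,=\, \int_\Omega (p-m(p))\, \dive \bfv,
\]
where the last equality uses $\int_\Omega \dive \bfv = 0$ for $\bfv \in H^1_0(\Omega)^d$.

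The second key property is the broken $H^1$-stability
\[
\norm{\Pi\bfv}_\brok \,\leq\, C\, \|\bfv\|_{H^1_0(\Omega)^d},
\]
where $C$ depends only on the mesh regularity. This is the standard interpolation estimate for the Crouzeix--Raviart element on simplices and for the (parametric) Rannacher--Turek element on quadrilaterals/hexahedra; it is proved by a local scaling argument on the reference element together with the Bramble--Hilbert lemma, exploiting precisely the fact that the shape functions are unisolvent with respect to face averages, cf.\ condition \eqref{eq:def_zeta}. Combining the two properties and rescaling $\bfu = \Pi\bfv / \norm{\Pi\bfv}_\brok$ (the case $\Pi\bfv = 0$ giving $\int_\Omega (p-m(p))\dive\bfv = 0$, in which one reduces to $p \equiv m(p)$ and chooses any $\bfu$ of unit broken norm) yields the claim with $\beta = \beta_c/C$.

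The only genuine obstacle is the proof of the $H^1$-stability in the Rannacher--Turek case, since for general quadrilateral/hexahedral meshes the parametric space $\tilde Q_1(K)$ is not invariant under affine pull-back; however, this is a classical verification under standard shape-regularity assumptions, which we treat as known. The simplicial case is fully standard since $P_1(K)$ is affine-invariant.
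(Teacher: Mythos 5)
The paper does not actually prove Lemma \ref{lmm:inf-sup}: it is stated as a known property of the Crouzeix--Raviart / Rannacher--Turek pairs, and the surrounding text only remarks that, the mesh being fixed, the precise dependence of $\beta$ on the mesh is irrelevant here. Your Fortin-operator argument is the standard proof of that classical fact, and it is correct: the continuous Ne\v{c}as inequality, the face-average interpolant $(\Pi\bfv)_\edge=|\edge|^{-1}\int_\edge\bfv$, the commutation $\dive(\Pi\bfv)_K=|K|^{-1}\int_K\dive\bfv$ (divergence theorem, using that face traces of $H^1$ functions are in $\xL^2(\edge)$), the cancellation of $m(p)$ via $\int_\Omega\dive\bfv=0$, and the normalization step (including the degenerate case $\Pi\bfv=0$) are all sound. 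As you correctly identify, the only real mathematical content is the broken $\xH^1$-stability $\norm{\Pi\bfv}_\brok\le C\,\norm{\bfv}_{\xH^1(\Omega)^d}$, which for the simplicial (affine) case follows from the reference-element scaling plus Poincar\'e--Wirtenger argument you sketch, and for the parametric Rannacher--Turek element requires the shape-regularity hypotheses of \cite{ran-92-sim}; flagging this and citing it as known is consistent with the level of detail the paper itself adopts, since the paper cites nothing more. The one thing worth making explicit if this were to be written out in full is that the constant $C$ in the stability bound (hence $\beta$) depends on the mesh regularity, which is exactly the caveat the paper makes after the lemma statement.
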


The \textit{inf-sup} property is crucial when passing to the limit $\ma\to 0$ in the various schemes presented thereafter.
It indeed provides an $\xL^2$ control on the discrete (zero-mean) pressure through the control of its gradient.

\medskip
In fact, the actual \textit{inf-sup} stability condition states that the constant $\beta$ only depends on the regularity of the mesh (in a sense to be defined), and not on the space step; this property, which is satisfied by low-order staggered discretizations, is inherited by the limit incompressible scheme and guarantees its stability and the fact that error estimates do not blow up when the mesh is refined.
In this paper, since we work on a fixed discretization, the dependency of $\beta$ with respect to the mesh does not need to be precisely stated.    
%
%
\section{Asymptotic analysis of the zero Mach limit for an implicit scheme}\label{sec:impl}

We begin with the analysis of the zero Mach limit for a fully implicit scheme.
Let $\delta t>0$ be a constant time step. 
The approximate solution $(\rho^n,\bfu^n)\in\xL_\mesh(\Omega)\times\xbfH_{\edges,0}(\Omega)$ at time $t_n=n\delta t$ for $1\leq n\leq N=\Ent{T/\delta t}$ is computed by induction through the following implicit scheme. 

\medskip
Knowing $(\rho^n,\bfu^n)\in\xL_\mesh(\Omega)\times\xbfH_{\edges,0}(\Omega)$, solve for $\rho^{n+1}\in\xL_\mesh(\Omega)$ and $\bfu^{n+1}\in\xbfH_{\edges,0}(\Omega)$:
\begin{subequations}\label{eq:implicit_scheme}
\begin{align}
\label{eq:isch_mass}  & 
\dfrac 1 {\delta t}(\rho^{n+1}_K-\rho^n_K) + \dive(\rho^{n+1} \bfu^{n+1})_K = 0, & \forall K \in \mesh,
\\[2ex]  
&  \dfrac 1 {\delta t} \bigl(\rho^{n+1}_\Ds \bfu^{n+1}_\edge-\rho^n_\Ds \bfu_\edge^n \bigr)
+ \divv(\rho^{n+1} \bfu^{n+1}\otimes \bfu^{n+1})_\edge
\nonumber \\
\label{eq:isch_mom} & \hspace{35ex} - \divv (\bftau(\bfu^{n+1}))_\edge
+ \dfrac{1}{\ma^2}(\gradi p^{n+1})_\edge
=0, & \forall \edge \in \edgesint.
\end{align}
\end{subequations}
%
%
\subsection{Initialization of the scheme}

The initial approximations are given by the average of the initial density $\rho_0^\ma$ on the primal cells and the initial velocity $\bfu_0^\ma$ on the dual cells:
\begin{equation}\label{eq:inicond_is}
\begin{array}{ll} 
\displaystyle \rho_K^0 = \frac 1 {|K|} \int_K \rho_0^\ma, & \qquad \forall \, K \in \mesh, \\[4ex]
\displaystyle \bfu_\edge^0 = \frac 1 {|D_\edge|} \int_{D_\edge} \bfu_0^\ma, & \qquad \forall \, \edge \in \edgesint.
\end{array}
\end{equation}

\medskip
Thereafter, we prove that for every $\ma>0$, there exists a solution $(\rho^\ma,\bfu^\ma)$ to the implicit scheme \eqref{eq:implicit_scheme}-\eqref{eq:inicond_is} and that for a fixed discretization, \emph{i.e.} for a fixed mesh and a fixed time step $\delta t$, the solution  $(\rho^\ma,\bfu^\ma)$ converges as $\ma\to 0$ towards the solution of an implicit scheme for the incompressible Navier-Stokes equations, which is stable thanks to the inf-sup condition.

\medskip
\textbf{Assumption on the initial data} -- For the convergence study performed in this section, it is sufficient to assume that the initial data is ill-prepared, in the sense of Inequality \eqref{eq:u0_rho0_ip}.
%
%
\subsection{A priori estimates} 

We begin with a first lemma which states that the velocity convection operator defined in Section \ref{sec:vel_conv_op} is built so that if a discrete mass conservation equation is satisfied on the cells of the primal mesh (as in \eqref{eq:isch_mass}) - which is consistent with the staggered discretization - then a discrete mass conservation equation is \emph{also} satisfied on each cell of the dual mesh (see \cite{ans-11-anl} for a proof).

\begin{lemma}
\label{lmm:mass_D}
Let two density fields $ \rho^n$, $\rho^{n+1}\in \xL_\mesh(\Omega)$ and a velocity field $\bfu^{n+1} \in \xbfH_{\edges,0}(\Omega)$ satisfying the discrete mass conservation equation \eqref{eq:isch_mass} on every cell of the primal mesh be given.
Then, the dual densities $\lbrace \rho_\Ds^n, \rho_\Ds^{n+1}, \, \edge \in \edges\rbrace$ and the dual fluxes $\lbrace \fluxd(\rho^{n+1},\bfu^{n+1}), \, \edge \in \edgesint, \, \edged \in \edgesd(D_\edge) \rbrace$ satisfy a finite volume discretization of the mass balance \eqref{eq:pb_mass} over the internal dual cells:
\begin{equation}\label{eq:mass_D}
\frac{|D_\edge|}{\delta t} \ (\rho^{n+1}_\Ds-\rho^n_\Ds)
+ \sum_{\edged\in\edgesd(D_\edge)} \fluxd(\rho^{n+1},\bfu^{n+1})=0, \qquad  \forall\edge\in\edgesint.
\end{equation}
\end{lemma}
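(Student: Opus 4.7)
The plan is to carry out a direct algebraic manipulation, combining the definition \eqref{eq:def_rho} of $\rho_\Ds$, the primal mass balance \eqref{eq:isch_mass} applied in each of the two cells sharing the face $\edge$, and the structural assumption (H1) that defines the dual fluxes. There is no analysis per se; the whole content of the lemma is the algebraic compatibility built into the definition of the dual flux system.

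Fix $\edge = K|L \in \edgesint$. First I would rewrite the dual time derivative by splitting the dual cell:
\[
\frac{|D_\edge|}{\delta t}(\rho^{n+1}_\Ds - \rho^n_\Ds)
= \frac{|D_{K,\edge}|}{\delta t}(\rho^{n+1}_K - \rho^n_K) + \frac{|D_{L,\edge}|}{\delta t}(\rho^{n+1}_L - \rho^n_L)
= \xi_K^\edge \frac{|K|}{\delta t}(\rho^{n+1}_K - \rho^n_K) + \xi_L^\edge \frac{|L|}{\delta t}(\rho^{n+1}_L - \rho^n_L),
\]
using $|D_{K,\edge}| = \xi_K^\edge |K|$. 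Next, I would substitute the primal mass balance \eqref{eq:isch_mass} in $K$ and in $L$ to replace each time-increment with (minus) the sum of the primal fluxes through the corresponding cell's faces. This turns the right-hand side into $-\xi_K^\edge \sum_{\edge' \in \edges(K)} F_{K,\edge'}(\rho^{n+1}, \bfu^{n+1}) - \xi_L^\edge \sum_{\edge' \in \edges(L)} F_{L,\edge'}(\rho^{n+1}, \bfu^{n+1})$.

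The key step is then to invoke (H1) at the face $\edge$, once with respect to $K$ and once with respect to $L$: each weighted sum $\xi_K^\edge \sum_{\edge'\in\edges(K)} F_{K,\edge'}$ becomes $F_{K,\edge} + \sum_{\edged \in \edgesd(D_\edge),\,\edged \subset K} F_{\edge,\edged}$, and similarly for $L$. Substituting back, the two terms $F_{K,\edge}(\rho^{n+1},\bfu^{n+1}) + F_{L,\edge}(\rho^{n+1},\bfu^{n+1})$ cancel because $\bfn_{K,\edge} = -\bfn_{L,\edge}$ in the definition \eqref{eq:def_FKedge}. What remains is exactly the sum of the dual fluxes over the two halves of $\dv D_\edge$ inside $K$ and $L$, which, since $\edgesd(D_\edge)$ is the disjoint union of these two collections, is $\sum_{\edged \in \edgesd(D_\edge)} F_{\edge,\edged}$; any part of $\dv D_\edge$ lying on $\dv\Omega$ contributes zero by the convention that the dual flux vanishes on $\edgesdext$. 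Moving this sum to the left-hand side yields \eqref{eq:mass_D}.

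There is no real obstacle; the point to watch is simply the bookkeeping between primal and dual fluxes at the shared boundary $\edge$ of $D_{K,\edge}$ and $D_{L,\edge}$, and the proper identification of $\edgesd(D_\edge)$ as the disjoint union of the dual-face families lying in $K$ and in $L$. The conservativity property (H2) is not needed for this lemma (it only asserts that \emph{each} dual flux between two diamonds is well-defined independently of the orientation), nor is the bound (H3); only the linear system (H1) plays a role.
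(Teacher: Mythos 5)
Your proof is correct. The paper itself does not spell out an argument for this lemma (it only refers to the reference \cite{ans-11-anl}), and your computation -- splitting $|D_\edge|(\rho_\Ds^{n+1}-\rho_\Ds^n)$ over the two half-diamonds via \eqref{eq:def_rho}, substituting the primal mass balance in $K$ and $L$, invoking (H1) at the face $\edge$ from each side, and cancelling $F_{K,\edge}+F_{L,\edge}=0$ -- is exactly the standard derivation, with the correct observation that only (H1) (and not (H2) or (H3)) is used.
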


The following result states that any solution of the implicit scheme satisfies a discrete counterpart to the kinetic energy balance \eqref{eq:pb_ke}; its derivation relies on the previous relation, namely the dual mass balance \eqref{eq:mass_D}.

\begin{lemma}[Discrete kinetic energy balance] \label{lmm:i_ke}
Any solution to the implicit scheme \eqref{eq:implicit_scheme} satisfies the following equality, for all $\edge \in \edgesint$, and $0\leq n \leq N-1$:
\begin{multline} \label{eq:i_ke}
\dfrac 1 {2\delta t} \Big ( \rho_\Ds^{n+1}\,|\bfu_\edge^{n+1}|^2 - \rho_\Ds^n\,|\bfu_\edge^n|^2 \Big )
+ \frac{1}{2|D_\edge|} \sum_{\edged=D_\edge|D_\edge'}\fluxd(\rho^{n+1},\bfu^{n+1}) \,\bfu_\edge^{n+1}\cdot \bfu_{\edge'}^{n+1}
\\
- \divv(\bftau(\bfu^{n+1}))_\edge \cdot  \bfu_\edge^{n+1} 
+ \frac{1}{\ma^2}(\gradi p^{n+1})_\edge \cdot \bfu_\edge^{n+1} + R_\edge^{n+1}=0,
\end{multline}
where $R_\edge^{n+1}= \dfrac{1}{2\delta t} \rho_\Ds^n|\bfu_\edge^{n+1}-\bfu_\edge^n|^2$.
\end{lemma}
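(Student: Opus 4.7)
The plan is to take the dot product of the discrete momentum balance \eqref{eq:isch_mom} with $\bfu_\edge^{n+1}$ and to reorganize the time derivative and convection terms so that they produce the desired kinetic energy fluxes plus the nonnegative remainder $R_\edge^{n+1}$. The only nontrivial ingredient is the dual mass balance \eqref{eq:mass_D} provided by Lemma \ref{lmm:mass_D}, which is what makes the cancellations work.

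First I would treat the discrete time derivative. Using the elementary algebraic identity
\[
(a\bfv - b\bfw)\cdot \bfv = \tfrac12 \bigl(a|\bfv|^2 - b|\bfw|^2\bigr) + \tfrac12 (a-b)\,|\bfv|^2 + \tfrac12\, b\,|\bfv-\bfw|^2,
\]
valid for all scalars $a,b$ and vectors $\bfv,\bfw$, applied with $a=\rho_\Ds^{n+1}$, $b=\rho_\Ds^n$, $\bfv=\bfu_\edge^{n+1}$, $\bfw=\bfu_\edge^n$, I would rewrite
\[
\tfrac 1 {\delta t}(\rho_\Ds^{n+1}\bfu_\edge^{n+1}-\rho_\Ds^n\bfu_\edge^n)\cdot \bfu_\edge^{n+1}
= \tfrac 1{2\delta t}\bigl(\rho_\Ds^{n+1}|\bfu_\edge^{n+1}|^2 - \rho_\Ds^n|\bfu_\edge^n|^2\bigr)
+ \tfrac 1{2\delta t}(\rho_\Ds^{n+1}-\rho_\Ds^n)|\bfu_\edge^{n+1}|^2 + R_\edge^{n+1}.
\]

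Next I would handle the convective term. By the definition \eqref{eq:div_conv} with the centered choice $\bfu_\edged^{n+1} = \tfrac12(\bfu_\edge^{n+1}+\bfu_{\edge'}^{n+1})$, one gets
\[
\divv(\rho^{n+1}\bfu^{n+1}\otimes\bfu^{n+1})_\edge \cdot \bfu_\edge^{n+1}
= \frac{1}{2|D_\edge|}\sum_{\edged=D_\edge|D_{\edge'}} \fluxd(\rho^{n+1},\bfu^{n+1})\bigl(|\bfu_\edge^{n+1}|^2 + \bfu_\edge^{n+1}\cdot \bfu_{\edge'}^{n+1}\bigr).
\]
The term involving $|\bfu_\edge^{n+1}|^2$ factors out and, thanks to Lemma \ref{lmm:mass_D} applied with the data $(\rho^n,\rho^{n+1},\bfu^{n+1})$, equals exactly $-\tfrac{1}{2\delta t}(\rho_\Ds^{n+1}-\rho_\Ds^n)|\bfu_\edge^{n+1}|^2$. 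This cancels precisely the residual contribution from the reorganization of the time derivative, leaving only the desired symmetric convective term $\tfrac{1}{2|D_\edge|}\sum \fluxd\,\bfu_\edge^{n+1}\cdot\bfu_{\edge'}^{n+1}$ together with the conservative flux differences of $\tfrac12\rho_\Ds|\bfu_\edge|^2$.

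Adding the diffusion and pressure-gradient contributions, which are untouched (they are simply dotted with $\bfu_\edge^{n+1}$), yields \eqref{eq:i_ke}. The main obstacle to watch for is ensuring that the dual mass balance \eqref{eq:mass_D} is invoked with exactly the same density/velocity pair $(\rho^{n+1},\bfu^{n+1})$ that appears in the dual fluxes of the convective operator; this is guaranteed by the implicit character of the scheme and by assumption (H1), which is precisely what Lemma \ref{lmm:mass_D} uses. Beyond this point, the derivation is a straightforward algebraic manipulation and the nonnegativity of $R_\edge^{n+1}$ is immediate from $\rho_\Ds^n > 0$.
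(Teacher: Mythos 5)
Your proposal is correct and follows essentially the same route as the paper: dot the momentum balance with $\bfu_\edge^{n+1}$, split the time-derivative term via the same algebraic identity (yours is the paper's identity divided by two), and cancel the resulting $\frac{1}{2\delta t}(\rho_\Ds^{n+1}-\rho_\Ds^n)|\bfu_\edge^{n+1}|^2$ against the $|\bfu_\edge^{n+1}|^2$ part of the centered convection term using the dual mass balance \eqref{eq:mass_D}. The only blemish is the phrase about "conservative flux differences of $\tfrac12\rho_\Ds|\bfu_\edge|^2$" remaining at the end --- no such extra term survives in the local identity \eqref{eq:i_ke}; conservativity of the dual fluxes only plays a role later, when summing over faces.
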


\begin{proof}
Let us take the scalar product of the discrete momentum balance equation \eqref{eq:isch_mom} by the corresponding velocity unknown $\bfu_\edge^{n+1}$, which gives the relation $T_\edge^{\rm conv} - \divv(\bftau(\bfu^{n+1}))_\edge \cdot  \bfu_\edge^{n+1} + \frac{1}{\ma^2}(\gradi p^{n+1})_\edge \cdot \bfu_\edge^{n+1}=0$, with:
\[
T_\edge^{\rm conv} =
\Bigl(
\dfrac 1 {\delta t} \bigl(\rho^{n+1}_\Ds \bfu^{n+1}_\edge-\rho^n_\Ds \bfu_\edge^n \bigr)
+ \dfrac 1 {2|D_\edge|}\ \sum_{\substack{\edged \in \edgesd(D_\edge) \\ \edged=\edgeedgeprime}}  \fluxd(\rho^{n+1},\bfu^{n+1})\ (\bfu^{n+1}_\edge+\bfu^{n+1}_{\edge'})
\Bigr)\cdot \bfu_\edge^{n+1}.
\]
Now, using the identity $2\, (\rho|\bfa|^2-\rho^*\bfa\cdot\bfb) = \rho|\bfa|^2 - \rho^*|\bfb|^2 + \rho^* |\bfa-\bfb|^2 + (\rho-\rho^*)|\bfa|^2$ with $\rho=\rho_\Ds^{n+1}$, $\rho^*=\rho_\Ds^n$, $\bfa=\bfu_\edge^{n+1}$ and $\bfb=\bfu_\edge^n$, we obtain
\[\begin{aligned}
T_\edge^{\rm conv}
&
= \dfrac 1 {2\delta t} \Bigl( \rho^{n+1}_\Ds |\bfu^{n+1}_\edge|^2-\rho^n_\Ds |\bfu_\edge^n|^2 \Bigr)
+ \dfrac 1 {2 |D_\edge|}\ \sum_{\edged=\edgeedgeprime} \fluxd(\rho^{n+1},\bfu^{n+1})\ \bfu^{n+1}_\edge \cdot \bfu^{n+1}_{\edge'}
\\ &
+ \dfrac 1 {2\delta t}\ \rho^n_\Ds\ | \bfu^{n+1}_\edge - \bfu^n_\edge |^2
+ \Bigl(\frac 1 {\delta t} \ (\rho^{n+1}_\Ds-\rho^n_\Ds)
+ |D_\edge|^{-1}\sum_{\edged\in\edges(D_\edge)} \fluxd(\rho^{n+1},\bfu^{n+1}) \Bigr)\, \frac{|\bfu_\edge^{n+1}|^2} 2.
\end{aligned}\]
The last term is equal to zero thanks to \eqref{eq:mass_D}, which concludes the proof.
\end{proof}

\medskip
We then prove that any solution of the implicit scheme satisfies a discrete counterpart of the renormalization identities \eqref{eq:pb_renorm} and \eqref{eq:pb_renorm:Pi} satisfied by any smooth solution of \eqref{eq:pb}.
To state this result, we need to extend the notation for divergence operators on the primal cells as follows.
For $K \in \mesh$ and a smooth function $\varphi$,
\[
\dive\big(\varphi(\rho)\,\bfu\big)_K = \frac 1 {|K|} \sum_{\edge \in \edges(K)} |\edge|\ \varphi(\rho_\edge)\,\bfu_\edge \cdot \bfn_{K,\edge},
\]
where $\rho_\edge$ stands for the upwind value of the density at the face.

\begin{lemma}[Discrete renormalization identities] \label{lem:i_renorm}
Define the function $\psi_\gamma$ as 
$\psi_\gamma(\rho)= \rho\int^\rho_0\frac{\eos(s)}{s^2} ds $, which yields $\psi_\gamma(\rho)=\rho\log\rho$ if $\gamma=1$ and $\psi_\gamma(\rho)=\rho^\gamma/(\gamma-1)$ if $\gamma>1$, and define $\Pi_\gamma(\rho)=\psi_\gamma(\rho)-\psi_\gamma(1)-\psi_\gamma'(1)(\rho-1)$. Then, any solution to the implicit scheme \eqref{eq:implicit_scheme} satisfies the following two identities, for all $K\in\mesh$ and $0\leq n \leq N-1$:
\begin{align} & \label{eq:i_renorm}
\dfrac 1 {\delta t} \Big ( \psi_\gamma(\rho^{n+1}_K)-\psi_\gamma(\rho^n_K) \Big )
+ \dive(\psi_\gamma(\rho^{n+1}) \bfu^{n+1})_K
+ \eos(\rho_K^{n+1}) \, \dive(\bfu^{n+1})_K + R_K^{n+1} = 0,
\\[2ex] & \nonumber
\dfrac 1 {\delta t} \Big ( \Pi_\gamma(\rho^{n+1}_K)-\Pi_\gamma(\rho^n_K) \Big ) 
+ \dive\Big(\big(\psi_\gamma(\rho^{n+1}) - \psi_\gamma'(1)\,\rho^{n+1}\big)\, \bfu^{n+1} \Big)_K
\\ & \label{eq:i_renorm:Pi} \hspace{50ex}
+  \eos(\rho_K^{n+1}) \, \dive( \bfu^{n+1})_K + R_K^{n+1} = 0,
\end{align}
with :
\begin{multline*}
R_K^{n+1} = \frac{1}{2 \delta t} \, \psi_\gamma''(\bar\rho_K^{n+\frac 1 2})\, (\rho_K^{n+1}-\rho_K^n)^2
+\frac{1}{2 |K|} \sum_{\edge = K|L} |\edge| \, (\bfu_\edge^{n+1}\cdot \bfn_{K,\edge})^- \, \psi_\gamma''(\bar \rho_\edge^{n+1}) \, (\rho_L^{n+1}-\rho_K^{n+1})^2,
\end{multline*}
where $\bar \rho_K^{n+\frac 1 2}\in[\min(\rho_K^{n+1},\rho_K^n),\max(\rho_K^{n+1},\rho_K^n)]$, $\bar \rho_\edge^{n+1}\in[\min(\rho_\edge^{n+1},\rho_K^{n+1}),\max(\rho_\edge^{n+1},\rho_K^{n+1})]$ for all $\edge\in\edges(K)$, and for $a\in\xR$, $a^-\geq 0$ is defined by $a^-=-\min(a,0)$. Since $\psi_\gamma$ is a convex function, $R_K^{n+1}$ is non-negative.
\end{lemma}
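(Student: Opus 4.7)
The plan is to derive \eqref{eq:i_renorm} by multiplying the discrete mass balance \eqref{eq:isch_mass} by $\psi_\gamma'(\rho_K^{n+1})$, and then to obtain \eqref{eq:i_renorm:Pi} from \eqref{eq:i_renorm} by subtracting $\psi_\gamma'(1)$ times \eqref{eq:isch_mass}, noting that $\Pi_\gamma(\rho)=\psi_\gamma(\rho)-\psi_\gamma(1)-\psi_\gamma'(1)(\rho-1)$, so that the constants telescope in the time increment and only modify the flux, producing $(\psi_\gamma(\rho^{n+1})-\psi_\gamma'(1)\rho^{n+1})\bfu^{n+1}$ inside the divergence. The key algebraic identity driving the computation is
\begin{equation*}
\rho\,\psi_\gamma'(\rho)-\psi_\gamma(\rho)=\eos(\rho),
\end{equation*}
which follows directly from the definition $\psi_\gamma(\rho)=\rho\int_0^\rho \eos(s)/s^2\,\mathrm{d}s$, since $\psi_\gamma'(\rho)=\eos(\rho)/\rho+\int_0^\rho \eos(s)/s^2\,\mathrm{d}s$.

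For the time derivative, I would use a second-order Taylor expansion of $\psi_\gamma$ between $\rho_K^n$ and $\rho_K^{n+1}$:
\begin{equation*}
\psi_\gamma'(\rho_K^{n+1})(\rho_K^{n+1}-\rho_K^n)=\psi_\gamma(\rho_K^{n+1})-\psi_\gamma(\rho_K^n)+\frac{1}{2}\,\psi_\gamma''(\bar\rho_K^{n+1/2})\,(\rho_K^{n+1}-\rho_K^n)^2,
\end{equation*}
with $\bar\rho_K^{n+1/2}$ between $\rho_K^n$ and $\rho_K^{n+1}$. This produces the first contribution to $R_K^{n+1}$, which is non-negative by convexity of $\psi_\gamma$. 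For the flux term, I would split each face $\edge=K|L\in\edges(K)$ according to the sign of $\bfu_\edge^{n+1}\cdot\bfn_{K,\edge}$. When this quantity is non-negative, the upwind choice gives $\rho_\edge^{n+1}=\rho_K^{n+1}$, and the identity above yields $\psi_\gamma'(\rho_K^{n+1})\rho_\edge^{n+1}=\psi_\gamma(\rho_\edge^{n+1})+\eos(\rho_K^{n+1})$ exactly. When it is strictly negative, $\rho_\edge^{n+1}=\rho_L^{n+1}$ and a second-order Taylor expansion of $\psi_\gamma$ around $\rho_K^{n+1}$ gives
\begin{equation*}
\psi_\gamma'(\rho_K^{n+1})\,\rho_L^{n+1}=\psi_\gamma(\rho_L^{n+1})+\eos(\rho_K^{n+1})-\frac{1}{2}\,\psi_\gamma''(\bar\rho_\edge^{n+1})\,(\rho_L^{n+1}-\rho_K^{n+1})^2;
\end{equation*}
multiplying by the negative factor $\bfu_\edge^{n+1}\cdot\bfn_{K,\edge}$ converts the minus sign into the non-negative upwind remainder involving $(\bfu_\edge^{n+1}\cdot\bfn_{K,\edge})^-$. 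Summing these contributions over $\edge\in\edges(K)$ and dividing by $|K|$ exactly reconstructs $\dive(\psi_\gamma(\rho^{n+1})\bfu^{n+1})_K+\eos(\rho_K^{n+1})\,\dive(\bfu^{n+1})_K$ plus the second (non-negative) contribution to $R_K^{n+1}$; combining with the time derivative step gives \eqref{eq:i_renorm}.

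\textbf{Main obstacle.} No deep difficulty is expected: once the identity $\rho\psi_\gamma'-\psi_\gamma=\eos$ is identified, everything reduces to two Taylor expansions and a careful tracking of signs in the upwind splitting. The only place one must be cautious is precisely this sign tracking, so that the face remainder emerges with the factor $(\bfu_\edge\cdot\bfn_{K,\edge})^-$ and not its opposite; the final non-negativity of $R_K^{n+1}$ is then immediate from $\psi_\gamma''=\eos'/\rho>0$.
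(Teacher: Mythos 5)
Your proposal is correct and follows essentially the same route as the paper: the paper proves \eqref{eq:i_renorm} by citing the standard computation of \cite[Lemma 3.2]{her-14-ons}, which is exactly the argument you reconstruct (multiply the mass balance by $\psi_\gamma'(\rho_K^{n+1})$, use the identity $\rho\,\psi_\gamma'(\rho)-\psi_\gamma(\rho)=\eos(\rho)$, and absorb the Taylor remainders of the time increment and of the upwind flux into $R_K^{n+1}$), and it then obtains \eqref{eq:i_renorm:Pi} by adding $-\psi_\gamma'(1)$ times the mass equation, just as you do. Your sign bookkeeping for the $(\bfu_\edge^{n+1}\cdot\bfn_{K,\edge})^-$ factor and the two Taylor expansions are all consistent with the stated form of $R_K^{n+1}$.
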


\begin{proof}
The proof of \eqref{eq:i_renorm} is the same as that of \cite[Lemma 3.2]{her-14-ons}.
The equality  \eqref{eq:i_renorm:Pi} is obtained after multiplying the discrete mass equation \eqref{eq:isch_mass} by $-\psi_\gamma'(1)$ and summing with \eqref{eq:i_renorm}.
\end{proof}

\medskip
As a consequence of Lemmas \ref{lmm:i_ke} and \ref{lem:i_renorm}, the implicit scheme satisfies a discrete counterpart of the local-in-time entropy identity \eqref{entrop_2}.

\begin{lemma}[Local-in-time discrete entropy inequality, existence of a solution] \label{lmm:loc_ener_i}
Let $\ma>0$ and assume that the initial density $\rho_0^\ma$ is positive. Then, there exists a solution $(\rho^n,\bfu^n)_{0\leq n\leq N}$ to the scheme \eqref{eq:implicit_scheme}, such that $\rho^n >0$ for $0\leq n\leq N$, and the following inequality holds for $0\leq n\leq N-1$:
\begin{equation} \label{i_loc_ener}
\frac 1 2  \sum_{\edge \in \edgesint}
|D_\edge| \Bigl( \rho^{n+1}_\Ds |\bfu^{n+1}_\edge|^2-\rho^n_\Ds |\bfu_\edge^n|^2 \Bigr)
+ \frac{1}{\ma^2}\sum_{K \in \mesh} |K|\, (\Pi_\gamma(\rho^{n+1}_K) - \Pi_\gamma(\rho^n_K))
 + \ \mu\,  \delta t\ \norm{\bfu^{n+1}}_{1,\disc}^2
 + \mathcal{R}^{n+1} \leq 0,
\end{equation}
where $  \mathcal{R}^{n+1} = \sum_{\edge \in \edgesint} R^{n+1}_\edge + {\ma}^{-2}\sum_{K \in \mesh} R_K^{n+1}\geq 0$.
\end{lemma}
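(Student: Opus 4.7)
The statement splits naturally into two ingredients: the entropy inequality \eqref{i_loc_ener} (which holds for every solution of the scheme) and the existence/positivity result (whose proof rests on the uniform bounds provided by that very inequality, applied along a homotopy).

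For the inequality, my plan is to multiply the dual kinetic energy identity \eqref{eq:i_ke} by $|D_\edge|$ and sum over $\edge\in\edgesint$, then add $\ma^{-2}$ times the sum over $K\in\mesh$ of \eqref{eq:i_renorm:Pi} weighted by $|K|$. Four cancellations carry the computation. First, the dual convection contribution $\frac{1}{2}\sum_{\edge\in\edgesint}\sum_{\edged=\edgeedgeprime}\fluxd(\rho^{n+1},\bfu^{n+1})\,\bfu_\edge^{n+1}\cdot\bfu_{\edge'}^{n+1}$ vanishes by the conservativity assumption (H2): each interior dual face is met twice with opposite fluxes, so the pairing antisymmetrizes. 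Second, the primal transport term in \eqref{eq:i_renorm:Pi} sums to a boundary flux that vanishes by the impermeability condition. Third, the pressure gradient term inherited from the kinetic energy balance and the $\eos(\rho^{n+1})\,\dive(\bfu^{n+1})$ term from the renormalization cancel exactly via the discrete grad--div duality \eqref{eq:grad-div}. Fourth, Lemma \ref{lmm:diff_coerc} furnishes $\sum_{\edge\in\edgesint}|D_\edge|\,\bfu_\edge^{n+1}\cdot(-\divv(\bftau(\bfu^{n+1}))_\edge)\ge\mu\,\norm{\bfu^{n+1}}_\brok^2$. The remainders $R_\edge^{n+1}$ are manifestly non-negative, and $R_K^{n+1}\ge0$ follows from the convexity of $\psi_\gamma$; gathering these bounds yields exactly \eqref{i_loc_ener}.

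For existence and positivity of $\rho^{n+1}$, I would run a topological degree argument in the finite-dimensional space of discrete unknowns. A preliminary observation is that, for fixed $\bfu^{n+1}\in\xbfH_{\edges,0}(\Omega)$, the upwinded mass balance \eqref{eq:isch_mass} is a linear system in $\rho^{n+1}$ with an $M$-matrix structure (strictly dominant positive diagonal, non-positive off-diagonal entries), so that $\rho^n>0$ produces a unique $\rho^{n+1}>0$ depending continuously on $\bfu^{n+1}$. Then I would set up a homotopy $(H_\theta)_{\theta\in[0,1]}$ connecting \eqref{eq:implicit_scheme} to a linear, uniquely solvable problem at $\theta=0$ (for instance by freezing the convection and replacing $\eos$ by a suitable linear function), arranging it so that the four cancellations above are preserved along the path. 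The $\theta$-uniform bound on $\norm{\bfu^{n+1}}_\brok$ and on $\sum_K|K|\,\Pi_\gamma(\rho_K^{n+1})$ that these deliver, together with the $M$-matrix guarantee of strict positivity of $\rho^{n+1}$, provide the a priori estimate needed to invoke Leray--Schauder and produce a solution at $\theta=1$.

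The delicate point is the design of the homotopy: it must keep the upwinded mass balance unchanged so that $\rho^{n+1}$ stays uniformly bounded away from zero (hence $\eos(\rho^{n+1})$, $\psi_\gamma(\rho^{n+1})$ and $\rho_\Ds^{n+1}$ remain well-defined and smooth along the path), while simultaneously preserving the grad--div duality, the (H2) antisymmetry, the upwind positivity and the diffusion coercivity that together produce the energy estimate at every $\theta$. Once this is arranged, none of the remaining steps is harder than the algebraic identities already collected in Lemmas \ref{lmm:i_ke} and \ref{lem:i_renorm}.
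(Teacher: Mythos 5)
Your derivation of the inequality \eqref{i_loc_ener} is exactly the paper's: sum \eqref{eq:i_ke} over $\edge\in\edgesint$ and $\ma^{-2}\times$\eqref{eq:i_renorm:Pi} over $K\in\mesh$ (with the measures $|D_\edge|$, $|K|$ and a factor $\delta t$), and let the conservativity of the dual fluxes, the vanishing of the boundary mass fluxes, the grad--div duality \eqref{eq:grad-div} and the coercivity of Lemma \ref{lmm:diff_coerc} do the work; the sign of the remainders follows from the positivity of the density and the convexity of $\psi_\gamma$, as you say. For the existence part you diverge slightly in machinery: the paper invokes the Brouwer fixed point theorem applied to the composed map $\bfu\mapsto\rho\mapsto p\mapsto\bfu$ (adapting \cite[Proposition 5.2]{eym-10-conv}), where the $\xL^1$ bound on $\rho$ from conservativity plus positivity, a norm-equivalence argument for the pressure, and the kinetic energy inequality confine the iteration to a bounded convex subset of a finite-dimensional space; you instead propose a Leray--Schauder/degree argument along a structure-preserving homotopy. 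Both rest on the same a priori estimates and your observation that the upwinded mass balance is an $M$-matrix system (this is precisely the positivity mechanism the paper cites from \cite[Lemma 2.1]{gas-11-dis}), so your route is viable; the paper's composed-map formulation simply sidesteps the need to design a homotopy that simultaneously preserves the upwind positivity, the duality \eqref{eq:grad-div}, the antisymmetry (H2) and the coercivity, which is the one step you leave unconstructed. This is a presentational difference rather than a gap.
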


\begin{proof}
The positivity of the density is a consequence of the properties of the upwind choice \eqref{eq:def_FKedge} for $\rho$ \cite[Lemma 2.1]{gas-11-dis}.
Let us then sum equation \eqref{eq:i_ke} over the faces $\edge \in \edgesint$, ${\ma}^{-2}\times$ \eqref{eq:i_renorm:Pi} over $K \in \mesh$, and, finally, the two obtained relations.
Since the discrete gradient and divergence operators are dual with respect to the $\xL^2$ inner product (see \eqref{eq:grad-div}), noting that the conservative dual fluxes vanish in the summation and that the diffusion term is coercive (see Lemma \ref{lmm:diff_coerc}), we get \eqref{i_loc_ener}.

Given discrete density and velocity fields $(\rho^n,\bfu^n)$, the existence of a solution $(\rho^{n+1},\bfu^{n+1})$ to the implicit scheme at the time step $n+1$ may be inferred by the Brouwer fixed point theorem, by an easy adaptation of the proof of \cite[Proposition 5.2]{eym-10-conv}.
This proof relies on the following set of mesh-dependent estimates: the conservativity of the mass balance discretization, together with the fact that the density is positive, yields an estimate for $\rho$ in the $\xL^1$-norm, and so, by a norm equivalence argument, of the pressure in any norm; then, for a given density $\rho$, the discrete global kinetic energy inequality (\ie\ \eqref{eq:i_ke} summed over the control volumes) provides a control on the velocity.
Therefore, computing $\rho$ from the mass balance for fixed $\bfu$, then $p$ from $\rho$ by the equation of state $\eos(\rho)$, and finally $\bfu$ from the momentum balance equation with fixed $\rho$, yields an iteration in a bounded convex subset of a finite dimensional space.
\end{proof}

\medskip
We may now prove that the solution of the implicit scheme satisfies a discrete counterpart of the global estimate \eqref{estimate_Pi}.

\begin{lemma}[Global discrete entropy inequality]
Let $\ma>0$ and assume that the initial data $(\rho_0^\ma,\bfu_0^\ma)$ is ill-prepared in the sense of \eqref{eq:u0_rho0_ip}. By Lemma \ref{lmm:loc_ener_i}, there exists a solution $(\rho^n,\bfu^n)_{0\leq n\leq N}$ to the scheme \eqref{eq:implicit_scheme}. In addition, there exists $C>0$ independent of $\ma$ such that, for $\ma$ small enough and for all $1\leq n\leq N$:
\begin{equation}
\label{eq:sch_stab}
 \frac 1 2  \sum_{\edge\in \edgesint} |D_\edge| \ \rho^n_\Ds\ |\bfu^n_\edge|^2
+\mu\  \sum_{k=1}^n \delta t\ \norm{\bfu^k}_{1,\disc}^2
+\frac 1 {\ma^2} \sum_{K\in \mesh}|K| \, \Pi_\gamma(\rho^n_K) \leq C.
\end{equation}
\end{lemma}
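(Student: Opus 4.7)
The plan is to obtain the global estimate by telescoping the local-in-time entropy inequality \eqref{i_loc_ener} provided by Lemma \ref{lmm:loc_ener_i}, and then to control the initial-data contribution uniformly in $\ma$ using the ill-preparedness assumption \eqref{eq:u0_rho0_ip} together with the two-sided bounds on $\Pi_\gamma$ from Lemma \ref{lem:pi}.

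First, I would sum \eqref{i_loc_ener} for $k=0,\dots,n-1$. The kinetic energy term and the $\Pi_\gamma$ term both telescope, the diffusion term accumulates to $\mu\sum_{k=1}^n \delta t\,\norm{\bfu^k}_\brok^2$, and the residual contribution $\sum_{k=0}^{n-1} \mathcal{R}^{k+1}$ is non-negative (as noted in Lemma \ref{lem:i_renorm}) and can therefore be discarded. This yields
\[
\frac 1 2 \sum_{\edge\in \edgesint} |D_\edge|\, \rho^n_\Ds\,|\bfu^n_\edge|^2
+ \mu \sum_{k=1}^n \delta t\,\norm{\bfu^k}_\brok^2
+ \frac 1 {\ma^2} \sum_{K\in \mesh} |K|\,\Pi_\gamma(\rho^n_K)
\leq \frac 1 2 \sum_{\edge\in \edgesint} |D_\edge|\, \rho^0_\Ds\,|\bfu^0_\edge|^2
+ \frac 1 {\ma^2} \sum_{K\in \mesh} |K|\,\Pi_\gamma(\rho^0_K).
\]

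Next I would show that each term on the right-hand side is bounded independently of $\ma$. For the initial kinetic energy: the cell averages \eqref{eq:inicond_is} and Jensen's inequality give $\sum_\edge |D_\edge|\,|\bfu^0_\edge|^2 \leq \norm{\bfu_0^\ma}_{\xL^2(\Omega)^d}^2$, which is uniformly bounded by \eqref{eq:u0_rho0_ip}; moreover, since $\norm{\rho_0^\ma-1}_{\xL^\infty(\Omega)}\leq C\ma$, the discrete dual density $\rho^0_\Ds$ satisfies $\rho^0_\Ds \leq 1 + C\ma \leq 2$ for $\ma$ small enough. For the $\Pi_\gamma$ contribution: the cell average gives $|\rho^0_K - 1| \leq \norm{\rho_0^\ma - 1}_{\xL^\infty(\Omega)} \leq C\ma$, so for $\ma$ small enough $\rho^0_K \in (0,2)$, which is precisely the range in which the upper bound \eqref{upperPi} from Lemma \ref{lem:pi} applies. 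Therefore $\Pi_\gamma(\rho^0_K) \leq C_\gamma\,|\rho^0_K - 1|^2 \leq C_\gamma\, C^2\,\ma^2$, and summing over $K\in\mesh$ and dividing by $\ma^2$ yields $\ma^{-2}\sum_K |K|\,\Pi_\gamma(\rho^0_K) \leq C_\gamma\, C^2\,|\Omega|$.

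There is no significant obstacle here; the local-in-time inequality of Lemma \ref{lmm:loc_ener_i} has already done the heavy lifting by encoding the discrete analogue of the entropy identity \eqref{entrop_2}. The only point that requires a little care is the initial $\Pi_\gamma$ bound: one must use the quadratic upper bound \eqref{upperPi}, which only holds for $\rho \in (0,2)$, and this is exactly where the ill-preparedness assumption $\norm{\rho_0^\ma-1}_{\xL^\infty(\Omega)} \leq C\ma$ (and not just $C\ma^{2/\gamma}$) is used to guarantee that $\rho^0_K$ lies in that range for $\ma$ small enough, thereby transforming the factor $\ma^{-2}$ into a harmless multiplicative constant.
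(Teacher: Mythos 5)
Your proof is correct and follows essentially the same route as the paper: telescoping the local-in-time entropy inequality \eqref{i_loc_ener}, discarding the non-negative remainder, bounding the initial kinetic energy via Jensen's inequality and the bound $\rho^0_\Ds\leq 2$, and controlling the initial $\Pi_\gamma$ term through the quadratic upper bound \eqref{upperPi} combined with $|\rho_K^0-1|\leq C\ma$. Your added remark that the ill-preparedness rate $\ma$ (rather than $\ma^{2/\gamma}$) is exactly what makes the factor $\ma^{-2}$ harmless is a correct and slightly more explicit rendering of the paper's final sentence.
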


\begin{proof}
Multiplying equation \eqref{i_loc_ener} by $\delta t$ and summing over the time steps yields for $1\leq n\leq N$:
\begin{multline}\label{eq:imp_stab}
\frac 1 2  \sum_{\edge\in \edgesint} |D_\edge| \ \rho^n_\Ds\ |\bfu^n_\edge|^2
+\mu\  \sum_{k=1}^n \delta t\ \norm{\bfu^k}_{1,\disc}^2
+\frac 1 {\ma^2} \sum_{K\in \mesh}|K| \, \Pi_\gamma(\rho^n_K)+\mathcal{R}^n
\\
\leq \ 
\frac 1 2  \sum_{\edge\in \edgesint} |D_\edge| \ \rho^0_\Ds\ |\bfu^0_\edge|^2
+\frac 1 {\ma^2} \sum_{K\in \mesh}|K| \ \Pi_\gamma(\rho^0_K),
\end{multline}
with $\mathcal{R}^n= \sum_{k=0}^{n-1} \big (\sum_{\edge \in \edgesint} R^{k+1}_\edge + \ma^{-2}\sum_{K \in \mesh} R_K^{k+1} \big ) \geq 0$.
\smallskip
 
Let us prove that the right hand side of \eqref{eq:imp_stab} is uniformly bounded for all $\ma$ small enough. By \eqref{eq:u0_rho0_ip} and \eqref{eq:inicond_is}, for $\ma$ small enough, one has $\rho_K^0 \leq 2$ for all $K\in\mesh$ and therefore  $\rho^0_\Ds\leq 2$ for all $\edge\in\edgesint$, since the dual densities are convex combinations of the primal density unknowns. Hence, one has:
\[
 \frac 1 2 \sum_{\edge\in \edgesint} |D_\edge| \ \rho^0_\Ds\ |\bfu^0_\edge|^2 \leq
\sum_{\edge\in \edgesint} |D_\edge|^{-1} \Big | \int_{D_\edge} \bfu_0^\ma \Big |^2 \leq \norm{\bfu_0^\ma}_{\xL^2(\Omega)^d}^2,
\]
which by \eqref{eq:u0_rho0_ip} is uniformly bounded with respect to $\ma$. Then, using the upper bound on $\Pi_\gamma(\rho)$ for small values of $\rho$ (see Lemma \ref{lem:pi}), we can see that the second term of the right hand side of \eqref{eq:imp_stab} is uniformly bounded with respect to $\ma$, for $\ma$ small enough.
\end{proof}

\begin{lemma}[Control of the pressure] \label{lmm:icontrole:p}
Let $\ma>0$ and assume that the initial density $\rho_0^\ma$ is positive and that the initial data $(\rho_0^\ma,\bfu_0^\ma)$ is ill-prepared in the sense of \eqref{eq:u0_rho0_ip}. Then, there exists a solution $(\rho^n,\bfu^n)_{0\leq n\leq N}$ to the scheme \eqref{eq:implicit_scheme}. Let  $p^n=\eos(\rho^n)$ and define $\deltap^n = \lbrace \deltap_K^n,\,K\in\mesh \rbrace $ where $\deltap_K^n = (p_K^n-m(p^n))/\ma^2$ with $m(p^n)$ the mean value of $p^n$ over $\Omega$ (\emph{i.e.} $m(p^n) = |\Omega|^{-1}\sum_{K\in\mesh} |K|\ p_K^n$). Then, one has, for all $1\leq n \leq N$:
\begin{equation}
\label{eq:icontrole:p}
 \norm{\deltap^n} \leq C_{\disc,\delta t},
\end{equation}
where  the real number $C_{\disc,\delta t}$ depends on the mesh and the time step but not on $\ma$, and $\norm{\cdot}$ stands for any norm on the space of discrete functions.
\end{lemma}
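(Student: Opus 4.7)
The plan is to use the inf-sup condition (Lemma \ref{lmm:inf-sup}) to control $\deltap^{n+1}$ through the discrete gradient of the pressure, which in turn is extracted from the momentum balance \eqref{eq:isch_mom} and bounded by the other terms in the equation.

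First I would observe that the definition \eqref{eq:def_grad_p} of the discrete pressure gradient only involves differences of cell values, so it is invariant under addition of a constant. Consequently
\[
(\gradi p^{n+1})_\edge = \ma^2 \,(\gradi \deltap^{n+1})_\edge, \qquad \forall \edge \in \edgesint,
\]
and the momentum balance \eqref{eq:isch_mom} may be rewritten as
\[
(\gradi \deltap^{n+1})_\edge = -\dfrac{1}{\delta t}\bigl(\rho^{n+1}_\Ds \bfu^{n+1}_\edge - \rho^n_\Ds \bfu_\edge^n\bigr) - \divv(\rho^{n+1}\bfu^{n+1}\otimes\bfu^{n+1})_\edge + \divv(\bftau(\bfu^{n+1}))_\edge.
\]

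Next, since $\deltap^{n+1}$ has zero mean by construction, Lemma \ref{lmm:inf-sup} furnishes a $\bfv \in \xbfH_{\edges,0}$ with $\norm{\bfv}_\brok = 1$ such that
\[
\beta \norm{\deltap^{n+1}}_{\xL^2(\Omega)} \leq \sum_{K \in \mesh} |K|\, \deltap_K^{n+1}\,\dive(\bfv)_K.
\]
Taking the scalar product of the rewritten momentum balance with $|D_\edge|\, \bfv_\edge$, summing over $\edge \in \edgesint$, and invoking the discrete duality relation \eqref{eq:grad-div} to turn the left-hand side into $-\sum_K |K|\,\deltap_K^{n+1}\,\dive(\bfv)_K$, I obtain
\[
\beta \norm{\deltap^{n+1}}_{\xL^2(\Omega)} \leq \sum_{\edge \in \edgesint} |D_\edge|\,\bfv_\edge \cdot \Bigl[\tfrac{1}{\delta t}(\rho^{n+1}_\Ds\bfu^{n+1}_\edge - \rho^n_\Ds\bfu^n_\edge) + \divv(\rho^{n+1}\bfu^{n+1}\otimes\bfu^{n+1})_\edge - \divv(\bftau(\bfu^{n+1}))_\edge\Bigr].
\]

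It remains to bound the right-hand side by a constant $C_{\disc,\delta t}$ independent of $\ma$. For this, I would exploit the global entropy estimate \eqref{eq:sch_stab}: it controls $\bfu^k$ in the $\norm{\cdot}_\brok$-norm (and hence, by equivalence of norms on the fixed finite-dimensional space $\xbfH_{\edges,0}(\Omega)$, in any norm) uniformly in $\ma$; the dual densities $\rho^{n+1}_\Ds$ are convex combinations of the $\rho_K^{n+1}$, which are bounded on a fixed mesh by conservation of total mass (as pointed out in Remark \ref{rem-baro}); and $\norm{\bfv}_\brok = 1$ gives a uniform bound on $\bfv$ in any norm. A term-by-term estimation using Cauchy--Schwarz and the discrete norm equivalence yields the desired $\xL^2$ bound on $\deltap^{n+1}$, which transfers to any chosen norm by one more application of norm equivalence.

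The only real subtlety is the nonlinear convection term $\divv(\rho^{n+1}\bfu^{n+1}\otimes\bfu^{n+1})_\edge$: I must check that assumption (H3) bounding the dual fluxes by the primal ones, together with the $\xL^\infty$ control of $\rho$ and the finite-dimensional control of $\bfu$, still produces an $\ma$-independent bound; this is where the hypothesis of a fixed mesh and fixed time step is essential, since the constant $C_{\disc,\delta t}$ will involve inverse powers of the smallest cell measure and of $\delta t$. None of these constants blow up as $\ma \to 0$, which is all that is required.
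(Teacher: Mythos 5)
Your proposal is correct and follows essentially the same route as the paper's proof: the \emph{inf-sup} condition of Lemma \ref{lmm:inf-sup} supplies a test velocity $\bfv$ with $\norm{\bfv}_\brok=1$, the momentum balance is tested against $|D_\edge|\,\bfv_\edge$ and the gradient--divergence duality \eqref{eq:grad-div} converts the pressure term into the quantity controlled by the inf-sup bound, and the remaining (time-derivative, convection, diffusion) terms are bounded independently of $\ma$ via the global entropy estimate \eqref{eq:sch_stab}, the $\xL^1$ control of the density from mass conservativity, and norm equivalence on the fixed finite-dimensional spaces. Your explicit remark that $(\gradi p^{n+1})_\edge=\ma^2(\gradi \deltap^{n+1})_\edge$ is a welcome clarification of a point the paper leaves implicit.
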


\begin{proof}
The staggered discretization satisfies the \textit{inf-sup} condition (see Lemma \ref{lmm:inf-sup}), which implies that there exists a positive real number $\beta$, depending only on $\Omega$ and on the mesh, such that for $\deltap^n= (p^n-m(p^n))/\ma^2$, there exists a discrete velocity field $\bfv$, with $\norm{\bfv}_\brok=1$ and satisfying:
\[
\beta\, \norm{\deltap^n}_{L^2(\Omega)} \leq \sum_{K\in\mesh} |K|\ p_K^n\,\dive (\bfv )_K.
\]
Hence by the gradient-divergence duality property \eqref{eq:grad-div}, taking the scalar product of \eqref{eq:isch_mom} with $|D_\edge|\,\bfv_\edge$ and summing over $\edge$ in $\edgesint$, we get $\beta\norm{\deltap^n}_{L^2(\Omega)} \leq T^n_1+T^n_2+T^n_3$ with:
\[ \begin{aligned} &
T^n_1 =\dfrac 1 {\delta t}\,\sum_{\edge\in\edgesint}\,|D_\edge|\,(\rho^n_\Ds \bfu^n_\edge-\rho^{n-1}_\Ds \bfu_\edge^{n-1})\cdot\bfv_\edge,
\\ &
T^n_2 =\sum_{\edge\in\edgesint}\,|D_\edge| \, \divv(\rho^n \bfu^n \otimes \bfu^n)_\edge\cdot \bfv_\edge,
\\ &
T^n_3 =- \sum_{\edge\in\edgesint}\,|D_\edge| \, \divv(\bftau(\bfu^n))_\edge \cdot \bfv_\edge.
\end{aligned} \]
The proof of \eqref{eq:icontrole:p} follows if one is able to control each of these terms $T^n_1$, $T^n_2$ and $T^n_3$ independently of $\ma$. Since the mesh and the time step $\delta t$ are fixed, a norm equivalence argument in a finite dimensional space yields the existence of a positive function $C_{\disc,\delta t}$ depending on the discretization, which is non-decreasing in each of its variables, such that: 
\[
|T^n_1+T^n_2+T^n_3| \leq C_{\disc,\delta t} \,(\norm{\rho^n}_{\xL^1},\norm{\rho^{n-1}}_{\xL^1},\norm{\bfu^n}_\brok, \norm{\bfu^{n-1}}_\brok, \norm{\bfv}_\brok).
\]
We have  $\norm{\bfv}_\brok=1$ and by the estimate \eqref{eq:sch_stab}, $\norm{\bfu^n}_\brok$ and $\norm{\bfu^{n-1}}_\brok$ are controlled independently of $\ma$. In addition $\rho^n$ and $\rho^{n-1}$ are controlled in $\xL^1$ by conservativity of the discrete mass balance equation.
\end{proof}
%
%
\subsection{Incompressible limit of the implicit scheme}

We may now state the main result of this section which is the convergence, up to a subsequence, of the solution to the compressible scheme \eqref{eq:implicit_scheme} towards the solution of an implicit \emph{inf-sup} stable incompressible scheme when the Mach number tends to zero.

\begin{theorem}[Asymptotic behavior of the implicit scheme]
\label{Theorem:imp}
Let $(\ma\exm)_{m\in \xN}$ be a sequence of positive real numbers tending to zero, and let $(\rho\exm,\bfu\exm)_{m \in \xN}$ be a corresponding sequence of solutions of the scheme \eqref{eq:implicit_scheme}.
Let us assume that the initial data $(\rho_0^{\ma\exm},\bfu_0^{\ma\exm})$ is ill-prepared, \ie\ satisfies Relation \eqref{eq:u0_rho0_ip} for $m \in \xN$.
Then the sequence $(\rho\exm)_{m \in \xN}$ tends to the constant function $\rho =1$ when $m$ tends to $+ \infty$ in $\xL^\infty((0,T),\xL^\gamma(\Omega))$. Moreover, for all $q\in[1,\min(2,\gamma)]$, there exists $C>0$ such that:
\[
   \norm{\rho\exm-1}_{\xL^\infty((0,T);\xL^q(\Omega))} \leq C\ma\exm, \qquad \text{for $m$ large enough}.
\]

\medskip
In addition, the sequences $(\bfu\exm)_{m \in \xN}$ and $(\deltap\exm)_{m \in \xN}$ are bounded in any discrete norm which may depend on the fixed discretization.  If a subsequence of $(\bfu\exm,\deltap\exm)_{m \in \xN}$ tends, in any discrete norm, to a limit $(\bfu,\deltap)$, then $(\bfu,\deltap)$ is a solution to the standard (Rannacher-Turek or Crouzeix-Raviart) implicit scheme for the incompressible Navier-Stokes equations: 

\medskip
Knowing  $\deltap^n\in\xL_\mesh(\Omega)$ and $\bfu^n\in\xbfH_{\edges,0}(\Omega)$, solve for $\deltap^{n+1}\in\xL_\mesh(\Omega)$ and $\bfu^{n+1}\in\xbfH_{\edges,0}(\Omega)$:
\begin{subequations}\label{eq:iinc_scheme}
\begin{align}
\label{eq:iisch_mass}  & 
\dive(\bfu^{n+1})_K = 0, & \forall K \in \mesh,
\\[2ex]  \label{eq:iisch_mom} & 
\dfrac 1 {\delta t} \bigl( \bfu^{n+1}_\edge - \bfu_\edge^n \bigr)
+ \divv( \bfu^{n+1}\otimes  \bfu^{n+1})_\edge
-\divv (\bftau(\bfu^{n+1}))_\edge
+(\gradi {\deltap}^{n+1})_\edge
=0, & \forall \edge \in \edgesint.
\end{align}
\end{subequations}
At the limit, the scheme uses as initial condition only the $\xL^2$-projection of $ \bfu_0$ (the limit of $\bfu_0^\ma$) on the space $E_\disc(\Omega)$ of the discrete divergence-free functions:
$E_\disc(\Omega)=\lbrace \bfv \in \xbfH_{\edges,0}(\Omega),\, \dive(\bfv)_K=0,\, \forall K\in\mesh\rbrace$.
\end{theorem}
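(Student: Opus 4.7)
My plan is to mimic at the discrete level the continuous analysis of Section \ref{sec:cont}. The proof is considerably simpler than in the continuous setting because, for a fixed mesh and time step, all discrete norms on the (finite-dimensional) space of unknowns are equivalent and compactness is automatic; in particular, there is no need for any delicate treatment of the nonlinear convection term.

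I would first derive the convergence and rate for the density from the global entropy inequality \eqref{eq:sch_stab}. The ill-preparation assumption \eqref{eq:u0_rho0_ip} combined with the upper bound \eqref{upperPi} on $\Pi_\gamma$ makes the right-hand side of \eqref{eq:sch_stab} uniformly bounded, so that $(\ma\exm)^{-2}\sum_{K\in\mesh}|K|\,\Pi_\gamma(\rho_K\exm)$ is bounded uniformly in $m$ and $n$. The lower bounds of Lemma \ref{lem:pi}, applied exactly as in the proofs of Propositions \ref{conve_rho_continuous_0} and \ref{conve_rho_continuous}, then yield $\rho\exm \to 1$ in $\xL^\infty((0,T);\xL^\gamma(\Omega))$ and the rate $\|\rho\exm-1\|_{\xL^\infty((0,T);\xL^q(\Omega))} \le C\,\ma\exm$ for $q \in [1,\min(2,\gamma)]$.

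Next, \eqref{eq:sch_stab} provides a uniform control of $\|\bfu\exm\|_{\brok}$ at every time step, and Lemma \ref{lmm:icontrole:p} gives an $m$-uniform bound on $\deltap\exm$ in any norm. A successive extraction over the $N+1$ time levels produces a subsequence for which $(\bfu\exm,\deltap\exm) \to (\bfu,\deltap)$ in every discrete norm. Passing to the limit in \eqref{eq:implicit_scheme} is then routine: in the mass balance \eqref{eq:isch_mass}, both $\rho_K\exm$ at $t_n$ and $t_{n+1}$ converge to $1$, so the time-difference tends to zero and $\dive(\rho\exm\bfu\exm)_K \to \dive(\bfu)_K$, which yields \eqref{eq:iisch_mass}; in the momentum balance \eqref{eq:isch_mom}, by \eqref{eq:def_grad_p} subtracting the space-constant $m(p\exm)$ does not alter the discrete gradient, hence $(\ma\exm)^{-2}(\gradi p\exm)_\edge = (\gradi \deltap\exm)_\edge \to (\gradi \deltap)_\edge$; the dual density $\rho_\Ds\exm$, a convex combination of primal values, also tends to $1$; and the multilinear convection and diffusion terms pass to the limit by continuity on a finite-dimensional space, giving \eqref{eq:iisch_mom}.

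The only genuine subtlety concerns the initial condition. By \eqref{eq:inicond_is}, the discrete initial velocity $\bfu\exm|_{n=0}$ converges to the $\xL^2$-projection $\bar\bfu^0$ of the $\xL^2$-weak limit $\bar\bfu_0$ onto $\xbfH_{\edges,0}(\Omega)$, and $\bar\bfu^0$ is not in $E_\disc(\Omega)$ in general. To reconcile this with the incompressibility required of $\bfu^1$, I would invoke the inf-sup stability (Lemma \ref{lmm:inf-sup}), which implies that the orthogonal complement of $E_\disc(\Omega)$ in $\xbfH_{\edges,0}(\Omega)$ coincides with the range of the discrete gradient operator restricted to mean-zero functions. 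One may then write $\bar\bfu^0 = \bfw + \gradi\phi$ with $\bfw \in E_\disc(\Omega)$ the orthogonal projection of $\bar\bfu^0$ on $E_\disc(\Omega)$ and $\phi\in \xL_\mesh(\Omega)$ of zero mean; the change of unknown $\deltap^1 \leftarrow \deltap^1 - \phi/\delta t$ in the first-step momentum equation absorbs $\gradi\phi$ into the pressure, so that $\bfu^1$, and therefore the whole subsequent trajectory, depends only on $\bfw$. This is precisely what the statement asserts by saying that the limit scheme uses as initial condition only the $\xL^2$-projection of $\bar\bfu_0$ on $E_\disc(\Omega)$.
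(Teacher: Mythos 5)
Your proposal is correct and follows essentially the same route as the paper: density convergence and rate from the lower bounds of Lemma \ref{lem:pi} combined with the global entropy estimate \eqref{eq:sch_stab}, uniform bounds on $\bfu\exm$ from \eqref{eq:sch_stab} and on $\deltap\exm$ from Lemma \ref{lmm:icontrole:p}, extraction by finite-dimensionality, and a cell-by-cell passage to the limit in \eqref{eq:implicit_scheme}. The only place where you deviate is the treatment of the initial condition: the paper tests the first-step momentum equation against discrete divergence-free functions $\bfvphi\in E_\disc(\Omega)$ to obtain \eqref{imp:stokes} and observes that $\bigl(\bfu^0-\mathcal{P}(\bfu^0),\bfvphi\bigr)_{\xL^2}=0$, whereas you decompose $\bfu^0$ orthogonally as $\bfw+\gradi\phi$ (using the duality \eqref{eq:grad-div} and the \emph{inf-sup} condition of Lemma \ref{lmm:inf-sup} to identify $E_\disc(\Omega)^\perp$ with the range of the discrete gradient on zero-mean pressures) and absorb $\gradi\phi$ into the pressure by the shift $\deltap^1\leftarrow\deltap^1-\phi/\delta t$. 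The two arguments are dual to one another and both ultimately rest on \eqref{eq:grad-div} and the \emph{inf-sup} stability; yours has the small advantage of exhibiting explicitly the modified pressure in the first step, while the paper's avoids having to establish the reverse inclusion $E_\disc(\Omega)^\perp\subseteq\mathrm{range}(\gradi)$.
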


\begin{proof}
The proof of the convergence of $(\rho\exm)_{m \in \xN}$ towards $ \rho =1$ when $m$ tends to $+ \infty$ is the same as in the continuous setting. It follows from the combination of the lower bounds on $\Pi_\gamma(\rho)$ proven in Lemma \ref{lem:pi} and the discrete global entropy estimate \eqref{eq:sch_stab}.

Using again estimate \eqref{eq:sch_stab}, one can see that the sequence $(\bfu\exm)_{m \in \xN}$ is bounded in any discrete norm and the same holds for the sequence $(\deltap\exm)_{m \in \xN}$ by Lemma \ref{lmm:icontrole:p}.
By the Bolzano-Weierstrass Theorem and a norm equivalence argument for finite dimensional spaces, there exists a subsequence of  $(\bfu\exm,\deltap\exm)_{m \in \xN}$ which tends, in any discrete norm, to a limit $( \bfu,\deltap)$.
Passing to the limit cell-by-cell in the implicit scheme \eqref{eq:implicit_scheme}, one obtains that $(\bfu, \deltap)$ is a solution of the standard implicit scheme \eqref{eq:iinc_scheme} for the incompressible Navier-Stokes equations.

If $\bfu^1$ is a solution of the first time-step of \eqref{eq:iinc_scheme} then $\bfu^1\in E_\disc(\Omega)$, and for all $\bfvphi\in E_\disc(\Omega)$, taking the scalar product of \eqref{eq:iisch_mom} with $|D_\edge|\bfvphi_\edge$ and summing over $\edge\in\edgesint$, one gets that $\bfu^1$ satisfies:
\begin{equation} \label{imp:stokes}
\frac{1}{\delta t} \bigl ( \bfu^1,\bfvphi \bigr )_{\xL^2} 
+ \sum_{\edge\in\edgesint} |D_\edge|\,\divv(\rho^{1}\bfu^1\otimes\bfu^1)_\edge\cdot\bfvphi_\edge
+ \sum_{K\in\mesh} \int_K \bftau(\hat{ \bfu}^1):\bftau(\hat \bfvphi)= \frac{1}{\delta t} \bigl ( \bfu^0,\bfvphi \bigr )_{\xL^2}. 
\end{equation}
This system is known to be the part of the algebraic system associated with one time step of the scheme determining the velocity (even if the uniqueness of this unknown is guaranteed only for small time steps), in the sense that the velocity may be computed from \eqref{imp:stokes}, the remaining equations yielding the pressure.
Observing that in the right hand side of \eqref{imp:stokes}, $\bfu^0$ can be replaced by its $\xL^2$-projection $\mathcal{P}(\bfu^0)$ onto $E_\disc(\Omega)$ (since $\bigl (\bfu^0-\mathcal{P}(\bfu^0),\bfvphi \bigr )_{\xL^2}=0$ for all $\bfvphi\in E_\disc(\Omega)$ by definition of $\mathcal{P}(\bfu^0)$), we obtain that only the divergence-free part of the initial velocity is seen by the implicit scheme.
\end{proof}

\begin{remark}[Control of the velocity and extension to the Euler equations]
Estimate \eqref{eq:sch_stab} provides a control on the sequence of discrete velocities $(\bfu\exm)_{m\in\xN}$ in a discrete $\xL^2((0,T);\xH^1_0(\Omega)^d)$-norm provided that $\mu>0$. However, even for the Euler case where $\mu=0$, one can derive a uniform bound on $(\bfu\exm)_{m\in\xN}$ in any discrete norm. Indeed, since the mesh is fixed, $\rho\exm\to 1$ as $m\to+\infty$ means that the discrete density unknowns tend to $1$ in every cell. Hence, by the kinetic energy part of \eqref{eq:sch_stab}, there exists a minimum density $\rho_{{\rm min},\disc}>0$ depending on the fixed mesh such that $\norm{\bfu\exm}_{\xL^2(\Omega)^d}\leq \sqrt{\frac{2C}{\rho_{{\rm min},\disc}}}$ for all $m\in\xN$. Therefore, the result of Theorem \ref{Theorem:imp} is still valid for the Euler equations: the solution to the compressible scheme \eqref{eq:implicit_scheme} (with $\mu=\lambda=0$ and homogeneous Neumann boundary conditions on the velocity) converges, when the Mach number tends to zero, towards the solution of an implicit \emph{inf-sup} stable scheme for the incompressible Euler equations.
 \end{remark}
%
\section{Asymptotic analysis of the zero Mach limit for a pressure correction scheme} \label{sec:proj1}

Since the scheme \eqref{eq:implicit_scheme} is fully implicit, the implementation of the algorithm implies to find the solution of a fully non-linear coupled system which is difficult in a real computational context due to the computational cost and lack of robustness.
This is why we also perform the analysis of the low Mach number limit for a semi-implicit scheme which is implemented in the software (CALIF$^3$S \cite{califs}). This scheme is obtained thanks to a partial decoupling of the discrete equations, and falls in the family of pressure correction schemes.
It consists (after a rescaling step for the pressure gradient) in two main steps. A prediction step, where a tentative velocity field is obtained by solving a linearized momentum balance in which the mass convection flux and the pressure gradient are explicit. A correction step where a nonlinear problem on the pressure is solved, and the velocity is updated in such a way to recover the discrete mass conservation equation. 

\medskip
Let $\delta t>0$ be a constant time step. 
The approximate solution at time $t_n=n\delta t$ for $1\leq n\leq N=\Ent{T/\delta t}$ is denoted $(\rho^n,\bfu^n)\in\xL_\mesh(\Omega)\times\xbfH_{\edges,0}(\Omega)$.

\medskip
Knowing $(\rho^{n-1},\rho^n,\bfu^n)\in\xL_\mesh(\Omega)\times\xL_\mesh(\Omega)\times\xbfH_{\edges,0}(\Omega)$, the considered algorithm consists in computing $\rho^{n+1}\in\xL_\mesh(\Omega)$ and $\bfu^{n+1}\in\xbfH_{\edges,0}(\Omega)$ through the following steps:
\begin{subequations}\label{eq:correction_scheme}
\begin{align}
\nonumber &
\mbox{{\it Pressure gradient scaling step}:}
\\[2ex] \label{eq:sch_scale} & \quad \displaystyle
(\overline{\gradi p})^n_\edge = \Bigl(\frac{\rho^n_\Ds}{\rho^{n-1}_\Ds}\Bigr)^{1/2} \ (\gradi p^n)_\edge, & \forall \edge \in \edgesint.
\\[2ex]\nonumber &
\mbox{{\it Prediction step} -- Solve for $\tilde \bfu^{n+1} \in\xbfH_{\edges,0}(\Omega)$:}
\\[2ex] \label{eq:sch_mom} & \quad
\dfrac 1 {\delta t}\ \bigl(\rho^n_\Ds \tilde \bfu^{n+1}_\edge-\rho^{n-1}_\Ds \bfu_\edge^n \bigr)
+ \divv(\rho^n  \bfu^n \otimes  \tilde \bfu^{n+1} )_\edge
- \divv (\bftau(\tilde \bfu^{n+1}))_\edge
+ \dfrac{1}{\ma^2}\,(\overline {\gradi p})^n_\edge
=0, & \forall \edge \in \edgesint.
\\[3ex] \nonumber &
\mbox{{\it Correction step} -- Solve for $\rho^{n+1}\in\xL_\mesh(\Omega)$ and $\bfu^{n+1}\in\xbfH_{\edges,0}(\Omega)$:}
\\[1ex] \label{eq:sch_cor} & \quad
\dfrac 1 {\delta t}\ \rho^n_\Ds\ (\bfu^{n+1}_\edge-\tilde \bfu_\edge^{n+1})
+ \dfrac{1}{\ma^2}\,(\gradi p^{n+1})_\edge - \dfrac 1 {\ma^2}\,(\overline{\gradi p})_\edge^n
=0, & \forall \edge \in \edgesint.
\\[2ex] \label{eq:sch_mass} & \quad
\dfrac 1 {\delta t}(\rho^{n+1}_K-\rho^n_K) + \dive(\rho^{n+1} \bfu^{n+1})_K = 0, & \forall K \in \mesh.
\end{align}
\end{subequations}
%
%
\subsection{Initial data and initialization of the scheme}

As mentioned in the introduction, for the pressure correction scheme, it is not sufficient to assume "ill-prepared" initial data. In this section, the initial data $(\rho_0^\ma,\bfu_0^\ma)$ are assumed to be well-prepared in the sense of the following definition:

\begin{definition}
\label{defi_u0_rho0}
The initial data $(\rho_0^\ma,\bfu_0^\ma)$ is said to be \emph{well-prepared} if $\rho_0^\ma >0$, $\rho_0^\ma \in \xL^\infty(\Omega)$, $\bfu_0^\ma \in \xH_0^1(\Omega)^d$ for all $\ma>0$ and if there exists $C$ independent of $\ma$ such that: 
\begin{equation} \label{eq:u0_rho0_wp}
\norm{\bfu_0^\ma}_{\xH^1(\Omega)^d}  \ + \frac{1}{\ma}\,\norm{\dive \,\bfu_0^\ma}_{\xL^2(\Omega)} \ + \frac{1}{\ma^2}\,\norm{\rho^\ma_0- 1}_{\xL^\infty(\Omega)}  \leq  C.
\end{equation}
Consequently, $\rho_0^\ma$ tends to $1$ when $\ma\to 0$; moreover, we suppose that $\bfu_0^\ma$ converges in $\xL^2(\Omega)^d$ towards a function $\bfu_0\in\xL^2(\Omega)^d$ (the uniform boundedness of the sequence in the $\xH^1(\Omega)^d$ norm already implies this convergence up to a subsequence).
\end{definition}

The initial velocities $\bfu_0^\ma$ are in $\xH_0^1(\Omega)^d$. In particular, their trace is thus well defined as an $\xL^2$ function on any smooth (or Lipschitz-continuous) hypersurface of $\Omega$. The initialization of the pressure correction scheme \eqref{eq:correction_scheme} is performed as follows. First, $\rho^0$ and $\bfu^0$ are given by the average of the initial conditions $\rho_0^\ma$ and $\bfu_0^\ma$ respectively on the primal cells and on the faces of the primal cells:
\begin{equation}\label{eq:inicond_sch}
\begin{array}{ll} 
\displaystyle \rho_K^0 = \frac 1 {|K|} \int_K \rho_0^\ma, & \qquad \forall \, K \in \mesh, \\[4ex]
\displaystyle \bfu_\edge^0 = \frac 1 {|\edge|} \int_\edge \bfu_0^\ma, & \qquad \forall \, \edge \in \edgesint.
\end{array}
\end{equation}
Finally, we compute $\rho^{-1}$ by solving the mass balance equation \eqref{eq:sch_mass} for $n=-1$, where the unknown is $\rho^{-1}$ and not $\rho^0$. 
This procedure allows to perform the first prediction step with $(\rho^{-1}_\Ds)_{\edge \in \edges}$, $(\rho^0_\Ds)_{\edge \in \edges}$ and the dual mass fluxes satisfying the mass balance :
 \begin{equation}\label{eq:mass_D_0}
\frac{|D_\edge|}{\delta t} \ (\rho^0_\Ds-\rho^{-1}_\Ds)
+ \sum_{\edged\in\edgesd(D_\edge)} \fluxd(\rho^0,\bfu^0)=0, \qquad  \forall\edge\in\edgesint.
\end{equation}

\medskip
In this section, we prove that for every $\ma>0$, there exists a solution $(\rho^\ma,\bfu^\ma)$ to the pressure correction scheme \eqref{eq:correction_scheme} with the aforementioned initialization, and that for a fixed discretization, \emph{i.e.} for a fixed mesh and a fixed time step $\delta t$, the solution  $(\rho^\ma,\bfu^\ma)$ converges as $\ma\to 0$ towards the solution of a pressure correction scheme for the incompressible Navier-Stokes equations, which is inf-sup stable. 

\medskip
A first property must be checked so that the scheme is well defined, which is the positivity of the density at the fictitious time step $n=-1$. Indeed,  since $\rho_0^\ma$ is assumed to be positive on $\Omega$ and by definition of the discrete density $\rho^0$ \eqref{eq:inicond_sch}, one clearly has $\rho_K^0 > 0$ for all $K\in\mesh$. This positivity property is not clear for $\rho^{-1}$. It is a consequence of the following result.

\begin{lemma} \label{lmm:sch:init}
If the initial conditions $(\rho_0^\ma,\bfu_0^\ma)$ are assumed to be well-prepared in the sense of Definition \ref{defi_u0_rho0}, then there exists a constant $C$ independent of $\ma$ such that:
\begin{equation}
\label{eq:sch:init}
 \frac{1}{\ma^2} \, \max\limits_{K\in\mesh}|\rho_K^0-1| \ +  \frac{1}{\ma^2} \,  \max\limits_{\edge\in\edgesint}  \,|(\gradi p^0)_\edge| \ + \frac{1}{\ma} \, \max\limits_{K\in\mesh}|\rho_K^{-1}-1| \leq C.
\end{equation}
\end{lemma}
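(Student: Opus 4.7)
The three bounds will be treated in the order in which they appear, and the proof is essentially a matter of combining the well-preparedness of the data with the definitions of the discrete operators.

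\textbf{First bound.} The estimate $\ma^{-2}\max_K|\rho_K^0-1|\leq C$ is immediate: since $\rho_K^0$ is the cell average of $\rho_0^\ma$ and since $1$ is the cell average of $1$, Jensen's inequality gives $|\rho_K^0-1|\leq\|\rho_0^\ma-1\|_{L^\infty(\Omega)}$, which is bounded by $C\ma^2$ by \eqref{eq:u0_rho0_wp}.

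\textbf{Second bound.} For the discrete pressure gradient, I use the definition \eqref{eq:def_grad_p} together with the fact that $\eos$ is $C^1$. On the set where the density stays in a fixed neighbourhood of $1$ (which holds for $\ma$ small enough thanks to the first bound), the Lipschitz constant of $\eos$ is bounded by a constant $L$, so that $|\eos(\rho_L^0)-\eos(\rho_K^0)|\leq L(|\rho_L^0-1|+|\rho_K^0-1|)\leq 2LC\ma^2$. Inserting this into \eqref{eq:def_grad_p} and using the (fixed-mesh) bound on $|\edge|/|D_\edge|$ concludes.

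\textbf{Third bound.} This is the main step. The mass balance \eqref{eq:sch_mass} written for $n=-1$ gives
\[
\rho_K^{-1}-1 = (\rho_K^0-1) + \delta t\,\dive(\rho^0\bfu^0)_K,
\]
so that only the last term needs work (the first is $O(\ma^2)$ by the first bound). The key idea is to split
\[
\dive(\rho^0\bfu^0)_K \;=\; \dive(\bfu^0)_K \;+\; \frac{1}{|K|}\sum_{\edge\in\edges(K)}|\edge|\,(\rho_\edge^0-1)\,\bfu_\edge^0\cdot\bfn_{K,\edge},
\]
and to estimate each term separately. For the first term, the definition of $\bfu_\edge^0$ as the face average of $\bfu_0^\ma$ together with the divergence formula on $K$ yields the exact identity $\dive(\bfu^0)_K = |K|^{-1}\int_K \dive\bfu_0^\ma$; a Cauchy-Schwarz inequality in $K$ and the assumption $\|\dive\bfu_0^\ma\|_{L^2(\Omega)}\leq C\ma$ then give $|\dive(\bfu^0)_K|\leq C_{\disc}\,\ma$. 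For the second term, the upwind choice for $\rho_\edge^0$ gives $|\rho_\edge^0-1|\leq\max_{K\in\mesh}|\rho_K^0-1|\leq C\ma^2$, while $|\bfu_\edge^0|$ is controlled by a discrete trace of $\bfu_0^\ma$, which is itself controlled by $\|\bfu_0^\ma\|_{H^1(\Omega)^d}\leq C$; this term is therefore $O(\ma^2)$. Gathering these estimates yields $|\rho_K^{-1}-1|\leq C\ma^2+\delta t\,C_{\disc}\ma$, which is $O(\ma)$ on a fixed discretization, and dividing by $\ma$ concludes.

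\textbf{Expected obstacle.} The only subtle point is the first term in the splitting: a crude estimate based on $|\bfu_\edge^0|\leq C$ would only give $|\dive(\bfu^0)_K|=O(1)$, which is not sufficient. The gain of one factor of $\ma$ comes from the fact that the discrete divergence of the face-averaged velocity \emph{equals} the mean over $K$ of the continuous divergence, so that the well-prepared smallness of $\dive\bfu_0^\ma$ can be used. This is why the initial velocity is averaged on faces and not on dual cells here, and why the assumption on $\|\dive\bfu_0^\ma\|_{L^2(\Omega)}$ in Definition \ref{defi_u0_rho0} appears with a factor $\ma^{-1}$ rather than a factor $\ma^{-2}$.
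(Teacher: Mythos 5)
Your proposal is correct and follows essentially the same route as the paper: cell-averaging plus the $L^\infty$ well-preparedness for the first two bounds, and for the third the identity $\dive(\bfu^0)_K=|K|^{-1}\int_K\dive\bfu_0^\ma$ (valid because $\bfu_\edge^0$ is a face average) combined with the $O(\ma)$ bound on $\dive\bfu_0^\ma$ and an $O(\ma^2)$ remainder controlled by a trace inequality. The only (immaterial) difference is that the paper factors out $\rho_K^0$ rather than $1$ in the splitting of $\dive(\rho^0\bfu^0)_K$.
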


\begin{proof}
Since the initial conditions $(\rho_0^\ma,\bfu_0^\ma)$ are well-prepared, the discrete density at the time step $n=0$ satisfies for all $K\in\mesh$:
\[
 |\rho_K^0-1| \leq  \frac 1 {|K|} \int_K |\rho_0^\ma-1| \leq \, \norm{\rho_0^\ma-1}_{\xL^\infty(K)} \leq C \ma^2.
\]
Then, using the definition of the pressure gradient, we also obtain $|(\gradi p^0)_\edge| \leq C \ma^2$ for all $\edge\in\edgesint$, where the constant $C$ may depend on the (fixed) mesh and on $=\gamma$ (or on $\eos'(1)$ in the general barotropic case) but is independent of $\ma$. Let us then verify that for $\ma$ small enough the discrete density at time step $n=-1$ is close to $1$. This density is computed in such a manner that, for all $K\in\mesh$:
\[
\begin{aligned}
\rho_K^{-1} -1 &= \rho_K^0 -1 + \delta t\, \sum_{\edge\in\edges(K)} \frac{|\edge|}{|K|}\, \rho_\edge^0 \,\bfu_\edge^0\cdot\bfn_{K,\edge} \\
		& = \rho_K^0 -1 + \delta t\, \rho_K^0\, \sum_{\edge\in\edges(K)}\frac{|\edge|}{|K|}\,\,\bfu_\edge^0\cdot\bfn_{K,\edge}  + \delta t\,  \sum_{\edge\in\edges(K)}\frac{|\edge|}{|K|}\, (\rho_\edge^0-\rho_K^0) \,\bfu_\edge^0\cdot\bfn_{K,\edge}\\
		& = \rho_K^0 -1 + \delta t\, \rho_K^0 \frac{1}{|K|} \sum_{\edge\in\edges(K)} \int_\edge \bfu_0^\ma \cdot \bfn_{K,\edge}
                  + \delta t\, \sum_{\edge\in\edges(K)}\frac{|\edge|}{|K|}\, (\rho_\edge^0-\rho_K^0) \,\bfu_\edge^0\cdot\bfn_{K,\edge} \\
		& = \rho_K^0 -1 + \delta t\, \rho_K^0 \frac{1}{|K|}\, \int_K \dive \, \bfu_0^\ma  + \delta t\, \sum_{\edge\in\edges(K)}\frac{|\edge|}{|K|}\, (\rho_\edge^0-\rho_K^0) \,\bfu_\edge^0\cdot\bfn_{K,\edge}.
\end{aligned}
\]
We have already proven that the first term and thus the third term (using, for this latter, a trace inequality to bound the face velocity) in the right-hand side of the above equality are bounded by $C \ma^2$ for some constant $C$ independent of $\ma$ (but dependent on the mesh).
The second term is bounded by $C\ma$ for some constant C since the divergence of the initial velocity satisfies \eqref{eq:u0_rho0_wp}.
The expected bound on $|\rho_K^{-1}-1|$ follows.
\end{proof}

\begin{remark}
All the developments below are still valid under the slightly less restrictive condition on the divergence of the initial velocities:  
\[
 \norm{\dive \, \bfu_0^\ma}_{\xL^2(\Omega)} \to 0 \quad \text{as $\ma\to0$}.
\]
Indeed, under this relaxed condition, we still have $\max\limits_{K\in\mesh}|\rho_K^{-1}-1|\to 0$ as $\ma\to 0$.
\end{remark}
%
%
\subsection{A priori estimates}

Let us now derive the estimates satisfied by the solutions of the pressure correction scheme.
As for the implicit scheme, any solution of the pressure correction scheme satisfies a discrete counterpart to the kinetic energy balance \eqref{eq:pb_ke}.

\begin{lemma}[Discrete kinetic energy balance] \label{lmm:corr_ke}
Any solution to the pressure correction scheme \eqref{eq:correction_scheme} satisfies the following equality, for all $\edge \in \edgesint$ and $0\leq n \leq N-1$:
\begin{multline}
\label{eq:sch_ke}
\dfrac 1 {2\delta t} \Big ( \rho_\Ds^n\,|\bfu_\edge^{n+1}|^2 - \rho_\Ds^{n-1}\,|\bfu_\edge^n|^2 \Big ) + \frac{1}{2|D_\edge|} \sum_{\edged=D_\edge|D_\edge'} \hspace{-2ex}\fluxd(\rho^n,\bfu^n) \,\tilde \bfu_\edge^{n+1}\cdot \tilde \bfu_{\edge'}^{n+1}
- \divv(\bftau(\tilde \bfu^{n+1}))_\edge \cdot  \tilde \bfu_\edge^{n+1} 
\\
+ \frac{1}{\ma^2}(\gradi p^{n+1})_\edge \cdot \bfu_\edge^{n+1}  
+ \frac{\delta t}{2\,\ma^4} \ \Bigl( \frac{\bigl | (\gradi p^{n+1})_\edge \bigr |^2}{\rho^n_\Ds}   -\frac{\bigl | (\gradi p^n)_\edge \bigr |^2}{\rho^{n-1}_\Ds}\Bigr)+ R_\edge^{n+1}=0, 
\end{multline}
where $R_\edge^{n+1}= \dfrac{1}{2\delta t} \rho_\Ds^{n-1}|\tilde \bfu_\edge^{n+1}-\bfu_\edge^n|^2$.
\end{lemma}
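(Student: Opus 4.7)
My plan is to mimic the derivation of Lemma \ref{lmm:i_ke} while accounting for the three-step splitting of the pressure correction scheme. The prediction step \eqref{eq:sch_mom} is written in terms of $\tilde\bfu^{n+1}$ and the rescaled pressure gradient $(\overline{\gradi p})^n_\edge$, whereas the target balance involves the end-of-step velocity $\bfu^{n+1}$ and the raw gradient $(\gradi p^{n+1})_\edge$. The correction step \eqref{eq:sch_cor} bridges $\tilde\bfu^{n+1}$ and $\bfu^{n+1}$, and the scaling step \eqref{eq:sch_scale} is precisely designed so that the leftover pressure-gradient quadratic terms telescope in $n$.

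I begin by taking the scalar product of \eqref{eq:sch_mom} with $\tilde\bfu^{n+1}_\edge$. Applying the algebraic identity $2(\rho a-\rho^* b)\cdot a=(\rho|a|^2-\rho^*|b|^2)+\rho^*|a-b|^2+(\rho-\rho^*)|a|^2$ with $\rho=\rho^n_\Ds$, $\rho^*=\rho^{n-1}_\Ds$, $a=\tilde\bfu^{n+1}_\edge$ and $b=\bfu^n_\edge$ handles the time derivative, producing the kinetic-energy increment in $\tilde\bfu$, the residual $R_\edge^{n+1}$ and a spurious term proportional to $(\rho^n_\Ds-\rho^{n-1}_\Ds)|\tilde\bfu^{n+1}_\edge|^2/(2\delta t)$. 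Symmetrizing the centered convection term via $\tilde\bfu^{n+1}_\edge\cdot\tilde\bfu^{n+1}_\edged=\frac{1}{2}|\tilde\bfu^{n+1}_\edge|^2+\frac{1}{2}\tilde\bfu^{n+1}_\edge\cdot\tilde\bfu^{n+1}_{\edge'}$ yields the bilinear expression appearing in \eqref{eq:sch_ke} plus a diagonal remainder proportional to $|\tilde\bfu^{n+1}_\edge|^2\sum_\edged\fluxd(\rho^n,\bfu^n)$. By Lemma \ref{lmm:mass_D} applied to \eqref{eq:sch_mass} at the previous step, this last sum equals $-(|D_\edge|/\delta t)(\rho^n_\Ds-\rho^{n-1}_\Ds)$, so the remainder cancels the spurious time-derivative contribution exactly.

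Next, I rewrite the correction step \eqref{eq:sch_cor} in the equivalent form $\rho^n_\Ds\bfu^{n+1}_\edge+(\delta t/\ma^2)(\gradi p^{n+1})_\edge = \rho^n_\Ds\tilde\bfu^{n+1}_\edge+(\delta t/\ma^2)(\overline{\gradi p})^n_\edge$. Dividing both sides by $\sqrt{\rho^n_\Ds}$, taking the squared Euclidean norm and then dividing by $2\delta t$ yields the key identity
\[
\frac{\rho^n_\Ds}{2\delta t}|\tilde\bfu^{n+1}_\edge|^2+\frac{1}{\ma^2}(\overline{\gradi p})^n_\edge\cdot\tilde\bfu^{n+1}_\edge = \frac{\rho^n_\Ds}{2\delta t}|\bfu^{n+1}_\edge|^2+\frac{1}{\ma^2}(\gradi p^{n+1})_\edge\cdot\bfu^{n+1}_\edge+\frac{\delta t}{2\ma^4\rho^n_\Ds}\bigl(|(\gradi p^{n+1})_\edge|^2-|(\overline{\gradi p})^n_\edge|^2\bigr).
\]
The scaling step \eqref{eq:sch_scale} enforces $|(\overline{\gradi p})^n_\edge|^2/\rho^n_\Ds=|(\gradi p^n)_\edge|^2/\rho^{n-1}_\Ds$, so the bracketed residual becomes exactly the telescoping quantity displayed in \eqref{eq:sch_ke}. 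Substituting this identity into the equation produced by the first step yields the announced balance.

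The only delicate point in this plan is the bookkeeping of the dual-density weights carried by $|(\gradi p^{n+1})_\edge|^2$ and $|(\gradi p^n)_\edge|^2$: the square-root prefactor $(\rho^n_\Ds/\rho^{n-1}_\Ds)^{1/2}$ appearing in \eqref{eq:sch_scale} is tailor-made so that these two weights combine into a clean telescoping sum when \eqref{eq:sch_ke} is later summed over $n$ to produce a global entropy estimate; any other rescaling (for instance a naive $(\overline{\gradi p})^n=(\gradi p^n)$) would leave a non-vanishing remainder and break the proof of the discrete counterpart of \eqref{estimate_Pi}.
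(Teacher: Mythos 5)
Your proof is correct and follows essentially the same route as the paper: take the scalar product of the prediction step with $\tilde \bfu_\edge^{n+1}$, use the dual mass balance at the previous time step (Lemma \ref{lmm:mass_D}) to absorb the $(\rho^n_\Ds-\rho^{n-1}_\Ds)|\tilde\bfu^{n+1}_\edge|^2$ remainder, then square the suitably normalized correction equation and exploit the scaling step so that $|(\overline{\gradi p})^n_\edge|^2/\rho^n_\Ds=|(\gradi p^n)_\edge|^2/\rho^{n-1}_\Ds$ yields the telescoping pressure term. Your closing remark on the role of the $(\rho^n_\Ds/\rho^{n-1}_\Ds)^{1/2}$ rescaling is exactly the point of the pressure gradient scaling step.
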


\begin{proof}
Let us take the scalar product of the velocity prediction equation \eqref{eq:sch_mom} with the corresponding velocity unknown $\tilde \bfu_\edge^{n+1}$.
Thanks to the time shift in the dual densities and the dual mass fluxes in the velocity prediction equation \eqref{eq:sch_mom}, together with the dual mass balance (at the previous time step) 
\[
\frac 1 {\delta t} \ (\rho^n_\Ds-\rho^{n-1}_\Ds)+ |D_\edge|^{-1}\sum_{\edged\in\edges(D_\edge)} \fluxd(\rho^n,\tilde \bfu^n) = 0,
\qquad \forall \edge\in\edgesint,
\]
we obtain, by a similar computation to that in the proof of Lemma \ref{lmm:i_ke}:
\begin{multline} \label{cvbp:eq:mom_pred}
\dfrac{1}{2\,\delta t} \Bigl( \rho^n_\Ds |\tilde u^{n+1}_\edge|^2-\rho^{n-1}_\Ds |\bfu_\edge^n|^2 \Bigr)
+ \dfrac{1}{2\,|D_\edge|}\ \sum_{\edged=\edgeedgeprime} \fluxd(\rho^n,\tilde \bfu^n)\ \tilde \bfu^{n+1}_\edge \cdot \tilde \bfu^{n+1}_{\edge'} 
 \\ - \divv(\bftau(\tilde\bfu^{n+1}))_\edge \cdot  \tilde \bfu_\edge^{n+1} + \frac{1}{\ma^2}(\overline{\gradi p})_\edge^{n} \cdot \tilde \bfu_\edge^{n+1} + R_\edge^{n+1}
=0,
\end{multline}
where $R_\edge^{n+1}= \dfrac{1}{2\delta t} \rho_\Ds^{n-1}|\tilde \bfu_\edge^{n+1}-\bfu_\edge^n|^2$.
Dividing the velocity correction equation \eqref{eq:sch_cor} by $\Big (\dfrac{\rho^n_\Ds}{\delta t} \Big )^{\frac 1 2} $, we obtain:
\[
\Bigl(\frac{\rho^n_\Ds}{\delta t} \Bigr)^{1/2}\, \bfu_\edge^{n+1}
+ \Bigl( \frac{\delta t }{\rho^n_\Ds}\Bigr)^{1/2}\, \frac{1}{\ma^2}\, (\gradi p^{n+1})_\edge
= \Bigl(\frac{\rho^n_\Ds}{\delta t}  \Bigr)^{1/2}\,\tilde \bfu_\edge^{n+1}
+ \Bigl(\frac{\delta t}{\rho^n_\Ds}\Bigr)^{1/2}\, \frac{1}{\ma^2}\, (\overline{\gradi  p})_\edge^{n}.
\]
Squaring this relation and summing it with \eqref{cvbp:eq:mom_pred} yields
\begin{multline*}
 \dfrac 1 {2\delta t} \Big ( \rho_\Ds^n\,|\bfu_\edge^{n+1}|^2 - \rho_\Ds^{n-1}\,|\bfu_\edge^n|^2 \Big ) + \frac{1}{2|D_\edge|} \sum_{\edged=D_\edge|D_\edge'}\fluxd(\rho^n,\bfu^n) \,\tilde \bfu_\edge^{n+1}\cdot\tilde \bfu_{\edge'}^{n+1} \\  - \divv(\bftau(\tilde \bfu^{n+1}))_\edge \cdot  \tilde \bfu_\edge^{n+1} + \frac{1}{\ma^2}(\gradi p^{n+1})_\edge \cdot \bfu_\edge^{n+1} + R_\edge^{n+1}+ \frac{1}{\ma^4}\,P_\edge^{n+1}=0,
\end{multline*}
where $P_\edge^{n+1} = \dfrac{\delta t}{2\, \rho_\Ds^n}\left ( \bigl | (\gradi p^{n+1})_\edge \bigr |^2- \bigl | (\overline{\gradi p})^{n}_\edge \bigr |^2\right )$.
Recalling that the rescaled pressure is defined as 
\[
(\overline{\gradi p})^{n}_\edge = \Bigl(\frac{\rho^n_\Ds}{\rho^{n-1}_\Ds}\Bigr)^{1/2} \ (\gradi p^n)_\edge
\]
for all $\edge\in\edgesint$, we obtain \eqref{eq:sch_ke}.
\end{proof}

\medskip
The discrete renormalization identities are again valid for the pressure correction algorithm, thanks to the fact that the mass balance \eqref{eq:sch_mass} is satisfied. The proof is identical to that of Lemma \ref{lem:i_renorm} given for the implicit scheme.

\begin{lemma}[Discrete renormalization identities] \label{lem:sch_cor_remorm}
A solution to the system \eqref{eq:correction_scheme} satisfies for all $K\in\mesh$ and $0\leq n \leq N-1$ the two following identities:
\begin{align} & \label{eq:sch_renorm}
\dfrac 1 {\delta t} \Big ( \psi_\gamma(\rho^{n+1}_K)-\psi_\gamma(\rho^n_K) \Big ) + \dive(\psi_\gamma(\rho^{n+1}) \bfu^{n+1})_K 
+  \eos(\rho_K^{n+1}) \, \dive( \bfu^{n+1})_K + R_K^{n+1} = 0, 
\\ & \nonumber
\dfrac 1 {\delta t} \Big ( \Pi_\gamma(\rho^{n+1}_K)-\Pi_\gamma(\rho^n_K) \Big )
+ \dive\Big(\big(\psi_\gamma(\rho^{n+1}) - \psi_\gamma'(1)\,\rho^{n+1}\big)\, \bfu^{n+1} \Big)_K 
\\ & \label{eq:sch_renorm:Pi} \hspace{50ex}
+ \eos(\rho_K^{n+1}) \, \dive( \bfu^{n+1})_K + R_K^{n+1} = 0,
\end{align}
where $R_K^{n+1}$ has the same expression as in Lemma \ref{lem:i_renorm}.
\end{lemma}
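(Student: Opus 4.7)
The crucial observation is that the mass balance \eqref{eq:sch_mass} in the pressure correction scheme has exactly the same algebraic form as the mass balance \eqref{eq:isch_mass} in the implicit scheme: both read $\delta t^{-1}(\rho^{n+1}_K-\rho^n_K) + \dive(\rho^{n+1}\bfu^{n+1})_K = 0$, with the convective flux defined by the same upwind formula \eqref{eq:def_FKedge}. Since the renormalization identities are derived solely from the mass balance (the momentum balance plays no role), the argument of Lemma \ref{lem:i_renorm} can be transcribed verbatim. The plan is therefore to reproduce that argument in the present notation and then deduce \eqref{eq:sch_renorm:Pi} from \eqref{eq:sch_renorm} by a linear combination with the discrete mass balance.

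For \eqref{eq:sch_renorm}, I would multiply \eqref{eq:sch_mass} by $\psi_\gamma'(\rho_K^{n+1})$ and treat the time-derivative and the convective terms separately. For the time-derivative term, a second-order Taylor expansion of the convex function $\psi_\gamma$ around $\rho_K^{n+1}$ yields
\[
\psi_\gamma'(\rho_K^{n+1})\,(\rho_K^{n+1}-\rho_K^n) = \psi_\gamma(\rho_K^{n+1})-\psi_\gamma(\rho_K^n) + \tfrac{1}{2}\,\psi_\gamma''(\bar\rho_K^{n+\frac{1}{2}})\,(\rho_K^{n+1}-\rho_K^n)^2,
\]
for some intermediate value $\bar\rho_K^{n+\frac{1}{2}}$ between $\rho_K^n$ and $\rho_K^{n+1}$. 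For the convective term, I would split the flux $F_{K,\edge}(\rho^{n+1},\bfu^{n+1}) = |\edge|\,\rho_\edge^{n+1}\,\bfu_\edge^{n+1}\cdot\bfn_{K,\edge}$ using the upwind value $\rho_\edge^{n+1}$, then perform a local Taylor expansion of $\psi_\gamma$ around $\rho_K^{n+1}$ evaluated at $\rho_\edge^{n+1}$. Using the identity $\rho\,\psi_\gamma'(\rho) - \psi_\gamma(\rho) = \eos(\rho)$ (which follows from the definition $\psi_\gamma(\rho) = \rho\int^\rho \eos(s)/s^2\,ds$), one recognizes the terms $\dive(\psi_\gamma(\rho^{n+1})\bfu^{n+1})_K$ and $\eos(\rho_K^{n+1})\,\dive(\bfu^{n+1})_K$ in \eqref{eq:sch_renorm}. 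The quadratic Taylor remainders only survive on the upwind side of each face (where $\bfu_\edge^{n+1}\cdot\bfn_{K,\edge} < 0$ from the point of view of $K$), yielding exactly the expression for $R_K^{n+1}$ stated in Lemma \ref{lem:i_renorm}; its non-negativity follows from the convexity of $\psi_\gamma$ and the positivity of the density (Lemma \ref{lmm:loc_ener_i}).

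To obtain \eqref{eq:sch_renorm:Pi}, I would multiply the discrete mass balance \eqref{eq:sch_mass} by $-\psi_\gamma'(1)$ and add it to \eqref{eq:sch_renorm}. Since $\psi_\gamma'(1)$ is a constant, this transforms the time-derivative term $\delta t^{-1}(\psi_\gamma(\rho^{n+1}_K)-\psi_\gamma(\rho^n_K))$ into $\delta t^{-1}(\Pi_\gamma(\rho_K^{n+1})-\Pi_\gamma(\rho_K^n))$ (by the definition $\Pi_\gamma(\rho)=\psi_\gamma(\rho)-\psi_\gamma(1)-\psi_\gamma'(1)(\rho-1)$ and the cancellation of the constant $\psi_\gamma(1)$) and the convective term gets modified into $\dive\bigl((\psi_\gamma(\rho^{n+1})-\psi_\gamma'(1)\rho^{n+1})\bfu^{n+1}\bigr)_K$, as desired.

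There is no real obstacle here; the only point that requires a small amount of care is bookkeeping the upwind splitting of the convective flux so that the remainder takes precisely the claimed form involving the faces with negative outward velocity. Since the scheme uses the same upwind discretization as the implicit case, this bookkeeping is identical to that of \cite{her-14-ons}, Lemma 3.2, on which the proof of Lemma \ref{lem:i_renorm} is already based.
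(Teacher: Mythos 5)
Your proposal is correct and follows exactly the paper's route: the paper simply observes that the mass balance \eqref{eq:sch_mass} has the same form as \eqref{eq:isch_mass} and declares the proof identical to that of Lemma \ref{lem:i_renorm} (which itself defers to \cite[Lemma 3.2]{her-14-ons} for the first identity and uses the $-\psi_\gamma'(1)$ linear-combination trick for the second). You merely flesh out the Taylor-expansion and upwind-splitting details that the cited reference contains; the only cosmetic slip is citing Lemma \ref{lmm:loc_ener_i} (implicit scheme) rather than Lemma \ref{lmm:loc_ener_sch} for the positivity of the density in the correction scheme.
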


\medskip
\begin{lemma}[Local-in-time discrete entropy inequality, existence of a solution] \label{lmm:loc_ener_sch}
Let $\ma>0$ and assume that the initial data $(\rho_0^\ma,\bfu_0^\ma)$ is well-prepared in the sense of Definition \ref{defi_u0_rho0}, \ie\ satisfies \eqref{eq:u0_rho0_wp}.
Then, for $\ma$ small enough to ensure that $\rho^{-1}$ is positive, there exists a solution $(\rho^n,\bfu^n)_{0\leq n\leq N}$ to the scheme \eqref{eq:correction_scheme} such that $\rho^n >0$ for $1\leq n\leq N$ and the following inequality holds for $1\leq n\leq N-1$:
\begin{multline}\label{sch_loc_ener}
\frac 1 2 \sum_{\edge\in \edgesint} |D_\edge| \Big ( \rho^n_\Ds\ |\bfu^{n+1}_\edge|^2 -  \rho^{n-1}_\Ds\ |\bfu^n_\edge|^2 \Big ) \ + \ \frac 1 {\ma^2} \sum_{K\in \mesh}|K| \Big ( \Pi_\gamma(\rho^{n+1}_K) - \Pi_\gamma(\rho^n_K) \Big )
\\
+ \mu\ \delta t\ \norm{\tilde \bfu^{n+1}}_{1,\disc}^2 
+ \frac {\delta t^2} {2\ma^4} \sum_{\edge\in \edgesint}|D_\edge|\,  
\ \Bigl( \frac{\bigl | (\gradi p^{n+1})_\edge \bigr |^2}{\rho^n_\Ds}   -\frac{\bigl | (\gradi p^n)_\edge \bigr |^2}{\rho^{n-1}_\Ds}\Bigr)  
+ \mathcal{R}^{n+1} \leq 0,
\end{multline}
where $\mathcal{R}^{n+1} = \sum_{\edge \in \edgesint} R^{n+1}_\edge + \ma^{-2}\sum_{K \in \mesh} R_K^{n+1}\geq 0$.
\end{lemma}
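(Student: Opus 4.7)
The plan is to mimic the strategy used for the implicit scheme in Lemma \ref{lmm:loc_ener_i}, exploiting the kinetic energy balance \eqref{eq:sch_ke} and the renormalization identity \eqref{eq:sch_renorm:Pi} that have already been established.

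First, I would settle positivity. For $\ma$ small enough, Lemma \ref{lmm:sch:init} gives $|\rho_K^{-1}-1|\le C\ma$ for all $K\in\mesh$, so that $\rho^{-1}>0$ and, by definition, $\rho^0>0$. The induction step is then handled exactly as in Lemma \ref{lmm:loc_ener_i}: given $\rho^n>0$ and a velocity field $\bfu^{n+1}$, the upwind mass balance \eqref{eq:sch_mass} preserves positivity of the density via the standard argument of \cite[Lemma 2.1]{gas-11-dis}, so that $\rho^{n+1}>0$.

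Next, I would derive the inequality \eqref{sch_loc_ener}. Multiplying \eqref{eq:sch_ke} by $|D_\edge|$ and summing over $\edge\in\edgesint$, then multiplying \eqref{eq:sch_renorm:Pi} by $|K|/\ma^2$ and summing over $K\in\mesh$, and finally adding the two obtained relations, I would use the three crucial facts: (i) the dual convection fluxes satisfy the conservativity property (H2), so they telescope out in the summation; (ii) the diffusion operator is coercive with respect to the broken norm by Lemma \ref{lmm:diff_coerc}, yielding the term $\mu\,\delta t\,\norm{\tilde\bfu^{n+1}}_{1,\disc}^2$; (iii) the grad-div duality \eqref{eq:grad-div} cancels the pressure-gradient contribution $\ma^{-2}\sum_\edge|D_\edge|(\gradi p^{n+1})_\edge\cdot\bfu_\edge^{n+1}$ against the pressure work $\ma^{-2}\sum_K|K|\,\eos(\rho_K^{n+1})\dive(\bfu^{n+1})_K$. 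The remaining terms reorganize into the claimed left-hand side, with the residual $\mathcal R^{n+1}\ge 0$ collecting $R_\edge^{n+1}$ and $\ma^{-2} R_K^{n+1}$, both of which are non-negative by Lemmas \ref{lmm:corr_ke} and \ref{lem:sch_cor_remorm}.

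Finally, I would establish existence. The fractional-step structure of \eqref{eq:correction_scheme} makes this much more elementary than the implicit case: once the pressure rescaling \eqref{eq:sch_scale} is evaluated explicitly from known data, the prediction step \eqref{eq:sch_mom} is a linear square system for $\tilde\bfu^{n+1}$ whose matrix is the sum of a mass term (with the positive density $\rho^n_\Ds$), a skew-symmetric convection term built from the fixed fluxes $F_{K,\edge}(\rho^n,\bfu^n)$ (which are mass-conservative in the dual sense thanks to Lemma \ref{lmm:mass_D} applied at the previous step), and the coercive diffusion operator of Lemma \ref{lmm:diff_coerc}; solvability is thus immediate. Then, eliminating $\bfu^{n+1}$ from \eqref{eq:sch_cor} into \eqref{eq:sch_mass}, the correction step reduces to a nonlinear discrete elliptic problem for $\rho^{n+1}$ alone, of the form $\rho^{n+1}-\delta t^2\,\ma^{-2}\dive\!\bigl(\alpha^n\gradi\eos(\rho^{n+1})\bigr)=g^n$, with $\alpha^n_\edge=|D_\edge|/(|\edge|\,\rho^n_\Ds)>0$ and $g^n$ depending only on $\rho^n$, $\tilde\bfu^{n+1}$ and $(\overline{\gradi p})^n$. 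Since $\eos$ is increasing, this problem is monotone, and a Brouwer fixed-point argument analogous to \cite[Proposition 5.2]{eym-10-conv} (in fact simpler here, as the nonlinearity is scalar) yields a positive solution $\rho^{n+1}$; the velocity $\bfu^{n+1}$ is then defined explicitly by \eqref{eq:sch_cor}.

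The only delicate point is to make sure the pressure-squared telescoping term in \eqref{sch_loc_ener} indeed emerges with the right sign; this is exactly what the rescaling \eqref{eq:sch_scale} of the pressure gradient in the prediction step is designed for, and it has already been carried out at the local level in the proof of Lemma \ref{lmm:corr_ke}, so the global inequality follows by straightforward summation.
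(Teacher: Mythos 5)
Your proposal follows the paper's proof essentially verbatim: positivity of $\rho^{-1},\rho^0$ from Lemma \ref{lmm:sch:init} and of $\rho^{n}$ from the upwind mass balance, the inequality \eqref{sch_loc_ener} by summing $|D_\edge|\times$\eqref{eq:sch_ke} over faces and $\ma^{-2}|K|\times$\eqref{eq:sch_renorm:Pi} over cells and invoking conservativity of the dual fluxes, coercivity of the diffusion term (Lemma \ref{lmm:diff_coerc}) and the grad--div duality \eqref{eq:grad-div}, and existence via solvability of the linear prediction system followed by a Brouwer fixed-point argument for the correction step as in \cite[Proposition 5.2]{eym-10-conv}. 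Two small imprecisions, neither fatal: the centered convection bilinear form is skew-symmetric only up to a diagonal term proportional to $\rho^n_\Ds-\rho^{n-1}_\Ds$ which must be absorbed by the mass term through the dual mass balance (this is precisely what the paper's identity \eqref{cvbp:eq:mom_pred}, combined with Young's inequality on the explicit pressure gradient, accomplishes), and the nonlinear problem obtained by eliminating $\bfu^{n+1}$ from \eqref{eq:sch_cor}--\eqref{eq:sch_mass} still carries the unknown $\rho^{n+1}$ in the convection term $\delta t\,\dive(\rho^{n+1}\tilde\bfu^{n+1})$ and in the coefficient multiplying $\gradi p^{n+1}$, so it is not of the simple monotone form you wrote --- but your ultimate appeal to Brouwer is exactly what the paper does.
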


\begin{proof}
For $\ma$ small enough, by Lemma \ref{lmm:sch:init}, both $\rho^{-1}$ and $\rho^0$ are positive.
The positivity of $\rho^n$, for $1 \leq n \leq N$, is a consequence of the properties of the upwind choice \eqref{eq:def_FKedge} for $\rho$ in the mass balance of the correction step \eqref{eq:sch_mass}.

After multiplication by $|D_\edge|$, we sum the kinetic energy balance equation \eqref{eq:sch_ke} over the faces, and after multiplication by $\ma^{-2}|K|$, we sum the relative entropy balance \eqref{eq:sch_renorm:Pi} over the primal cells, and finally sum the two obtained relations.
Since the discrete gradient and divergence operators are dual with respect to the $\xL^2$ inner product (see \eqref{eq:grad-div}), noting that the conservative fluxes vanish in the summation and that the diffusion term is coercive (see Lemma \ref{lmm:diff_coerc}), we get \eqref{sch_loc_ener}.

Given discrete density and velocity fields $(\rho^{n-1},\rho^n,\bfu^n)$, the predicted velocity $\tilde \bfu^{n+1}$ is the solution of the linear system \eqref{eq:sch_mom}.
Multiplying \eqref{cvbp:eq:mom_pred} by $|D_\edge|$, summing over the faces and using Young's inequality, yields the following inequality, which is valid for all $\alpha >0$:
\begin{equation*}
\frac{1}{2\delta t}\norm{\sqrt{\underline \rho^n}\tilde \bfu^{n+1}}_{\xL^2}^2 
- \frac{\alpha}{2\ma^2}\norm{\tilde \bfu^{n+1}}_{\xL^2}^2
+ \mu\ \norm{\tilde \bfu^{n+1}}_{1,\disc}^2
\leq
\frac{1}{2\delta t}\norm{\sqrt{\underline\rho^{n-1}}\bfu^n}_{\xL^2}^2 + \frac{1}{2\alpha\ma^2}\norm{(\overline{\gradi p})^{n}}_{\xL^2}^2.
\end{equation*}
In this inequality, $\underline\rho^n$ and $\underline\rho^{n-1}$ are the piecewise constant fields equal to $\rho_\Ds^n$ and $\rho_\Ds^{n-1}$ on each dual face $D_\edge$. Choosing $\alpha>0$ small enough yields the coercivity of the linear system \eqref{eq:sch_mom} and therefore the existence of a unique solution $\tilde \bfu^{n+1}$ to the prediction step. The existence of a solution $(\rho^{n+1},\bfu^{n+1})$ to the correction step \eqref{eq:sch_cor}-\eqref{eq:sch_mass} follows from the Brouwer fixed point theorem, by an easy adaptation of the proof of \cite[Proposition 5.2]{eym-10-conv}.
\end{proof}

Let us now turn to the global entropy inequality.

\begin{lemma}[Global discrete entropy inequality]
Let $\ma>0$ and assume that the initial data $(\rho_0^\ma,\bfu_0^\ma)$ is well-prepared in the sense of Definition \ref{defi_u0_rho0}, \ie\ satisfies \eqref{eq:u0_rho0_wp}.
By Lemma \ref{lmm:loc_ener_sch}, there exists a solution $(\rho^n,\bfu^n)_{0\leq n\leq N}$ to the scheme \eqref{eq:implicit_scheme}. 
In addition, there exists $C>0$ independent of $\ma$ such that, for $\ma$ small enough and for all $1\leq n\leq N$:
\begin{equation} \label{eq:corr_stap_bis}
\frac 1 2 \sum_{\edge\in \edgesint} |D_\edge| \ \rho^{n-1}_\Ds\ |\bfu^n_\edge|^2
+ \mu\  \sum_{k=1}^n \delta t\ \norm{\tilde \bfu^k}_{1,\disc}^2
+ \frac 1 {\ma^2} \sum_{K\in \mesh}|K| \, \Pi_\gamma(\rho^n_K)
+ \frac {\delta t^2}{2\ma^4} \sum_{\edge\in\edgesint} \frac{|D_\edge|}{\rho^{n-1}_\Ds}\ |(\gradi p^n)_\edge|^2
\leq C.
\end{equation}
\end{lemma}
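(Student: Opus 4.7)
The strategy is exactly the one used in the implicit case: sum the local-in-time inequality \eqref{sch_loc_ener} over time steps and bound the initial data contributions using the well-preparedness assumption \eqref{eq:u0_rho0_wp} together with Lemma \ref{lmm:sch:init}.

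First, I would write \eqref{sch_loc_ener} at time step $k$ instead of $n$ and sum from $k=0$ to $k=n-1$. The kinetic energy, the entropy $\Pi_\gamma$, and the rescaled squared pressure gradient terms all telescope, while the diffusion term accumulates and the non-negative residual $\mathcal{R}^{k+1}$ can simply be dropped. This yields
\begin{multline*}
\frac 1 2 \sum_{\edge\in\edgesint} |D_\edge|\, \rho^{n-1}_\Ds\, |\bfu^n_\edge|^2
+ \frac 1 {\ma^2} \sum_{K\in\mesh} |K|\, \Pi_\gamma(\rho^n_K)
+ \mu \sum_{k=1}^n \delta t\, \norm{\tilde\bfu^k}_{1,\disc}^2 \\
+ \frac{\delta t^2}{2\ma^4} \sum_{\edge\in\edgesint} |D_\edge|\, \frac{|(\gradi p^n)_\edge|^2}{\rho^{n-1}_\Ds}
\ \leq\ I_{\rm kin} + I_{\Pi} + I_p,
\end{multline*}
where $I_{\rm kin}$, $I_\Pi$, $I_p$ denote the three initial contributions corresponding respectively to the kinetic energy at $n=0$, the $\Pi_\gamma$ term at $n=0$, and the pressure gradient term at $n=0$. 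All that remains is to show that the right-hand side is bounded independently of $\ma$ for $\ma$ small enough.

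For $I_{\rm kin} = \frac 12 \sum_{\edge\in\edgesint}|D_\edge|\,\rho^{-1}_\Ds\,|\bfu^0_\edge|^2$: by Lemma \ref{lmm:sch:init}, $|\rho_K^{-1}-1|\leq C\ma$ for all $K$, so $\rho^{-1}_\Ds\leq 2$ on each dual cell for $\ma$ small enough (recall the dual densities are convex combinations of the primal ones); and by the definition \eqref{eq:inicond_sch} of $\bfu^0_\edge$ as a face-average of $\bfu_0^\ma$, a classical trace inequality together with $\norm{\bfu_0^\ma}_{\xH^1(\Omega)^d}\leq C$ yields a uniform bound on $\sum_\edge |D_\edge|\,|\bfu^0_\edge|^2$. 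For $I_\Pi = \ma^{-2}\sum_{K\in\mesh}|K|\,\Pi_\gamma(\rho^0_K)$: Lemma \ref{lmm:sch:init} gives $|\rho_K^0-1|\leq C\ma^2$, so $\rho_K^0\in(0,2)$ for $\ma$ small, and the upper bound \eqref{upperPi} from Lemma \ref{lem:pi} yields $\Pi_\gamma(\rho_K^0)\leq C_\gamma |\rho_K^0-1|^2 \leq C\ma^4$, so $I_\Pi \leq C\ma^2 \to 0$. For $I_p = \frac{\delta t^2}{2\ma^4}\sum_\edge |D_\edge|\,|(\gradi p^0)_\edge|^2/\rho^{-1}_\Ds$: using again Lemma \ref{lmm:sch:init}, $|(\gradi p^0)_\edge|\leq C\ma^2$ and $\rho^{-1}_\Ds\geq 1/2$ for $\ma$ small enough, so $I_p \leq C\delta t^2$.

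The main technical point, and essentially the only non-routine element, is the treatment of $I_p$: the factor $\ma^{-4}$ in front of the squared pressure gradient is exactly compensated by the $\ma^4$ scaling of $|(\gradi p^0)_\edge|^2$ that follows from the well-preparedness assumption \eqref{eq:u0_rho0_wp} (as established in Lemma \ref{lmm:sch:init}). This is precisely why, in contrast with the implicit scheme, well-prepared initial data are needed here: the rescaled gradient $(\overline{\gradi p})^n$ introduces an additional $\ma^{-4}$ kinetic-type term whose initialization requires $\rho_0^\ma-1 = O(\ma^2)$, not merely $O(\ma)$. Collecting the three bounds concludes the proof with a constant $C$ independent of $\ma$.
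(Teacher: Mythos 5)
Your proof is correct and follows essentially the same route as the paper: sum the local entropy inequality \eqref{sch_loc_ener} over time steps, then bound the three initial contributions using Lemma \ref{lmm:sch:init} (boundedness of $\rho^{-1}$ and the $\mathcal{O}(\ma^2)$ bounds on $\rho^0-1$ and $\gradi p^0$), the trace inequality for the face-averaged initial velocity, and the upper bound \eqref{upperPi} on $\Pi_\gamma$. Your closing observation about why the $\ma^{-4}$ pressure-gradient term forces the well-preparedness assumption is exactly the point the paper is making implicitly.
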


\begin{proof}
Summing \eqref{sch_loc_ener} over $n$ yields the expected inequality \eqref{eq:corr_stap_bis} with
\[
 C= \frac 1 2 \sum_{\edge\in \edgesint} |D_\edge| \ \rho^{-1}_\Ds\ |\bfu^0_\edge|^2  +\frac 1 {\ma^2} \sum_{K\in \mesh}|K| \, \Pi_\gamma(\rho^0_K)
+ \frac {\delta t^2}{2\ma^4} \sum_{\edge\in\edgesint} \frac{|D_\edge|}{\rho^{-1}_\Ds}\ |(\gradi p^0)_\edge|^2.
\]
Let us prove that if the initial data is well-prepared in the sense of \eqref{eq:u0_rho0_wp}, then for $\ma$ small enough, $C$ is uniformly bounded independently of $\ma$. 
By Lemma \ref{lmm:sch:init}, $\rho_K^{-1}$ is bounded for all $K\in\mesh$ for $\ma$ small enough and therefore so is $\rho_\Ds^{-1}$ for all $\edge\in\edgesint$. 
Hence, since $\bfu_0^\ma$ is uniformly bounded in $\xH^1(\Omega)^d$ by \eqref{eq:u0_rho0_wp}, a classical trace inequality yields the boundedness of the first term. By \eqref{eq:sch:init}, one has $|\rho_K^0-1|\leq C\ma^2$ for all $K\in\mesh$. 
Hence, by \eqref{upperPi}, the second term vanishes as $\ma\to 0$. The third term is also uniformly bounded with respect to $\ma$ thanks to \eqref{eq:sch:init}.
\end{proof}

\begin{lemma}[Control of the pressure] \label{lmm:sch:controle:p}
Let $\ma>0$ and assume that the initial data $(\rho_0^\ma,\bfu_0^\ma)$ is well-prepared in the sense of Definition \ref{defi_u0_rho0}, \ie\ satisfies \eqref{eq:u0_rho0_wp}.
By Lemma \ref{lmm:loc_ener_sch}, for $\ma$ small enough, there exists a solution $(\rho^n,\bfu^n)_{0\leq n\leq N}$ to the scheme \eqref{eq:implicit_scheme}.
Let $p^n=\eos(\rho^n)$ and define $\deltap^n = \lbrace \deltap_K^n,\,K\in\mesh \rbrace $ where $\deltap_K^n = (p_K^n-m(p^n))/\ma^2$ with $m(p^n)$ the mean value of $p^n$ over $\Omega$ (\emph{i.e.} $m(p^n) = |\Omega|^{-1}\sum_{K\in\mesh} |K|\ p_K^n$).
Then, one has, for $1 \leq n \leq N$:
\begin{equation} \label{eq:sch:controle:p}
\norm{\deltap^n} \leq C_{\disc,\delta t},
\end{equation}
where  the real number $C_{\disc,\delta t}$ depends on the mesh and the time step but not on $\ma$, and $\norm{\cdot}$ stands for any norm on the space of discrete functions.
\end{lemma}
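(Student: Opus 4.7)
The plan is to follow the same strategy as in the proof of Lemma \ref{lmm:icontrole:p} for the implicit scheme, with the observation that although the pressure correction algorithm splits the momentum balance into two sub-steps, the sum of the prediction step \eqref{eq:sch_mom} and the correction step \eqref{eq:sch_cor} produces a single ``composite'' momentum balance that is structurally identical to \eqref{eq:isch_mom}, namely
\[
\dfrac 1 {\delta t} \bigl(\rho^{n-1}_\Ds \bfu^n_\edge - \rho^{n-2}_\Ds \bfu_\edge^{n-1} \bigr)
+ \divv(\rho^{n-1} \bfu^{n-1} \otimes \tilde\bfu^n)_\edge
- \divv(\bftau(\tilde\bfu^n))_\edge
+ \dfrac{1}{\ma^2}(\gradi p^n)_\edge = 0,
\]
for all $\edge\in\edgesint$. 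This is the relation on which I would build the estimate.

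Invoking the inf-sup property (Lemma \ref{lmm:inf-sup}), I would then select $\bfv\in\xbfH_{\edges,0}(\Omega)$ with $\norm{\bfv}_\brok=1$ such that $\beta\, \norm{\deltap^n}_{L^2(\Omega)} \leq \sum_{K\in\mesh} |K|\, p_K^n\, \dive(\bfv)_K$. Using the discrete gradient-divergence duality \eqref{eq:grad-div} to convert the right-hand side into $-\sum_\edge |D_\edge|\, \bfv_\edge \cdot (\gradi p^n)_\edge$, and taking the scalar product of the composite momentum balance with $|D_\edge|\bfv_\edge$ summed over $\edge\in\edgesint$, I obtain $\beta \norm{\deltap^n}_{L^2(\Omega)} \le \ma^2 (|T^n_1| + |T^n_2| + |T^n_3|)$, where $T^n_1, T^n_2, T^n_3$ are respectively the discrete time-derivative, convection and diffusion terms tested against $\bfv$.

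The remaining task is to bound each $T^n_i$ by a constant independent of $\ma$ (but possibly depending on the fixed discretization). This is where the global entropy estimate \eqref{eq:corr_stap_bis} enters: it provides a uniform bound on $\norm{\tilde\bfu^k}_\brok$ for all $k\le n$, a uniform bound on $\sum_K |K|\, \Pi_\gamma(\rho^k_K)/\ma^2$, and $L^1$ control of $\rho^k$ by conservativity of \eqref{eq:sch_mass}. The main (minor) subtlety will be to control the true velocity $\bfu^n$ (not just $\tilde\bfu^n$) in, say, an $L^2$-type discrete norm: combining Proposition \ref{conve_rho_continuous} at the discrete level with the lower bounds of Lemma \ref{lem:pi} on $\Pi_\gamma$ shows that for $\ma$ small enough the density $\rho^{n-1}_\Ds$ is bounded below by a positive constant uniformly in $\ma$, so the kinetic energy term in \eqref{eq:corr_stap_bis} yields a uniform bound on $\bfu^n$ in $L^2$; alternatively, one can use the correction step \eqref{eq:sch_cor} to express $\bfu^n$ in terms of $\tilde\bfu^n$ and $\ma^{-2}(\gradi p^n - \overline{\gradi p}^{n-1})$, the latter being controlled by the last (``acoustic'') term in \eqref{eq:corr_stap_bis}. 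A norm equivalence argument on the fixed finite-dimensional space then yields a bound $|T^n_1|+|T^n_2|+|T^n_3|\le C_{\disc,\delta t}/\ma^2$ times $\norm{\bfv}_\brok = 1$, and the $\ma^2$ factor cancels the one in front of the $T^n_i$, giving the desired estimate \eqref{eq:sch:controle:p}.

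For the initialization (the case $n=1$), the same argument applies using Lemma \ref{lmm:sch:init} and the well-preparedness assumption \eqref{eq:u0_rho0_wp}, which ensure that $\rho^{-1}$, $\rho^0$ are uniformly close to $1$ and $\bfu^0$ is uniformly bounded in a discrete $\xH^1$-type norm via the trace inequality applied to $\bfu_0^\ma\in \xH^1_0(\Omega)^d$. The main obstacle I foresee is in fact purely bookkeeping: keeping track of the various time levels that appear in the composite momentum balance (the density lives at levels $n-1$ and $n-2$, the convective velocity at $n-1$, the predicted velocity at $n$, and the pressure at $n$) and ensuring that the control of each one is available from \eqref{eq:corr_stap_bis}; no essentially new idea beyond that of the implicit case is required.
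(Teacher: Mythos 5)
Your argument is correct, but it follows a genuinely different route from the paper's. The paper's proof never touches the momentum balance: it exploits the fact that, unlike the implicit scheme, the pressure correction scheme's global entropy inequality \eqref{eq:corr_stap_bis} already carries the acoustic term $\frac{\delta t^2}{2\ma^4}\sum_{\edge}\frac{|D_\edge|}{\rho^{n-1}_\Ds}\,|(\gradi p^n)_\edge|^2\leq C$, which on a fixed mesh (with $\rho^{n-1}_\Ds$ bounded above by mass conservation) gives $|(\gradi p^n)_\edge|\leq C_{\disc,\delta t}\,\ma^2$ outright; a norm-equivalence argument then bounds $\gradi(\deltap^n)$ in the discrete $\xH^{-1}$ norm, and the duality \eqref{eq:grad-div} together with the inf-sup condition of Lemma \ref{lmm:inf-sup} turns that into the $\xL^2$ bound on $\deltap^n$. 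You instead recombine \eqref{eq:sch_mom} and \eqref{eq:sch_cor} into a composite momentum balance (correct: the $(\overline{\gradi p})^{n-1}$ terms cancel) and rerun the implicit-scheme argument of Lemma \ref{lmm:icontrole:p}; all the ingredients you invoke are indeed available --- $\norm{\tilde\bfu^n}_\brok$ from the viscous term of \eqref{eq:corr_stap_bis}, $\rho$ in $\xL^1$ from conservativity, and $\bfu^n$ in $\xL^2$ from the kinetic term once the density is bounded below for $\ma$ small. The paper's route is shorter and does not degenerate as $\mu\to 0$ (your bound on $\norm{\tilde\bfu^n}_\brok$ does, although the acoustic-term workaround you mention repairs this); yours has the merit of being uniform with the implicit case. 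One bookkeeping slip worth fixing: testing the composite balance against the inf-sup field $\bfv$ gives $\beta\,\norm{\deltap^n}_{\xL^2}\leq |T_1^n|+|T_2^n|+|T_3^n|$ with the $T_i^n$ bounded independently of $\ma$; your spurious factor $\ma^2$ and the compensating claim $|T_i^n|\leq C_{\disc,\delta t}/\ma^2$ cancel each other, so the conclusion stands, but as written these two statements combined with the true ($\ma$-independent) bound on the $T_i^n$ would wrongly yield $\deltap^n\to 0$.
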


\begin{proof}
According to \eqref{eq:corr_stap_bis}, the discrete pressure gradient is controlled in $\xL^\infty$ by $C_{\disc,\delta t}\, \ma^2$ where $C_{\disc,\delta t}$ is a mesh-dependent constant independent of $\ma$. Hence, by a finite-dimensional argument $\gradi(\delta p^n)$ is controlled in any norm by some constant independent of $\ma$.
In particular, on has 
$\norm{\gradi(\delta p^n)}_{-1,\disc} \leq C_{\disc,\delta t}$, where $\norm{\,.\,}_{-1,\disc}$ is the discrete $(\xH^{-1})^d$-norm defined for $\bfv\in\xbfH_{\edges,0}(\Omega)$ by:
\[
\norm{\bfv}_{-1,\disc} = \sup \Big \lbrace   \sum_{\edge\in\edges} |D_\edge|\ \bfv_\edge \cdot \bfw_\edge,
\quad \text{with } \bfw\in\xbfH_{\edges,0}(\Omega) \text{ such that }\norm{\bfw}_{1,\disc}\leq 1\Big \rbrace.
\]
Invoking the gradient divergence duality \eqref{eq:grad-div} and the \emph{inf-sup} stability of the scheme (see Lemma \ref{lmm:inf-sup}), $\norm{\gradi(\delta p^n)}_{-1,\disc} \leq C_{\disc,\delta t}$ implies that $\norm{\deltap^n}_{\xL^2} \leq \beta^{-1} C_{\disc,\delta t}$.
\end{proof}
%
%
\subsection{Incompressible limit of the pressure correction scheme}

We may now state the main result of this section which is the convergence, up to a subsequence, of the solution to the compressible scheme \eqref{eq:correction_scheme} towards the solution of a pressure correction \textit{inf-sup} stable scheme when the Mach number tends to zero.

\begin{theorem}[Incompressible limit of the pressure correction scheme]\ \\
\label{Theorem:esch}
Let $(\ma\exm)_{m\in \xN}$ be a sequence of positive real numbers tending to zero, and let $(\rho\exm,\bfu\exm)$ be a corresponding sequence of solutions of the scheme \eqref{eq:correction_scheme}.
Let us assume that the initial data $(\rho_0^{\ma\exm},\bfu_0^{\ma\exm})$ is well-prepared in the sense of Definition \ref{defi_u0_rho0}, \ie\ satisfies \eqref{eq:u0_rho0_wp} for $m \in \xN$.
Then the sequence $(\rho\exm)_{m \in \xN}$ tends to the constant function $\rho =1$ when $m$ tends to $+ \infty$ in $\xL^\infty((0,T),\xL^\gamma(\Omega))$. Moreover, for all $q\in[1,\min(2,\gamma)]$, there exists $C>0$ such that:
\[
\norm{\rho\exm-1}_{\xL^\infty((0,T);\xL^q(\Omega))} \leq C\ma\exm, \qquad \text{for $m$ large enough}.
\]
In addition, the sequence $(\bfu\exm,\deltap\exm)_{m \in \xN}$ tends, in any discrete norm, to the solution $(\bfu,\deltap)$ of the usual (Rannacher-Turek or Crouzeix-Raviart) pressure correction scheme for the incompressible Navier-Stokes equations, which reads:

\medskip
\begin{minipage}{0.8\textwidth}
Knowing  $\deltap^n\in\xL_\mesh(\Omega)$ and $\bfu^n\in\xbfH_{\edges,0}(\Omega)$, compute $\deltap^{n+1}\in\xL_\mesh(\Omega)$ and $\bfu^{n+1}\in\xbfH_{\edges,0}(\Omega)$ through the following steps:
\end{minipage}
\begin{subequations}\label{eq:limit_correction_scheme}
\begin{align}
\nonumber &
\mbox{{\it Prediction step} -- Solve for $\tilde \bfu^{n+1} \in\xbfH_{\edges,0}(\Omega)$:}
\\[2ex] \label{eq:limit_sch_mom} & \quad
\dfrac 1 {\delta t}\ \bigl(\tilde \bfu^{n+1}_\edge-\bfu_\edge^n \bigr)
+ \divv(\bfu^n \otimes  \tilde \bfu^{n+1} )_\edge
- \divv (\bftau(\tilde \bfu^{n+1}))_\edge
+ (\gradi \deltap^n)_\edge
=0, & \forall \edge \in \edgesint.
\\[3ex] \nonumber &
\mbox{{\it Correction step} -- Solve for $\deltap^{n+1}\in\xL_\mesh(\Omega)$ and $\bfu^{n+1}\in\xbfH_{\edges,0}(\Omega)$:}
\\[1ex]  \label{eq:limit_sch_cor}  
& \quad
\dfrac 1 {\delta t}\  (\bfu^{n+1}_\edge-\tilde \bfu_\edge^{n+1})
+ (\gradi \deltap^{n+1})_\edge- (\gradi \deltap^n)_\edge
=0, & \forall \edge \in \edgesint.
\\[2ex] \label{eq:limit_sch_mass} & \quad
\dive(\bfu^{n+1})_K = 0, & \forall K \in \mesh.
\end{align}
\end{subequations}
\end{theorem}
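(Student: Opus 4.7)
The plan is to mimic the four-step argument used for Theorem \ref{Theorem:imp}, adapting each step to the three-step structure of the pressure correction algorithm. First I would establish the convergence of the density. The global entropy estimate \eqref{eq:corr_stap_bis} controls $(\varepsilon^{(m)})^{-2} \sum_{K\in\mesh}|K|\,\Pi_\gamma(\rho^{(m),n}_K)$ uniformly in $m$ and $n$, since the well-prepared assumption \eqref{eq:u0_rho0_wp} is strictly stronger than the ill-prepared one \eqref{eq:u0_rho0_ip}. Combining this with the lower bounds on $\Pi_\gamma$ from Lemma \ref{lem:pi} and repeating almost verbatim the argument of Propositions \ref{conve_rho_continuous_0} and \ref{conve_rho_continuous} (with discrete sums replacing integrals) yields both $\rho^{(m)}\to 1$ in $\xL^\infty((0,T);\xL^\gamma(\Omega))$ and the quantitative $\xL^q$ rate claimed in the theorem.

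Next I would extract convergent subsequences for the velocities and pressures. From \eqref{eq:corr_stap_bis} I get a uniform bound on $\sum_n \delta t\,\norm{\tilde\bfu^{(m),n}}_\brok^2$; since the mesh and time step are fixed, a finite-dimensional norm equivalence argument gives a uniform bound on $(\tilde\bfu^{(m),n})_m$ in any discrete norm. Using the correction equation \eqref{eq:sch_cor} together with the pressure gradient bound also supplied by \eqref{eq:corr_stap_bis}, the same holds for $(\bfu^{(m),n})_m$. Lemma \ref{lmm:sch:controle:p} provides the analogous bound for $(\deltap^{(m),n})_m$ via the inf-sup condition. Bolzano--Weierstrass then yields, after extracting a subsequence, a limit $(\tilde\bfu,\bfu,\deltap)$ in any discrete norm.

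I would then pass to the limit in the scheme by induction on the time index $n$, using the fact that on the fixed mesh all norms are equivalent, so $\rho^{(m),n}_K\to 1$ and $\rho^{(m),n}_{\Ds}/\rho^{(m),n-1}_{\Ds}\to 1$ cell by cell. In the scaling step \eqref{eq:sch_scale}, writing $(\gradi p^{(m),n})_\edge = (\varepsilon^{(m)})^2 (\gradi\deltap^{(m),n})_\edge$ gives
\[
\frac{1}{(\varepsilon^{(m)})^2}(\overline{\gradi p})^{(m),n}_\edge = \Bigl(\frac{\rho^{(m),n}_{\Ds}}{\rho^{(m),n-1}_{\Ds}}\Bigr)^{1/2} (\gradi\deltap^{(m),n})_\edge \longrightarrow (\gradi\deltap^n)_\edge.
\]
Cell-by-cell passage to the limit in \eqref{eq:sch_mom} then reproduces \eqref{eq:limit_sch_mom}, and the same in \eqref{eq:sch_cor} reproduces \eqref{eq:limit_sch_cor}. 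In the mass balance \eqref{eq:sch_mass}, the accumulation term $(\rho^{(m),n+1}_K-\rho^{(m),n}_K)/\delta t$ tends to $0$ and the upwind flux $\rho^{(m),n+1}_\edge\,\bfu^{(m),n+1}_\edge$ tends to $\bfu^{n+1}_\edge$, yielding \eqref{eq:limit_sch_mass}. Finally, the limit algorithm \eqref{eq:limit_correction_scheme} has a unique solution at each time step (the prediction is a linear coercive elliptic problem, since $\bfu^n$ is frozen and divergence-free by the induction; the correction reduces to a discrete Poisson equation for $\deltap^{n+1}-\deltap^n$ uniquely solvable up to an additive constant, which is fixed by the zero-mean convention used in Lemma \ref{lmm:sch:controle:p}), so every accumulation point coincides with this unique solution and the full sequence converges.

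The main subtlety, compared to the implicit case, will be the tight coupling between the scaling step and the order of vanishing of $\gradi p^{(m),n}$: one needs $(\gradi p^{(m),n})/(\varepsilon^{(m)})^2$ to remain bounded in order for the rescaled gradient $(\overline{\gradi p})^{(m),n}/(\varepsilon^{(m)})^2$ to have a limit. The last term in \eqref{eq:corr_stap_bis} delivers exactly this control, but only if the initial pressure gradient is itself $O(\varepsilon^2)$, which is guaranteed by Lemma \ref{lmm:sch:init} under well-preparedness. This explains why ill-prepared data, which sufficed in Section \ref{sec:impl}, is not enough here: without the $O(\varepsilon^2)$ control on $\gradi p^{(m),0}$ and the $O(\varepsilon)$ control on $\dive\bfu_0^\varepsilon$, the initial value $\rho^{(m),-1}$ may fail to be positive and the rescaled pressure gradient may fail to have a limit.
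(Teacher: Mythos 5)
Your proposal is correct and follows essentially the same route as the paper: density convergence from the global entropy estimate \eqref{eq:corr_stap_bis} combined with Lemma \ref{lem:pi}, uniform bounds on $(\bfu\exm,\deltap\exm)$ plus Bolzano--Weierstrass and norm equivalence, cell-by-cell passage to the limit in \eqref{eq:correction_scheme}, and uniqueness of the solution of the limit scheme to upgrade subsequential to full convergence. The only cosmetic difference is that the paper bounds $\bfu\exm$ directly from the kinetic-energy part of \eqref{eq:corr_stap_bis}, using $\rho\exm\to1$ to obtain a mesh-dependent lower bound $\rho_{{\rm min},\disc}$ on the density, which is slightly more direct than your detour through the dissipation term and the correction equation and is the argument that carries over to the inviscid case.
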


\begin{proof}
The proof of the convergence of $(\rho\exm)_{m \in \xN}$ towards $\rho =1$ when $m$ tends to $+ \infty$ is the same as in the continuous case. It follows from the combination of the lower bounds on $\Pi_\gamma(\rho)$ proven in Lemma \ref{lem:pi} and the global entropy estimate \eqref{eq:corr_stap_bis}.

Since $\rho\exm\to 1$ as $m\to+\infty$, the discrete density unknowns tend to $1$ in every cell. Hence, by the kinetic energy part of \eqref{eq:corr_stap_bis}, there exists a minimum density $\rho_{{\rm min},\disc}>0$ depending on the fixed mesh such that $\norm{\bfu\exm}_{\xL^2(\Omega)^d}\leq \sqrt{\frac{2C}{\rho_{{\rm min},\disc}}}$ for all $m\in\xN$. Moreover, the sequence $(\deltap\exm)_{m \in \xN}$ is also bounded by Lemma \ref{lmm:sch:controle:p}.
By the Bolzano-Weierstrass theorem and a norm equivalence argument, there exists a subsequence of $(\bfu\exm,\deltap\exm)_{m \in \xN}$ which tends, in any discrete norm, to a limit $(\bfu,\deltap)$.
Passing to the limit cell-by-cell in \eqref{eq:correction_scheme}, one obtains that $(\bfu,\deltap)$ is a solution to \eqref{eq:limit_correction_scheme}.
Since this solution is unique, the whole sequence converges, which concludes the proof.
\end{proof}
\begin{remark}[Control of the velocity and extension to the Euler equations]
As for the implicit scheme, one can extend the result of Theorem \ref{Theorem:esch} to the inviscid case where $\mu=\lambda=0$. Indeed, the bound on the sequence of velocities is obtained thanks to the control by the estimate \eqref{eq:corr_stap_bis} of the kinetic energy part of the global entropy, invoking the convergence for the densities towards 1.
 \end{remark}

%
\section{Asymptotic analysis of the zero Mach limit for a semi-implicit scheme} \label{sec:proj2}

Another interesting semi-implicit scheme for low viscosity flows (typically a viscosity $ \mu$ which is of the same order of magnitude as the space step $h_\disc$) is a scheme, where in the momentum equation, only the pressure is treated in an implicit way (which is mandatory for stability reasons). 

\medskip
Let $\delta t>0$ be a constant time step. 
The approximate solution $(\rho^n,\bfu^n)\in\xL_\mesh(\Omega)\times\xbfH_{\edges,0}(\Omega)$ at time $t_n=n\delta t$ for $1\leq n\leq N=\Ent{T/\delta t}$ is computed through the following algorithm:

\medskip
\begin{minipage}{0.9\textwidth}
Knowing $(\rho^n,\bfu^n)\in\xL_\mesh(\Omega)\times\xbfH_{\edges,0}(\Omega)$, solve for $\rho^{n+1}\in\xL_\mesh(\Omega)$ and $\bfu^{n+1}\in\xbfH_{\edges,0}(\Omega)$:
\end{minipage}
\begin{subequations}\label{eq:explicit_scheme}
\begin{align} \label{eq:esch_mass} & 
\dfrac 1 {\delta t}(\rho^{n+1}_K-\rho^n_K) + \dive(\rho^{n+1} \bfu^{n+1})_K = 0, &\forall K \in \mesh,
\\[2ex] \label{eq:esch_mom} &
\dfrac 1 {\delta t}\ \bigl(\rho^n_\Ds \bfu^{n+1}_\edge-\rho^{n-1}_\Ds \bfu_\edge^n \bigr)
+ \divv(\rho^n \bfu^n \otimes \bfu^n)_\edge^{\rm up}
- \divv (\bftau(\bfu^n))_\edge
+ \dfrac{1}{\ma^2}\,(\gradi p^{n+1})_\edge
=0, & \forall \edge \in \edgesint.
\end{align}
\end{subequations}

\medskip
Computing $\bfu^{n+1}_\edge$ from the second equation and inserting in the first one yields a nonlinear diffusion-convection-reaction problem on the density $\rho^{n+1}$ or the pressure $p^{n+1}$.
This algorithm may also equivalently be set under the form of a pressure-correction scheme:
\begin{subequations}\label{eq:correction_e_scheme}
\begin{align} \nonumber &
\mbox{{\it Prediction step} -- Compute $\tilde \bfu^{n+1} \in\xbfH_{\edges,0}(\Omega)$ by:}
\\[2ex] \label{eq:e_sch_mom} & \quad
\dfrac 1 {\delta t}\ \bigl(\rho^n_\Ds \tilde \bfu^{n+1}_\edge-\rho^{n-1}_\Ds \bfu_\edge^n \bigr)
+ \divv(\rho^n  \bfu^n \otimes \bfu^n)_\edge^{\rm up}
- \divv (\bftau(\bfu^n))_\edge
=0, & \forall \edge \in \edgesint.
\\[3ex] \nonumber &
\mbox{{\it Correction step} -- Solve for $\rho^{n+1}\in\xL_\mesh(\Omega)$ and $\bfu^{n+1}\in\xbfH_{\edges,0}(\Omega)$:}
\\[1ex] \label{eq:e_sch_cor} & \quad
\dfrac 1 {\delta t}\ \rho^n_\Ds\ (\bfu^{n+1}_\edge-\tilde \bfu_\edge^{n+1})
+ \dfrac{1}{\ma^2}\,(\gradi p^{n+1})_\edge
=0, & \forall \edge \in \edgesint.
\\[2ex] \label{eq:e_sch_mass} & \quad
\dfrac 1 {\delta t}(\rho^{n+1}_K-\rho^n_K) + \dive(\rho^{n+1} \bfu^{n+1})_K = 0, & \forall K \in \mesh.
\end{align}
\end{subequations}

\medskip
The $\xL^2$-stability of this scheme is expected to be ensured under a CFL restriction on the time step of the form $\delta t \leq c(h_\disc/|\bfu|+h_\disc^2/\mu)$, provided that an upwind space discretization be used in the convection term of the momentum balance equation.
Hence, for the semi-implicit scheme \eqref{eq:explicit_scheme}, this convection term is defined as:
\begin{equation} \label{eq:div_conv_up}
\divv(\rho \bfu \otimes \bfv)_\edge^{\rm up} = \sum_{\edged\in\edgesd(D_\edge)} \fluxd(\rho,\bfu)\ \bfv_\edged, \qquad  \forall \edge \in \edgesint,
\end{equation}
where the dual mass fluxes $\fluxd(\rho,\bfu)$ are defined as previously in Section \ref{sec:vel_conv_op}, and where the approximation of the convected velocity on a dual face $\edged=D_\edge|D_\edge'$ is computed with the upwind technique: $\bfv_\edged =\bfv_\edge$ if $\fluxd(\rho,\bfu)\geq 0$, $\bfv_\edged = \bfv_{\edge'}$ otherwise.

\medskip
The initial data $(\rho_0^\ma,\bfu_0^\ma)$ is assumed to satisfy $\rho_0^\ma >0$, $\rho_0^\ma \in \xL^\infty(\Omega)$, $\bfu_0^\ma \in \xH_0^1(\Omega)^d$ and 
\begin{equation} \label{eq:u0_rho0_wp_exp}
\norm{\bfu_0^\ma}_{\xH^1(\Omega)^d}  \ + \frac{1}{\ma}\,\norm{\dive \,\bfu_0^\ma}_{\xL^2(\Omega)} \ + \frac{1}{\ma}\,\norm{\rho^\ma_0- 1}_{\xL^\infty(\Omega)}  \leq  C,
\end{equation}
for a real number $C$ independent of $\ma$. 
Moreover, we suppose that $\bfu_0^\ma$ converges in $\xL^2(\Omega)^d$ towards a function $\bfu_0\in\xL^2(\Omega)^d$.
Note that these hypotheses are less restrictive than assuming well-prepared initial data since the density is here assumed to be close to $1$ with an $\ma$ rate (and not $\ma^2$).

\medskip
The initialization of the scheme \eqref{eq:explicit_scheme} is performed similarly to that of the pressure-correction scheme.
First, $\rho^0$ and $\bfu^0$ are given by the average of the initial conditions $\rho_0^\ma$ and $\bfu_0^\ma$ respectively on the primal cells and on the faces of the primal cells; then we compute $\rho^{-1}$ by solving the mass balance equation \eqref{eq:esch_mass} for $n=-1$, where the unknown is $\rho^{-1}$ and not $\rho^0$.
As in the case of the pressure correction scheme, one may prove that the discrete densities $\rho_K^{-1}$, $K\in\mesh$, are close to $1$ for $\ma$ small enough and therefore positive and bounded.
%
%
\subsection{A priori estimates}

The discrete renormalization identities are again valid for the scheme \eqref{eq:explicit_scheme} , thanks to the fact that the mass balance \eqref{eq:esch_mass} is satisfied. The proof is identical to that of Lemmas \ref{lem:i_renorm} and \ref{lem:sch_cor_remorm}.

\begin{lemma}[Discrete renormalization property] \label{lem:expl_remorm}
A solution to the system \eqref{eq:explicit_scheme} satisfies for all $K\in\mesh$ and $0\leq n \leq N-1$ the two following identities:
\begin{align} & \label{eq:expl_renorm}
\dfrac 1 {\delta t} \Big ( \psi_\gamma(\rho^{n+1}_K)-\psi_\gamma(\rho^n_K) \Big ) + \dive(\psi_\gamma(\rho^{n+1}) \bfu^{n+1})_K 
+  \eos(\rho_K^{n+1}) \, \dive( \bfu^{n+1})_K + R_K^{n+1} = 0, 
\\ & \nonumber
\dfrac 1 {\delta t} \Big ( \Pi_\gamma(\rho^{n+1}_K)-\Pi_\gamma(\rho^n_K) \Big )
+ \dive\Big(\big(\psi_\gamma(\rho^{n+1}) - \psi_\gamma'(1)\,\rho^{n+1}\big)\, \bfu^{n+1} \Big)_K 
\\ & \label{eq:expl_renorm:Pi} \hspace{50ex}
+ \eos(\rho_K^{n+1}) \, \dive( \bfu^{n+1})_K + R_K^{n+1} = 0,
\end{align}
where $R_K^{n+1}$ has the same expression as in Lemma \ref{lem:i_renorm}.
\end{lemma}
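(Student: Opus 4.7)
The plan is to observe that the two renormalization identities \eqref{eq:expl_renorm} and \eqref{eq:expl_renorm:Pi} are purely consequences of the discrete mass balance equation \eqref{eq:esch_mass}, together with the upwind structure of the primal mass flux $F_{K,\edge}(\rho^{n+1},\bfu^{n+1})$; they do not involve the momentum balance at all. Since \eqref{eq:esch_mass}, viewed as an equation on $(\rho^{n+1},\bfu^{n+1})$, is structurally identical to \eqref{eq:isch_mass} and to \eqref{eq:sch_mass} (same backward-Euler discretization of $\partial_t\rho$, same staggered divergence, and same upwind choice for $\rho_\edge^{n+1}$), the derivations of Lemmas \ref{lem:i_renorm} and \ref{lem:sch_cor_remorm} transpose verbatim. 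The key remark is that the explicit character of the convection and diffusion terms in the momentum balance \eqref{eq:esch_mom} plays no role whatsoever in deriving the renormalized mass balance.

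For \eqref{eq:expl_renorm}, I would first multiply \eqref{eq:esch_mass} by $\psi_\gamma'(\rho_K^{n+1})$. The time-derivative contribution is rewritten by a second-order Taylor expansion of $\psi_\gamma$ as $(\psi_\gamma(\rho_K^{n+1})-\psi_\gamma(\rho_K^n))/\delta t$ plus the non-negative remainder $\frac{1}{2\delta t}\psi_\gamma''(\bar\rho_K^{n+1/2})(\rho_K^{n+1}-\rho_K^n)^2$. The flux contribution $\psi_\gamma'(\rho_K^{n+1})\,|\edge|\,\rho_\edge^{n+1}\,\bfu_\edge^{n+1}\cdot\bfn_{K,\edge}$ is then treated by substituting the upwind value of $\rho_\edge^{n+1}$; depending on the sign of $\bfu_\edge^{n+1}\cdot\bfn_{K,\edge}$, a second Taylor expansion produces the convective flux $\psi_\gamma(\rho^{n+1})\bfu^{n+1}$, the pressure divergence contribution $\eos(\rho_K^{n+1})\dive(\bfu^{n+1})_K$ (using the identity $\rho\psi_\gamma'(\rho)-\psi_\gamma(\rho)=\eos(\rho)$), and the non-negative upwinding remainder $\frac{1}{2|K|}\sum_{\edge=K|L}|\edge|(\bfu_\edge^{n+1}\cdot\bfn_{K,\edge})^-\psi_\gamma''(\bar\rho_\edge^{n+1})(\rho_L^{n+1}-\rho_K^{n+1})^2$. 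Summing the two remainders yields $R_K^{n+1}$ exactly as in Lemma \ref{lem:i_renorm}.

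For \eqref{eq:expl_renorm:Pi}, I would multiply \eqref{eq:esch_mass} by the constant $-\psi_\gamma'(1)$ and add to \eqref{eq:expl_renorm}. Using the definition $\Pi_\gamma(\rho)=\psi_\gamma(\rho)-\psi_\gamma(1)-\psi_\gamma'(1)(\rho-1)$ and the linearity of the discrete divergence, the time-derivative terms recombine into $(\Pi_\gamma(\rho_K^{n+1})-\Pi_\gamma(\rho_K^n))/\delta t$, the convective fluxes recombine into $\dive((\psi_\gamma(\rho^{n+1})-\psi_\gamma'(1)\rho^{n+1})\bfu^{n+1})_K$, and the constant $-\psi_\gamma'(1)\psi_\gamma(1)/\delta t$ cancels upon differencing. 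The pressure term and the remainder $R_K^{n+1}$ are unaffected.

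The main obstacle here is essentially non-existent: the derivation is algebraic and identical to the fully implicit case because only the mass balance is used. The only verification needed is the bookkeeping check that the semi-implicit treatment of the momentum equation never enters, and that the discrete fluxes satisfy the same upwind structure — both of which are immediate from the scheme definition \eqref{eq:explicit_scheme}. As in Lemma \ref{lem:i_renorm}, the convexity of $\psi_\gamma$ ensures $R_K^{n+1}\ge 0$.
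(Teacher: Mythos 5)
Your proof is correct and follows the same route as the paper, which simply observes that the argument of Lemmas \ref{lem:i_renorm} and \ref{lem:sch_cor_remorm} applies verbatim since only the (structurally identical) discrete mass balance \eqref{eq:esch_mass} and its upwind flux enter the derivation. Your sketch of multiplying by $\psi_\gamma'(\rho_K^{n+1})$, using the identity $\rho\psi_\gamma'(\rho)-\psi_\gamma(\rho)=\eos(\rho)$ and Taylor remainders, and then combining with $-\psi_\gamma'(1)\times$\eqref{eq:esch_mass}, is exactly the computation the paper delegates to its references.
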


\medskip
Let us now turn to the discrete kinetic energy balance which leads to the $\xL^2$ stability of the scheme.
We begin with the following proposition which characterizes the rigidity matrix associated with the discretization of the diffusion term in the momentum equation.

\begin{proposition} \label{defi:rigidity}
Let $\mcal{A}$ be the $d\sharp\edgesint \times d\sharp\edgesint$ rigidity matrix associated with the finite element discretization of the diffusion term.  The matrix $\mcal{A}$ is the block matrix $\mcal{A}=(\mcal{A}_{\edge,\edge'})_{\edge,\edge'\in\edgesint}$, where for $\edge,\edge'\in\edgesint$, $\mcal{A}_{\edge,\edge'}$ is the $d\times d$ matrix defined by:
\begin{equation}\label{eq:exp_A}
\mcal{A}_{\edge,\edge'} = \sum_{K\in\mesh} \Big ( \mu  \int_K \gradi \zeta_\edge \cdot \gradi \zeta_{\edge '} \bfI
+ (\mu+\lambda) \int_K \gradi \zeta_\edge \otimes \gradi \zeta_{\edge'} \Big ),
\end{equation}
and $\mcal{A}$ is a symmetric positive-definite matrix.
\end{proposition}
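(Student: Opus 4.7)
This is essentially a bookkeeping computation: the plan is to identify $\mcal{A}$ as the matrix of the bilinear form associated with the discrete diffusion operator, and then to read off both symmetry and positive-definiteness from the structure of that form. There is no serious analytic obstacle; the only care needed is to restrict the face indices to $\edgesint$ so that the homogeneous Dirichlet condition is properly built in.

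First I would substitute $\hat\bfu(\bfx)=\sum_{\edge'\in\edges} \bfu_{\edge'}\zeta_{\edge'}(\bfx)$ into the definition \eqref{eq:def_diff} of the discrete diffusion operator and regroup. This lets one write
\[
 -|D_\edge|\,\divv(\bftau(\bfu))_\edge \ = \ \sum_{\edge'\in\edgesint}\mcal{A}_{\edge,\edge'}\,\bfu_{\edge'}, \qquad \forall\,\edge\in\edgesint,
\]
where the summation is restricted to $\edgesint$ because $\bfu\in\xbfH_{\edges,0}(\Omega)$, and matching coefficients yields exactly the expression \eqref{eq:exp_A}.

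Symmetry is then immediate from \eqref{eq:exp_A}: the integral $\int_K \gradi\zeta_\edge\cdot\gradi\zeta_{\edge'}$ is symmetric in $(\edge,\edge')$ because it is a scalar product, while $(\gradi\zeta_\edge\otimes\gradi\zeta_{\edge'})^T=\gradi\zeta_{\edge'}\otimes\gradi\zeta_\edge$. Hence $\mcal{A}_{\edge',\edge}=\mcal{A}_{\edge,\edge'}^T$, which is the block-transpose condition characterizing symmetry of the global $d\sharp\edgesint\times d\sharp\edgesint$ matrix $\mcal{A}$.

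For positive-definiteness, I would compute the associated quadratic form $Q(\bfu)=\sum_{\edge,\edge'\in\edgesint}\bfu_\edge^T\mcal{A}_{\edge,\edge'}\bfu_{\edge'}$ on an arbitrary $\bfu\in\xbfH_{\edges,0}(\Omega)$. Expanding the two pieces of \eqref{eq:exp_A} component-wise and using $\hat\bfu=\sum_\edge\bfu_\edge\zeta_\edge$ yields the two algebraic identities
\begin{align*}
\sum_{\edge,\edge'}(\bfu_\edge\cdot\bfu_{\edge'})\,\gradi\zeta_\edge\cdot\gradi\zeta_{\edge'} &= \gradi\hat\bfu:\gradi\hat\bfu, \\
\sum_{\edge,\edge'}\bfu_\edge^T(\gradi\zeta_\edge\otimes\gradi\zeta_{\edge'})\bfu_{\edge'} &= (\dive\hat\bfu)^2,
\end{align*}
the first by recognizing the left-hand side as $\sum_i|\gradi\hat u_i|^2$, the second by collapsing the double sum into $(\sum_\edge\bfu_\edge\cdot\gradi\zeta_\edge)^2$. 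Injecting these into $Q$ and summing over the cells gives
\[
 Q(\bfu) \ = \ \sum_{K\in\mesh}\int_K\Bigl(\mu\,\gradi\hat\bfu:\gradi\hat\bfu+(\mu+\lambda)(\dive\hat\bfu)^2\Bigr) \ \geq \ \mu\,\norm{\bfu}_\brok^2.
\]
Since $\mu>0$ and $\norm{\cdot}_\brok$ is a bona fide norm on $\xbfH_{\edges,0}(\Omega)$ (by the discrete Poincar\'e inequality recalled in Section \ref{sec:disc}, which uses precisely the vanishing condition on $\edgesext$), the form $Q$ is strictly positive for $\bfu\neq 0$, concluding the proof.
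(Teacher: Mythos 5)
Your proof is correct and follows essentially the same route as the paper: both identify $(\mcal{A}\bfu,\bfv)$ with the finite-element bilinear form $\sum_{K}\bigl(\mu\int_K\gradi\hat\bfu:\gradi\hat\bfv+(\mu+\lambda)\int_K(\dive\hat\bfu)(\dive\hat\bfv)\bigr)$ by expanding on the shape-function basis, and conclude positive-definiteness from the coercivity bound $\geq\mu\norm{\bfu}_\brok^2$ together with the fact that $\norm{\cdot}_\brok$ is a norm on $\xbfH_{\edges,0}(\Omega)$. The only difference is cosmetic: the paper starts from the bilinear form and reads off the block entries, whereas you start from the block entries and reassemble the quadratic form.
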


\begin{proof}
Let us identify $\bfu=(\bfu_\edge)_{\edge\in\edgesint}$ and $\bfv=(\bfv_\edge)_{\edge\in\edgesint}$ with vectors in $\xR^{d\sharp\edgesint}$ and denote $(.,.)$ the canonical scalar product in  $\xR^{d\sharp\edgesint}$. The result follows from the elementary computation:
\[ \begin{aligned}
(\mcal{A}\bfu,\bfv) 
& = \sum_{\edge\in\edgesint} \sum_{\edge'\in\edgesint} \mcal{A}_{\edge,\edge'} \bfu_{\edge'} \cdot \bfv_\edge \\ 
& = \sum_{\edge \in \edgesint} |D_\edge|\ \big( - \divv (\bftau(\bfu))_\edge \big ) \cdot \bfv_\edge \\
& = \sum_{K\in\mesh}   \mu  \int_K  \gradi \hat \bfu : \gradi \hat \bfv  + (\mu+\lambda) \int_K (\dive\, \hat\bfu)(\dive\, \hat\bfv).
\end{aligned} \]
The expression \eqref{eq:exp_A} of the block-entries of the matrix $\mcal{A}$ then follows by developping the expression of $\hat\bfu$ and $\hat\bfv$ on the basis of the shape functions and identifying the terms.
\end{proof}

The following lemma states the $\xL^2$-stability of an explicit upwind scheme for the convection-diffusion equation \eqref{eq:e_sch_mom}, under some CFL restriction on the time step.

\begin{lemma}[$\xL^2$-stability of the prediction step] \label{lmm:expl_stab_pred}
Any solution to \eqref{eq:e_sch_mom}, and thus to the explicit scheme \eqref{eq:explicit_scheme}, satisfies the following inequality, for all  $0\leq n \leq N-1$:
\begin{equation} \label{eq:expl_pred}
\dfrac 1 {2\delta t} \sum_{\edge\in\edgesint} |D_\edge|\,\Big ( \rho_\Ds^n\,|\tilde \bfu_\edge^{n+1}|^2 - \rho_\Ds^{n-1}\,|\bfu_\edge^n|^2 \Big) 
+  R_{\edges}^{n+1}\leq 0,
\end{equation}
where, denoting $\varrho(\mcal{A})$ the spectral radius of the matrix $\mcal{A}$, the remainder term  $R_{\edges}^{n+1}$ is given by:
\begin{equation} \label{remainder:CFL}
R_{\edges}^{n+1}= \sum_{\edge\in\edgesint} \Big ( 
\frac{\rho_\Ds^n |D_\edge|}{2 \delta t} 
- \dfrac 12 \sum_{\edged\in\edgesd(D_\edge)}\,\bigl( \fluxd(\rho^n,\bfu^n)\bigr )^{-}
- \dfrac 14 \varrho(\mcal{A})
\Big )\,|\tilde \bfu_\edge^{n+1}-\bfu_\edge^n|^2.
\end{equation}
\end{lemma}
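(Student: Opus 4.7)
The plan is to take the scalar product of the prediction equation \eqref{eq:e_sch_mom} with $|D_\edge|\,\tilde\bfu^{n+1}_\edge$, sum over $\edge\in\edgesint$, and then deal separately with the discrete time derivative plus convection (which together are controlled by a dual mass balance, as in Lemma \ref{lmm:i_ke} and Lemma \ref{lmm:corr_ke}) and with the explicit diffusion term (which is the new difficulty, since the discretization of $\divv(\bftau(\bfu^n))$ is taken at time $n$ but tested against $\tilde\bfu^{n+1}$).

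\medskip
\noindent\emph{Time derivative and convection.} For the time derivative, I would use the algebraic identity
\[
2\,(\rho\, a - \rho^* b)\cdot a = \rho\,|a|^2 - \rho^*|b|^2 + \rho^*|a-b|^2 + (\rho-\rho^*)|a|^2,
\]
with $\rho=\rho_\Ds^n,\ \rho^*=\rho_\Ds^{n-1},\ a=\tilde\bfu^{n+1}_\edge,\ b=\bfu^n_\edge$. This produces the kinetic energy difference, the positive term $\rho_\Ds^{n-1}|\tilde\bfu^{n+1}_\edge-\bfu^n_\edge|^2/(2\delta t)$, and a reaction-like term $(\rho_\Ds^n-\rho_\Ds^{n-1})|\tilde\bfu^{n+1}_\edge|^2/(2\delta t)$, which I would rewrite using the dual mass balance \eqref{eq:mass_D} (at time $n$) as $-(2|D_\edge|)^{-1}\sum_\edged F_{\edge,\edged}(\rho^n,\bfu^n)\,|\tilde\bfu^{n+1}_\edge|^2$. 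Combining this with the upwind convection term \eqref{eq:div_conv_up} and invoking the conservativity (H2) to pair the dual faces $\edged=D_\edge|D_{\edge'}$, each pair contributes
\[
F_{\edge,\edged}\Bigl(\bfu^n_{\edged,\mathrm{up}}\cdot(\tilde\bfu^{n+1}_\edge-\tilde\bfu^{n+1}_{\edge'})\ -\ \tfrac12\bigl(|\tilde\bfu^{n+1}_\edge|^2-|\tilde\bfu^{n+1}_{\edge'}|^2\bigr)\Bigr).
\]
Assuming without loss of generality that $F=F_{\edge,\edged}\ge 0$ (so $\bfu^n_{\edged,\mathrm{up}}=\bfu^n_\edge$), the elementary rearrangement $F\bfu^n_\edge\cdot(\tilde\bfu^{n+1}_\edge-\tilde\bfu^{n+1}_{\edge'})-\frac{F}{2}(|\tilde\bfu^{n+1}_\edge|^2-|\tilde\bfu^{n+1}_{\edge'}|^2)=F(\bfu^n_\edge-\tilde\bfu^{n+1}_\edge)\cdot(\tilde\bfu^{n+1}_\edge-\tilde\bfu^{n+1}_{\edge'})+\frac{F}{2}|\tilde\bfu^{n+1}_\edge-\tilde\bfu^{n+1}_{\edge'}|^2$, followed by Young's inequality on the cross term, yields the bound $-\frac{F}{2}|\bfu^n_\edge-\tilde\bfu^{n+1}_\edge|^2$. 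The symmetric case $F<0$ gives $-\frac{|F|}{2}|\bfu^n_{\edge'}-\tilde\bfu^{n+1}_{\edge'}|^2$. Re-summing per edge, the upwind edge for a pair is the one from which mass flows out, so the total contribution from convection plus the reaction term rearranges into
\[
-\,\sum_{\edge}\frac{1}{2}\Bigl(\sum_\edged F^+_{\edge,\edged}\Bigr)\,|\tilde\bfu^{n+1}_\edge-\bfu^n_\edge|^2.
\]
Using once more the dual mass balance $\sum_\edged F^+_{\edge,\edged}-\sum_\edged F^-_{\edge,\edged}=-|D_\edge|(\rho_\Ds^n-\rho_\Ds^{n-1})/\delta t$, this combines with the positive term $\rho_\Ds^{n-1}|\tilde\bfu^{n+1}_\edge-\bfu^n_\edge|^2/(2\delta t)$ coming from the time derivative to produce exactly the coefficient $\frac{|D_\edge|\rho_\Ds^n}{2\delta t}-\frac12\sum_\edged F^-_{\edge,\edged}$ in the remainder.

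\medskip
\noindent\emph{Explicit diffusion.} Since $\mcal{A}$ is symmetric positive definite by Proposition \ref{defi:rigidity}, the polarization identity with $\bfw^\pm=\tilde\bfu^{n+1}\pm\bfu^n$ gives
\[
(\mcal{A}\bfu^n,\tilde\bfu^{n+1})=\tfrac14(\mcal{A}\bfw^+,\bfw^+)-\tfrac14(\mcal{A}\bfw^-,\bfw^-)\ \ge\ -\tfrac14\,\varrho(\mcal{A})\,|\tilde\bfu^{n+1}-\bfu^n|^2,
\]
where the Euclidean norm $|\cdot|^2=\sum_\edge|\cdot|^2_\edge$ refers to the canonical inner product on $\xR^{d\sharp\edgesint}$, as in Proposition \ref{defi:rigidity}. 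This yields the $-\frac14\varrho(\mcal{A})$ term in the coefficient of $|\tilde\bfu^{n+1}_\edge-\bfu^n_\edge|^2$. Summing all contributions produces \eqref{eq:expl_pred}.

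\medskip
\noindent The main obstacle is the bookkeeping in the convection/reaction part: one must correctly pair dual faces via (H2), apply the upwinding-plus-Young trick on each pair, and then verify that the per-edge re-summation of the resulting $F^+$ weights combines with the $(\rho_\Ds^n-\rho_\Ds^{n-1})$ reaction term coming from the dual mass balance to yield precisely the $F^-$ coefficient stated in \eqref{remainder:CFL}. The diffusion step is comparatively straightforward once the symmetry and positive-definiteness of $\mcal{A}$ are used through the polarization identity rather than a direct Cauchy--Schwarz (which would give only the suboptimal constant $\frac12\varrho(\mcal{A})$).
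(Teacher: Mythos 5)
Your proposal is correct and follows essentially the same route as the paper: scalar product of the prediction equation with $|D_\edge|\,\tilde\bfu^{n+1}_\edge$, the dual mass balance at time $n$ to absorb the reaction term, the upwind-plus-Young trick on the convection (your $F^+$/$F^-$ bookkeeping combined once more with the dual mass balance reproduces exactly the coefficient $\frac{\rho_\Ds^n|D_\edge|}{2\delta t}-\frac12\sum_\edged(\fluxd(\rho^n,\bfu^n))^-$ of \eqref{remainder:CFL}), and a spectral bound for the explicit diffusion. The only cosmetic difference is that you bound $(\mcal{A}\bfu^n,\tilde\bfu^{n+1})$ by the polarization identity whereas the paper uses Cauchy--Schwarz in the $\mcal{A}$-inner product followed by the sharp form $a^2-ab\geq -b^2/4$ of Young's inequality; both yield the same constant $\frac14\varrho(\mcal{A})$.
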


\begin{proof}
Taking the scalar product of the momentum equation \eqref{eq:e_sch_mom} with $|D_\edge|\, \tilde \bfu_\edge^{n+1}$, we obtain $T^{\rm conv}_\edge + T^{\rm diff}_\edge=0$, with:
\[
\begin{aligned} &
T^{\rm conv}_\edge = \Bigl ( \dfrac {|D_\edge|}{\delta t}\ \bigl(\rho^n_\Ds \tilde \bfu^{n+1}_\edge-\rho^{n-1}_\Ds \bfu_\edge^n \bigr)
+ \sum_{\edged\in\edges(D_\edge)} \fluxd(\rho^n,\bfu^n)\,\bfu_\edged^n \Bigr ) \cdot \tilde \bfu_\edge^{n+1},
\\[1ex] &
T^{\rm diff}_\edge = - |D_\edge| \, \divv (\bftau(\bfu^n))_\edge \cdot \tilde \bfu_\edge^{n+1}.
\end{aligned}
\]
For the convection term, the dual density unknowns and mass fluxes are chosen so as to have:
\[
\frac{|D_\edge|}{\delta t} \ (\rho^n_\Ds-\rho^{n-1}_\Ds)+ \sum_{\edged\in\edges(D_\edge)} \fluxd(\rho^n,\bfu^n) = 0, \qquad \forall \edge\in\edgesint,
\]
which necessitates a time-shift in order to exploit the mass balance at the previous time step.
Hence, following the proof of \cite[Lemma A.2]{her-17-con}, we obtain:
\begin{multline*}
T^{\rm conv}_\edge 
= \dfrac {|D_\edge|}{2\delta t} \Big ( \rho_\Ds^n\,|\tilde \bfu_\edge^{n+1}|^2 - \rho_\Ds^{n-1}\,|\bfu_\edge^n|^2\Big ) 
+ \frac 12 \sum_{\edged\in\edges(D_\edge)} \fluxd(\rho^n,\bfu^n) |\bfu_\edged^n|^2 
+ \dfrac {|D_\edge|}{2\delta t} \rho_\Ds^n\,|\tilde \bfu_\edge^{n+1}-\bfu_\edge^n|^2
\\
- \frac 12 \sum_{\edged\in\edges(D_\edge)} \fluxd(\rho^n,\bfu^n) |\bfu_\edged^n-\bfu_\edge^n|^2 
+  \sum_{\edged\in\edges(D_\edge)} \fluxd(\rho^n,\bfu^n) (\bfu_\edged^n-\bfu_\edge^n) \cdot (\tilde \bfu_\edge^{n+1}-\bfu_\edge^n).
\end{multline*}
Thanks to the upwind choice for $\bfu_\edged^n$, the term $\bfu_\edged^n-\bfu_\edge^n$ vanishes whenever the dual flux $\fluxd(\rho^n,\bfu^n)$ is non-negative. Applying Young's inequality to the product in the last term yields:
\begin{multline}
\label{ineq:Tconv}
T^{\rm conv}_\edge 
\geq \dfrac {|D_\edge|}{2\delta t} \Big ( \rho_\Ds^n\,|\tilde \bfu_\edge^{n+1}|^2 - \rho_\Ds^{n-1}\,|\bfu_\edge^n|^2\Big ) 
+ \frac 12 \sum_{\edged\in\edges(D_\edge)} \fluxd(\rho^n,\bfu^n) |\bfu_\edged^n|^2 
+ \dfrac {|D_\edge|}{2\delta t} \rho_\Ds^n\,|\tilde \bfu_\edge^{n+1}-\bfu_\edge^n|^2\\
- \frac 12 \sum_{\edged\in\edges(D_\edge)} \fluxd(\rho^n,\bfu^n)^- |\tilde \bfu_\edge^{n+1}-\bfu_\edge^n|^2.
\end{multline}
For the diffusion term, we observe that:
\[
 \sum_{\edge\in\edgesint} T^{\rm diff}_\edge = (\mcal{A}\bfu^n,\tilde \bfu^{n+1}) = (\mcal{A}\bfu^n,\tilde \bfu^{n+1}-\bfu^n) + (\mcal{A}\bfu^n,\bfu^n).
\]
The Cauchy-Schwarz inequality for the scalar product associated with the real positive symmetric matrix $\mcal{A}$ yields
\[
 (\mcal{A}\bfu^n,\tilde \bfu^{n+1}-\bfu^n) \geq
- (\mcal{A}\bfu^n,\bfu^n)^{\frac 12} \ (\mcal{A}(\tilde \bfu^{n+1}-\bfu^n),\tilde \bfu^{n+1}-\bfu^n)^{\frac 12}.
\]
Applying Young's inequality yields:
\begin{equation} \label{ineq:Tdiff}
\sum_{\edge\in\edgesint} T^{\rm diff}_\edge 
\, \geq \, - \frac 14 (\mcal{A}(\tilde \bfu^{n+1}-\bfu^n),\tilde\bfu^{n+1}-\bfu^n)
\, \geq \, - \frac 14 \varrho(\mcal{A}) \, \sum_{\edge\in\edgesint} |\tilde \bfu_\edge^{n+1}-\bfu_\edge^n|^2.
\end{equation}
By the conservativity of the dual fluxes, taking the sum of \eqref{ineq:Tconv} over $\edge\in\edgesint$ and summing the result with \eqref{ineq:Tdiff} yields the expected inequality \eqref{eq:expl_pred}.
\end{proof}

We are now in position to state the following local-in-time kinetic energy balance.

\begin{lemma}[Discrete kinetic energy balance] \label{lmm:expl_ke}
Any solution to the explicit scheme \eqref{eq:explicit_scheme} satisfies the following inequality, for all  $0\leq n \leq N-1$:
\begin{multline} \label{eq:expl_ke}
\dfrac 1 {2\delta t} \sum_{\edge\in\edgesint} |D_\edge|\,\Big ( \rho_\Ds^n\,|\bfu_\edge^{n+1}|^2 - \rho_\Ds^{n-1}\,|\bfu_\edge^n|^2 \Big) 
+ \frac{1}{\ma^2} \sum_{\edge\in\edgesint} |D_\edge|\,(\gradi p^{n+1})_\edge \cdot \bfu_\edge^{n+1}
\\
+ \frac{\delta t}{2\,\ma^4} \sum_{\edge\in \edgesint}  \frac{|D_\edge|}{\rho^n_\Ds}\,\bigl|(\gradi p^{n+1})_\edge \bigr|^2
+ R_{\edges}^{n+1}
\leq 0,
\end{multline}
where $R_{\edges}^{n+1}$ is defined in Lemma \ref{lmm:expl_stab_pred}.
\end{lemma}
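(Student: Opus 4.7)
The approach is to combine the $\xL^2$-stability estimate of the prediction step \eqref{eq:expl_pred} with an identity obtained from the correction step \eqref{eq:e_sch_cor}, using the same squaring trick already employed in the proof of Lemma \ref{lmm:corr_ke} for the pressure-correction scheme of Section \ref{sec:proj1}.

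First, I would rewrite the correction step \eqref{eq:e_sch_cor} in the symmetric form
\[
\Bigl(\frac{\rho^n_\Ds}{\delta t}\Bigr)^{1/2} \bfu^{n+1}_\edge + \Bigl(\frac{\delta t}{\rho^n_\Ds}\Bigr)^{1/2} \frac{1}{\ma^2} (\gradi p^{n+1})_\edge = \Bigl(\frac{\rho^n_\Ds}{\delta t}\Bigr)^{1/2} \tilde \bfu^{n+1}_\edge, \qquad \forall \edge \in \edgesint.
\]
Taking the squared Euclidean norm on both sides yields the pointwise identity
\[
\frac{\rho^n_\Ds}{\delta t} |\bfu^{n+1}_\edge|^2 + \frac{2}{\ma^2} \bfu^{n+1}_\edge \cdot (\gradi p^{n+1})_\edge + \frac{\delta t}{\rho^n_\Ds\,\ma^4} \bigl|(\gradi p^{n+1})_\edge\bigr|^2 = \frac{\rho^n_\Ds}{\delta t} |\tilde \bfu^{n+1}_\edge|^2.
\]

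Next, I would multiply this identity by $|D_\edge|/2$, sum over $\edge \in \edgesint$, and subtract $\frac{1}{2\delta t} \sum_\edge |D_\edge|\, \rho_\Ds^{n-1} |\bfu_\edge^n|^2$ from both sides. This produces
\[
\begin{aligned}
&\dfrac 1 {2\delta t} \sum_{\edge\in\edgesint} |D_\edge|\,\bigl( \rho_\Ds^n\,|\bfu_\edge^{n+1}|^2 - \rho_\Ds^{n-1}\,|\bfu_\edge^n|^2 \bigr) + \frac{1}{\ma^2} \sum_{\edge\in\edgesint} |D_\edge|\,\bfu_\edge^{n+1}\cdot (\gradi p^{n+1})_\edge \\
& \hspace{8ex} + \frac{\delta t}{2\ma^4} \sum_{\edge\in \edgesint} \frac{|D_\edge|}{\rho^n_\Ds} \bigl|(\gradi p^{n+1})_\edge\bigr|^2
= \dfrac 1 {2\delta t} \sum_{\edge\in\edgesint} |D_\edge|\,\bigl( \rho_\Ds^n\,|\tilde \bfu_\edge^{n+1}|^2 - \rho_\Ds^{n-1}\,|\bfu_\edge^n|^2 \bigr).
\end{aligned}
\]
By Lemma \ref{lmm:expl_stab_pred}, the right-hand side is bounded above by $-R_{\edges}^{n+1}$, which yields exactly the claimed inequality \eqref{eq:expl_ke}.

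There is no real obstacle here: the entire argument is a straightforward algebraic combination. The only point requiring care is verifying that the cross term generated by the squaring is indeed $\frac{2}{\ma^2} \bfu^{n+1}_\edge \cdot (\gradi p^{n+1})_\edge$ (with the correct sign and the two factors $(\rho_\Ds^n/\delta t)^{1/2}$ and $(\delta t/\rho_\Ds^n)^{1/2}$ canceling), so that exactly the desired pressure-work term appears and the positive term $\frac{\delta t}{2\ma^4}\frac{|D_\edge|}{\rho^n_\Ds}|(\gradi p^{n+1})_\edge|^2$ remains. The remainder $R_{\edges}^{n+1}$ is then inherited unchanged from the prediction estimate \eqref{eq:expl_pred}.
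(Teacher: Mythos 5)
Your proof is correct and is essentially identical to the paper's own argument: the paper also rewrites the correction step in the symmetric form, squares it (so the cross term becomes exactly the pressure-work term and the positive $\ma^{-4}$ term appears), and sums the result with the prediction-step estimate \eqref{eq:expl_pred}. Your write-up merely makes explicit the multiplication by $|D_\edge|/2$, the summation over faces, and the cancellation in the cross term, all of which are carried out correctly.
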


\begin{proof}
As for the first pressure correction scheme, we write the velocity correction equation as:
\[
\Bigl(\frac{\rho^n_\Ds}{\delta t} \Bigr)^{1/2}\, \bfu_\edge^{n+1}
+ \Bigl( \frac{\delta t }{\rho^n_\Ds}\Bigr)^{1/2}\, \frac{1}{\ma^2}\, (\gradi p^{n+1})_\edge
= \Bigl(\frac{\rho^n_\Ds}{\delta t}  \Bigr)^{1/2}\,\tilde \bfu_\edge^{n+1},
\]
square this relation and sum it with \eqref{eq:expl_pred}, which yields the desired inequality.
\end{proof}

We may now state the following result.

\medskip
\begin{lemma}[Local-in-time discrete entropy inequality, existence of a solution]
\label{lmm:loc_ener_esch}
Let $\ma>0$ and assume that the initial data satisfies \eqref{eq:u0_rho0_wp_exp}.
Then, for $\ma$ small enough to ensure that $\rho^{-1}$ is positive, there exists a solution $(\rho^n,\bfu^n)_{0\leq n\leq N}$ to the scheme \eqref{eq:explicit_scheme}, and for $1\leq n\leq N$, the density $\rho^n$ is positive.
Moreover, assuming that the time-step satisfies the following CFL restriction:
\begin{equation} \label{CFL}
\delta t \leq \min_{\edge\in\edgesint} \, \frac{4\,\rho_\Ds^n |D_\edge|}
{\displaystyle 2 \sum_{\edged\in\edgesd(D_\edge)}\,\bigl( \fluxd(\rho^n,\bfu^n)\bigr )^{-} + \varrho(\mcal{A})},
\end{equation}
the following inequality holds:
\begin{multline} \label{esch_loc_ener}
\frac 1 2 \sum_{\edge\in \edgesint} |D_\edge| \Big ( \rho^n_\Ds\ |\bfu^{n+1}_\edge|^2 -  \rho^{n-1}_\Ds\ |\bfu^n_\edge|^2 \Big )
+ \frac 1 {\ma^2} \sum_{K\in \mesh} |K| \Big ( \Pi_\gamma(\rho^{n+1}_K) - \Pi_\gamma(\rho^n_K) \Big )
\\
+ \frac{\delta t^2}{2\,\ma^4} \sum_{\edge\in \edgesint}  \frac{|D_\edge|}{\rho^n_\Ds}\,\bigl|(\gradi p^{n+1})_\edge \bigr|^2
+ \mathcal{R}^{n+1} \leq 0,
\end{multline}
where $\mathcal{R}^{n+1} = R^{n+1}_{\edges} + \ma^{-2}\sum_{K \in \mesh} R_K^{n+1}\geq 0$.
\end{lemma}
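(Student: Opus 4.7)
The plan is to follow the same template used for Lemmas \ref{lmm:loc_ener_i} and \ref{lmm:loc_ener_sch}: combine the already-established kinetic energy balance of Lemma \ref{lmm:expl_ke} with the positive renormalization identity \eqref{eq:expl_renorm:Pi}, using the discrete gradient-divergence duality \eqref{eq:grad-div} to cancel the pressure-velocity cross term, and check that the CFL condition \eqref{CFL} enforces non-negativity of the leftover remainder $R_\edges^{n+1}$.

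First I would dispatch the positivity and existence parts. For $\ma$ small enough, Lemma \ref{lmm:sch:init} (whose argument carries over to the assumption \eqref{eq:u0_rho0_wp_exp}) guarantees $\rho^{-1}>0$ and $\rho^0>0$. At each subsequent step, the prediction \eqref{eq:e_sch_mom} is fully explicit, hence $\tilde\bfu^{n+1}$ is determined in closed form from the known data $(\rho^{n-1},\rho^n,\bfu^n)$. The correction step \eqref{eq:e_sch_cor}--\eqref{eq:e_sch_mass} amounts to eliminating $\bfu^{n+1}$ and solving a nonlinear elliptic problem for $\rho^{n+1}$ (or equivalently $p^{n+1}$) with $\rho^n$ as source; existence follows from a Brouwer fixed point argument identical to the one in the proof of Lemma \ref{lmm:loc_ener_sch} (adapted from \cite[Proposition 5.2]{eym-10-conv}), while the upwind choice \eqref{eq:def_FKedge} in the mass balance propagates positivity of the density along the standard lines of \cite[Lemma 2.1]{gas-11-dis}.

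Next I would derive the entropy inequality \eqref{esch_loc_ener}. Multiply the local kinetic energy balance \eqref{eq:expl_ke}, which is already summed over edges, by one; multiply the positive renormalization identity \eqref{eq:expl_renorm:Pi} by $\ma^{-2}|K|$ and sum over $K\in\mesh$; then add the two. The telescopic structure of the kinetic energy and of $\Pi_\gamma(\rho^{n+1})-\Pi_\gamma(\rho^n)$ reproduces the first two terms of \eqref{esch_loc_ener}, and the dissipative pressure-gradient contribution of the correction step provides the $\delta t^2/(2\ma^4)$ term. The coupling term
\[
\frac{1}{\ma^2}\sum_{\edge\in\edgesint}|D_\edge|(\gradi p^{n+1})_\edge\cdot\bfu^{n+1}_\edge
\quad+\quad
\frac{1}{\ma^2}\sum_{K\in\mesh}|K|\,\eos(\rho^{n+1}_K)\,\dive(\bfu^{n+1})_K
\]
vanishes exactly by the discrete duality \eqref{eq:grad-div}. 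The convective dual fluxes telescope out because of the conservativity property (H2). The non-negativity of $R_K^{n+1}$ is built into Lemma \ref{lem:expl_remorm} through the convexity of $\psi_\gamma$, so it only remains to ensure that $R_\edges^{n+1}\ge 0$; but the CFL assumption \eqref{CFL} is exactly what makes each edge-coefficient in formula \eqref{remainder:CFL} non-negative, and this is the place where \eqref{CFL} is used.

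The main obstacle I anticipate is not the algebra of the cancellation, which is essentially dictated by the design of the dual fluxes and by the duality \eqref{eq:grad-div}, but rather handling the stability of the explicit convection-diffusion prediction in a sharp enough form to isolate the remainder \eqref{remainder:CFL}; this is precisely what Lemma \ref{lmm:expl_stab_pred} provides, so at this stage the proof reduces to assembling Lemmas \ref{lem:expl_remorm}, \ref{lmm:expl_stab_pred} and \ref{lmm:expl_ke} together with \eqref{eq:grad-div}, (H2) and Lemma \ref{lmm:diff_coerc}. A minor additional care is needed to verify $\rho^0_\Ds, \rho^{-1}_\Ds > 0$ uniformly on the dual cells, which follows from convex combination and from the analogue of Lemma \ref{lmm:sch:init} under the weaker assumption \eqref{eq:u0_rho0_wp_exp}.
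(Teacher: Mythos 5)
Your proposal is correct and follows essentially the same route as the paper: sum $\ma^{-2}|K|\times$\eqref{eq:expl_renorm:Pi} over the primal cells, add the (already edge-summed) kinetic energy balance \eqref{eq:expl_ke}, cancel the pressure--velocity cross term via the duality \eqref{eq:grad-div}, and use the CFL condition \eqref{CFL} only to make $R_{\edges}^{n+1}$ non-negative, with existence by Brouwer and positivity of the density by the upwind mass flux. The only superfluous ingredient you invoke is Lemma \ref{lmm:diff_coerc}: since the diffusion term is explicit here, its contribution is already absorbed into $R_{\edges}^{n+1}$ through the spectral radius of $\mcal{A}$ in Lemma \ref{lmm:expl_stab_pred}, and accordingly no viscous dissipation term appears in \eqref{esch_loc_ener}.
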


\begin{proof}
The positivity of the density is a consequence of the properties of the upwind choice \eqref{eq:def_FKedge} for $\rho$ in the mass balance of the correction step \eqref{eq:esch_mass}.
After multiplication by $\ma^{-2}|K|$, we sum the renormalization identity \eqref{eq:expl_renorm:Pi} over the primal cells, and sum the obtained relation with the kinetic energy balance \eqref{eq:expl_ke}.
Since the discrete gradient and divergence operators are dual with respect to the $\xL^2$ inner product (see \eqref{eq:grad-div}), we get \eqref{esch_loc_ener}. Under the CFL condition \eqref{CFL}, the remainder term $R^{n+1}_{\edges}$ defined in \eqref{remainder:CFL} is non-negative.
The existence of a solution $(\rho^{n+1},\bfu^{n+1})$ to the scheme \eqref{eq:explicit_scheme} follows form the Brouwer fixed point theorem, by an easy adaptation of the proof of \cite[Proposition 5.2]{eym-10-conv}.
\end{proof}

The restriction \eqref{CFL} on the time-step is a convective-diffusive CFL condition. In particular, it is satisfied if the time step simultaneously satisfies the following two conditions:
\[
\delta t \leq \min_{\edge\in\edgesint} \, \frac{\rho_\Ds^n |D_\edge|}{\displaystyle  \sum_{\edged\in\edgesd(D_\edge)}\,\bigl( \fluxd(\rho^n,\bfu^n)\bigr )^{-}}
\quad \text{and} \quad \delta t \leq \min_{\edge\in\edgesint} \, \frac{2\,\rho_\Ds^n |D_\edge|}{ \varrho(\mcal{A})}.
\]
The first condition is a convective CFL restriction associated with the velocity of the fluid, which is consistent with the explicit upwind discretization of the momentum convection term.
As for the second condition, it is also a classical diffusive CFL restriction associate with an explicit treatment of the diffusion term.
Indeed, if the Lam\'e coefficients $\mu$ and $\lambda$ are of the same order of magnitude, usual regularity assumptions on the mesh imply that: 
\[
\varrho(\mcal{A}) \propto \mu\, h_\disc^{d-2},
\]
where $h_\disc$ is a characteristic length of the mesh cells.
Therefore, if the viscosity $\mu$ is small, typically $\mu\approx h_\disc$, the diffusive CFL restriction is comparable to the convective CFL restriction.

\medskip
\begin{lemma}[Global discrete entropy inequality]
Let $\ma>0$ and assume that the initial data $(\rho_0^\ma,\bfu_0^\ma)$ satisfies \eqref{eq:u0_rho0_wp_exp}.
By Lemma \ref{lmm:loc_ener_esch}, for $\ma$ small enough, there exists a solution $(\rho^n,\bfu^n)_{0\leq n\leq N}$ to the scheme \eqref{eq:explicit_scheme}. Moreover, if the time-step satisfies the CFL restriction \eqref{CFL} for all $1\leq n\leq N$ , then there exists $C>0$ independent of $\ma$ such that, for $\ma$ small enough and for all $1\leq n\leq N$ :
\begin{equation}
\label{eq:esch_stab}
\frac 1 2  \sum_{\edge\in \edgesint} |D_\edge| \ \rho^{n-1}_\Ds\ |\bfu^n_\edge|^2
+ \frac 1 {\ma^2} \sum_{K\in \mesh}|K| \, \Pi_\gamma(\rho^n_K)
+ \frac{\delta t^2}{2\,\ma^4} \sum_{k=1}^n \sum_{\edge\in \edgesint}  \frac{|D_\edge|}{\rho^{k-1}_\Ds}\,\bigl|(\gradi p^k)_\edge \bigr|^2
\leq C.
\end{equation}
\end{lemma}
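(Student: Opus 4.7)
The plan is to mimic directly the proofs of the analogous global entropy estimates already established for the implicit scheme (inequality \eqref{eq:sch_stab}) and for the pressure-correction scheme (inequality \eqref{eq:corr_stap_bis}), the only novelty being the bookkeeping of the initial data in the less restrictive setting \eqref{eq:u0_rho0_wp_exp}.

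First I would fix $n \in \{1,\ldots,N\}$ and sum the local-in-time entropy inequality \eqref{esch_loc_ener} over $k=0,\ldots,n-1$. Under the CFL restriction \eqref{CFL}, Lemma \ref{lmm:loc_ener_esch} asserts that each $\mathcal{R}^{k+1}$ is non-negative, so these terms can be dropped. The kinetic energy contributions telescope into $\frac 1 2 \sum_{\edge\in\edgesint}|D_\edge|(\rho^{n-1}_\Ds|\bfu^n_\edge|^2 - \rho^{-1}_\Ds|\bfu^0_\edge|^2)$, the relative entropy contributions telescope into $\ma^{-2}\sum_K|K|(\Pi_\gamma(\rho^n_K)-\Pi_\gamma(\rho^0_K))$, and the pressure gradient contribution is already an accumulated sum. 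Rearranging yields
\[
\frac 1 2 \sum_{\edge\in\edgesint}|D_\edge|\,\rho^{n-1}_\Ds|\bfu^n_\edge|^2 + \frac 1 {\ma^2}\sum_{K\in\mesh}|K|\,\Pi_\gamma(\rho^n_K) + \frac{\delta t^2}{2\ma^4}\sum_{k=1}^n\sum_{\edge\in\edgesint}\frac{|D_\edge|}{\rho^{k-1}_\Ds}|(\gradi p^k)_\edge|^2 \leq C_0^\ma,
\]
where $C_0^\ma = \frac 1 2 \sum_{\edge\in\edgesint}|D_\edge|\,\rho^{-1}_\Ds|\bfu^0_\edge|^2 + \ma^{-2}\sum_{K\in\mesh}|K|\,\Pi_\gamma(\rho^0_K)$.

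The bulk of the work is to prove that $C_0^\ma$ is bounded independently of $\ma$, for $\ma$ small enough. For the kinetic energy initial term, I would invoke the analogue of Lemma \ref{lmm:sch:init} (which was stated under the stronger well-prepared assumption but whose argument only uses that $\|\rho_0^\ma-1\|_{\xL^\infty}\to 0$ and $\|\dive\bfu_0^\ma\|_{\xL^2}\to 0$, both of which hold under \eqref{eq:u0_rho0_wp_exp} since now $\|\rho_0^\ma-1\|_{\xL^\infty} \leq C\ma$ and $\|\dive\bfu_0^\ma\|_{\xL^2}\leq C\ma$): this yields $\max_{\edge}\rho^{-1}_\Ds \leq 2$ for $\ma$ small, and $\sum_\edge |D_\edge||\bfu^0_\edge|^2$ is controlled by $\|\bfu_0^\ma\|_{\xH^1(\Omega)^d}^2$ through a standard trace inequality on each dual cell, which is itself bounded by \eqref{eq:u0_rho0_wp_exp}. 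For the entropy initial term, I would use $\|\rho_0^\ma-1\|_{\xL^\infty}\leq C\ma$ to conclude that $|\rho^0_K-1|\leq C\ma$ for every $K$, so that for $\ma$ small all $\rho^0_K$ lie in the interval $(0,2)$; I then apply the upper bound \eqref{upperPi} of Lemma \ref{lem:pi}, obtaining $\Pi_\gamma(\rho^0_K)\leq C_\gamma|\rho^0_K-1|^2 \leq C\ma^2$, so that $\ma^{-2}\sum_K|K|\Pi_\gamma(\rho^0_K)\leq C|\Omega|$.

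The only mild subtlety, and hence the main point deserving care, is the bound on the initial density $\rho^{-1}$: in contrast with Section \ref{sec:proj1}, the rate in \eqref{eq:u0_rho0_wp_exp} is only $\ma$ (instead of $\ma^2$), but this is harmless for the present estimate since we merely need $\rho^{-1}$ close to $1$ in $\xL^\infty$, and the computation given in the proof of Lemma \ref{lmm:sch:init} still yields $\max_K|\rho^{-1}_K-1|\to 0$ as $\ma\to 0$ under the weaker hypothesis, as noted in the remark immediately following that lemma. Combining these bounds gives $C_0^\ma \leq C$ independently of $\ma$, which is precisely \eqref{eq:esch_stab}.
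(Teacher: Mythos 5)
Your proposal is correct and follows essentially the same route as the paper: sum the local entropy inequality \eqref{esch_loc_ener} over time steps (dropping the non-negative remainders guaranteed by the CFL condition \eqref{CFL}), telescope, and then bound the initial kinetic energy via the boundedness of $\rho^{-1}_\Ds$ (adapted from Lemma \ref{lmm:sch:init} under the weaker hypothesis \eqref{eq:u0_rho0_wp_exp}) together with a trace inequality on $\bfu_0^\ma\in\xH^1(\Omega)^d$, and the initial entropy via $|\rho^0_K-1|\leq C\ma$ and the upper bound \eqref{upperPi}. Your remark that the $\mathcal{O}(\ma)$ rate on $\rho_0^\ma-1$ (instead of $\ma^2$) suffices here is exactly the point the paper also relies on.
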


\begin{proof}
Multiplying equation \eqref{esch_loc_ener} by $\delta t$ and summing over the time steps yields for $1\leq n\leq N$:
\begin{multline} \label{eq:expl_stab}
\frac 1 2  \sum_{\edge\in \edgesint} |D_\edge| \ \rho^{n-1}_\Ds\ |\bfu^n_\edge|^2
+ \frac 1 {\ma^2} \sum_{K\in \mesh}|K| \, \Pi_\gamma(\rho^n_K)
+ \frac{\delta t^2}{2\,\ma^4} \sum_{k=1}^n \sum_{\edge\in \edgesint}  \frac{|D_\edge|}{\rho^{k-1}_\Ds}\,\bigl|(\gradi p^k)_\edge \bigr|^2
+ \mathcal{R}^n
\\
\leq \ 
\frac 1 2  \sum_{\edge\in \edgesint} |D_\edge| \ \rho^{-1}_\Ds\ |\bfu^0_\edge|^2
+\frac 1 {\ma^2} \sum_{K\in \mesh}|K| \ \Pi_\gamma(\rho^0_K),
\end{multline}
with $\displaystyle \mathcal{R}^n= \sum_{k=0}^{n-1} \big (R^{k+1}_{\edges} + \ma^{-2}\sum_{K \in \mesh} R_K^{k+1} \big ) \geq 0$.

Let us prove that the right hand side of \eqref{eq:expl_stab} is uniformly bounded for all $\ma$ small enough.
As in Lemma \ref{lmm:sch:init}, one may prove that under \eqref{eq:u0_rho0_wp_exp}, $\rho_K^{-1}$ is bounded for all $K\in\mesh$ for $\ma$ small enough and therefore so is $\rho_\Ds^{-1}$ for all $\edge\in\edgesint$. 
Hence, since $\bfu_0^\ma$ is uniformly bounded in $\xH^1(\Omega)^d$ by \eqref{eq:u0_rho0_wp_exp}, a classical trace inequality yields the boundedness of the first term. By \eqref{eq:u0_rho0_wp_exp}, one has $|\rho_K^0-1|\leq C\ma$ for all $K\in\mesh$. 
Hence, by \eqref{upperPi}, the second term is also uniformly bounded with respect to $\ma$.
\end{proof}

From now on, we need to assume that we use a time step independent of the Mach number and satisfying the constraint \eqref{CFL} (which involves the velocity field, which itself depends on $\ma$).
The existence of such a time step is proven in Appendix \ref{sec:tstep_cfl}.

\medskip
Under this assumption, by the same arguments as for the pressure correction scheme, we get that the dynamic pressure is controlled independently of $\ma$; this is stated in the following lemma.

\begin{lemma}[Control of the pressure]
\label{lmm:esch:controle:p}
Let $\ma>0$ and assume that the initial data $(\rho_0^\ma,\bfu_0^\ma)$ satisfies \eqref{eq:u0_rho0_wp_exp}.
Then, there exists a solution $(\rho^n,\bfu^n)_{0\leq n\leq N}$ to the scheme \eqref{eq:explicit_scheme}.
Let  $p^n=\eos(\rho^n)$ and define $\deltap^n = \lbrace \deltap_K^n,\,K\in\mesh \rbrace $ where $\deltap_K^n = (p_K^n-m(p^n))/\ma^2$ with $m(p^n)$ the mean value of $p^n$ over $\Omega$.
If the time step $\delta t$ satisfies the CFL condition \eqref{CFL} for all $1\leq n\leq N$ independently of the Mach number $\ma$, then, one has, for all $1\leq n \leq N$:
\begin{equation} \label{eq:esch:controle:p}
\norm{\deltap^n} \leq C_{\disc,\delta t},
\end{equation}
where  the real number $C_{\disc,\delta t}$ depends on the mesh and the time step but not on $\ma$, and $\norm{\cdot}$ stands for any norm on the space of discrete functions.
\end{lemma}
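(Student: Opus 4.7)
The plan is to mirror the argument given for the pressure-correction scheme in Lemma \ref{lmm:sch:controle:p}. The key observation is that the global discrete entropy inequality \eqref{eq:esch_stab} for the semi-implicit scheme contains, among its nonnegative terms, a cumulative weighted pressure-gradient contribution of exactly the same form as in \eqref{eq:corr_stap_bis}, namely
\[
\frac{\delta t^2}{2\,\ma^4} \sum_{k=1}^n \sum_{\edge\in\edgesint} \frac{|D_\edge|}{\rho^{k-1}_\Ds}\,\bigl|(\gradi p^k)_\edge\bigr|^2 \,\leq\, C,
\]
with $C$ independent of $\ma$. The crucial point at which the CFL assumption enters is precisely this estimate: without a time step chosen independently of $\ma$, the above bound could degenerate in the low Mach limit. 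Granted this uniformity (ensured by the appendix referred to in the text preceding the statement), the inequality provides the desired starting estimate.

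First I would deduce from this a pointwise bound $|(\gradi p^n)_\edge| \leq C_{\disc,\delta t}\,\ma^2$ for all $\edge \in \edgesint$ and $1 \leq n \leq N$. For this it suffices to observe that the dual densities $\rho^{k-1}_\Ds$ are bounded from above by a mesh-dependent constant: by conservativity of the discrete mass balance \eqref{eq:esch_mass} one has $\sum_K |K|\,\rho_K^n = \sum_K |K|\,\rho_K^0$, and the positivity of $\rho_K^n$ combined with the $\xL^\infty$-bound on $\rho_0^\ma$ provided by \eqref{eq:u0_rho0_wp_exp} yields an $\xL^\infty$-bound on $\rho_K^n$, hence on the convex combination $\rho^n_\Ds$.

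Dividing by $\ma^2$ then gives $|(\gradi \deltap^n)_\edge| \leq C_{\disc,\delta t}$ independently of $\ma$, and a norm equivalence argument on the fixed finite-dimensional discrete space controls $\norm{\gradi \deltap^n}_{-1,\disc}$ by some mesh-dependent constant. Invoking the gradient-divergence duality \eqref{eq:grad-div} together with the discrete inf-sup condition (Lemma \ref{lmm:inf-sup}), exactly as at the end of the proof of Lemma \ref{lmm:sch:controle:p}, yields $\norm{\deltap^n}_{\xL^2} \leq \beta^{-1} C_{\disc,\delta t}$, and finite-dimensional equivalence then delivers the stated bound in any discrete norm. The only substantive new ingredient compared to the pressure-correction case is the uniformity of the CFL condition \eqref{CFL} in $\ma$; all remaining steps are a direct transposition of the earlier argument.
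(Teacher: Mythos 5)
Your proposal is correct and follows exactly the route the paper intends: the lemma is stated after the remark that it holds ``by the same arguments as for the pressure correction scheme,'' i.e.\ extract the pointwise bound $|(\gradi p^n)_\edge|\leq C_{\disc,\delta t}\,\ma^2$ from the weighted pressure-gradient term in the global entropy estimate \eqref{eq:esch_stab} (made uniform in $\ma$ by the $\ma$-independent CFL assumption), then conclude via the discrete $\xH^{-1}$ bound, the duality \eqref{eq:grad-div} and the \emph{inf-sup} condition as in Lemma \ref{lmm:sch:controle:p}. Your explicit justification of the upper bound on $\rho_\Ds^{k-1}$ via conservativity of the mass balance is a detail the paper leaves implicit (cf.\ Remark \ref{rem-baro}) and is correctly supplied.
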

%
%
\subsection{Incompressible limit of the scheme}

By the same proof as for the previous pressure correction scheme, we have the following convergence result. An analogous result is also valid in the inviscid case $\mu=\lambda=0$.

\begin{theorem}[Incompressible limit of the semi-implicit scheme \eqref{eq:explicit_scheme}]\ \\
Let $(\ma\exm)_{m\in \xN}$ be a sequence of positive real numbers tending to zero.
Let the associated sequence of initial data $(\rho_0^{\ma\exm},\bfu_0^{\ma\exm})$ satisfy \eqref{eq:u0_rho0_wp_exp}, and the time step satisfy the CFL condition \eqref{CFL} for all $1\leq n\leq N$ independently of $m$.
Let $(\rho\exm,\bfu\exm)$ be a corresponding sequence of solutions of the scheme \eqref{eq:explicit_scheme}.
Then the sequence $(\rho\exm)_{m \in \xN}$ tends to the constant function $\rho =1$ when $m$ tends to $+ \infty$ in $\xL^\infty((0,T),\xL^\gamma(\Omega))$. Moreover, for all $q\in[1,\min(2,\gamma)]$, there exists $C>0$ such that:
\[
   \norm{\rho\exm-1}_{\xL^\infty((0,T);\xL^q(\Omega))} \leq C\ma\exm, \qquad \text{for $m$ large enough}.
\]

In addition, the sequence $(\bfu\exm, \delta p\exm)_{m \in \xN}$ tends, in any discrete norm, to a limit $(\bfu,\deltap)$ which is the solution to the following inf-sup stable semi-implicit scheme for the incompressible Navier-Stokes equations: 

\medskip
Knowing  $\deltap^n\in\xL_\mesh(\Omega)$ and $\bfu^n\in\xbfH_{\edges,0}(\Omega)$, solve for $\deltap^{n+1}\in\xL_\mesh(\Omega)$ and $\bfu^{n+1}\in\xbfH_{\edges,0}(\Omega)$ :
\begin{subequations}\label{eq:limit_explicit_scheme}
\begin{align} \label{eq:limit_esch_mass} & 
\dive(\bfu^{n+1})_K = 0, &\forall K \in \mesh,
\\[2ex]  \label{eq:limit_esch_mom} &
\dfrac 1 {\delta t}\ \bigl(\bfu^{n+1}_\edge-\bfu_\edge^n \bigr)
+ \divv(\bfu^n \otimes \bfu^n)_\edge^{\rm up}
- \divv (\bftau(\bfu^n))_\edge
+ (\gradi \deltap^{n+1})_\edge
=0, & \forall \edge \in \edgesint.
\end{align}
\end{subequations}
\end{theorem}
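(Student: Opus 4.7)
The plan is to mimic the convergence argument of Theorem~\ref{Theorem:esch} for the pressure correction scheme, exploiting the global entropy estimate \eqref{eq:esch_stab}, the pressure control \eqref{eq:esch:controle:p}, and a finite-dimensional compactness argument on the fixed discretization.

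First, I would establish the convergence of the density. The global entropy inequality \eqref{eq:esch_stab} furnishes the bound $\sum_{K\in\mesh} |K|\, \Pi_\gamma(\rho^n_K)\leq C\,{\ma^{(m)}}^2$ uniformly in $n$ and $m$. Plugging this into the lower bounds on $\Pi_\gamma$ provided by Lemma~\ref{lem:pi} yields, by exactly the same computations as in Propositions~\ref{conve_rho_continuous_0} and~\ref{conve_rho_continuous} (simply replacing integrals over $\Omega$ by their piecewise-constant counterparts), both the qualitative convergence $\rho^{(m)}\to 1$ in $\xL^\infty((0,T);\xL^\gamma(\Omega))$ and the quantitative rate $\|\rho^{(m)}-1\|_{\xL^\infty((0,T);\xL^q(\Omega))}\leq C\,\ma^{(m)}$ for $q\in[1,\min(2,\gamma)]$. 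This step is essentially a transcription of the continuous asymptotic analysis and raises no difficulty.

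Next, I would derive uniform bounds on the velocity and the rescaled pressure. Since $\rho^{(m)}\to 1$ cell-by-cell on the fixed mesh, for $m$ large enough there is a mesh-dependent lower bound $\rho_{\mathrm{min},\disc}>0$ such that $\rho_\Ds^{n-1}\geq \rho_{\mathrm{min},\disc}/2$ uniformly, hence the kinetic energy term in \eqref{eq:esch_stab} provides an $\xL^2$ bound on $\bfu^{(m),n}$ independent of $m$. Because the mesh and time step are fixed, norm equivalence in finite dimension upgrades this to a bound in any discrete norm. Lemma~\ref{lmm:esch:controle:p}, which crucially uses the \emph{inf-sup} stability (Lemma~\ref{lmm:inf-sup}) and the CFL-uniform-in-$\ma$ time step (constructed in the appendix), provides the analogous bound on $\deltap^{(m),n}$.

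With these uniform bounds in place, I would apply the Bolzano--Weierstrass theorem in the finite-dimensional space of discrete unknowns to extract a subsequence of $(\bfu^{(m)},\deltap^{(m)})_{m\in\xN}$ converging in any discrete norm to some limit $(\bfu,\deltap)$, and pass to the limit cell-by-cell in the scheme \eqref{eq:explicit_scheme}. The mass balance \eqref{eq:esch_mass} becomes $\dive(\bfu^{n+1})_K=0$ because $\rho^{(m)}\to 1$ makes the time derivative vanish and turns the upwind mass flux into the centered velocity flux; the momentum balance \eqref{eq:esch_mom} yields \eqref{eq:limit_esch_mom} since $\rho_\Ds^n,\rho_\Ds^{n-1}\to 1$, the upwind convection is continuous in $(\rho,\bfu)$ on a fixed mesh, and by construction $\ma^{-2}(\gradi p^{n+1})_\edge=(\gradi \deltap^{n+1})_\edge$. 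Finally, the limit incompressible scheme \eqref{eq:limit_explicit_scheme} has a unique solution (the velocity is determined by an $E_\disc(\Omega)$-valued equation analogous to \eqref{imp:stokes}, and the pressure is then recovered uniquely up to its mean value thanks to the \emph{inf-sup} condition), so the whole sequence converges.

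I expect the only real subtlety to be the passage to the limit in the upwind mass-convection term $\dive(\rho^{n+1}\bfu^{n+1})_K$ of \eqref{eq:esch_mass}: strictly speaking the upwind choice of $\rho_\edge$ depends discontinuously on $\operatorname{sgn}(\bfu_\edge\cdot\bfn_{K,\edge})$, but since $\rho_K^{(m)}\to 1$ uniformly, both upwind branches yield the same limit $\rho_\edge=1$, so the operator collapses to the centered discrete divergence of $\bfu$ regardless of the sign; this removes the apparent obstacle and justifies the cell-by-cell limit. All other terms pass to the limit by continuity on the fixed finite-dimensional space.
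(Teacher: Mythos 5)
Your proposal is correct and follows essentially the same route as the paper, which simply invokes the argument of Theorem \ref{Theorem:esch}: density convergence from the entropy estimate \eqref{eq:esch_stab} and Lemma \ref{lem:pi}, velocity and pressure bounds from the kinetic energy term and Lemma \ref{lmm:esch:controle:p}, Bolzano--Weierstrass plus cell-by-cell limit passage, and uniqueness of the limit scheme to upgrade to full-sequence convergence. Your extra observation that the upwind face density causes no discontinuity issue because both branches converge to $1$ is a correct and worthwhile clarification of a point the paper leaves implicit.
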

%
%
\section{Numerical tests} \label{sec:ana_sol}

We assess the convergence of the scheme on a test case built for this purpose.
Analytical solutions of the 2D barotropic Euler and Navier-Stokes equations are obtained through the following steps: we first derive a compactly supported $H^2(\xR^2)$ solution of the stationary barotropic Euler equations consisting in a standing vortex; then, we obtain a time-dependent solution of the Euler equations by adding a constant velocity motion; finally, we pass to the Navier-Stokes equations by compensating the viscous forces at the right-hand side.
The velocity field of the standing vortex is sought under the form:
\[
\hat \bfu = f(\xi) \begin{bmatrix} -x_2 \\ x_1 \end{bmatrix},\quad \mbox{with } \xi = x_1^2 + x_2^2.
\]
A simple derivation of this expression yields:
\[
(\hat \bfu \cdot \gradi) \hat \bfu = -f(\xi)^2 \begin{bmatrix} x_1 \\ x_2 \end{bmatrix}.
\]
Let us seek the pressure under the form $\hat p=g(\xi)$, so:
\[
\gradi \hat p = 2\, g'(\xi) \begin{bmatrix} x_1 \\ x_2 \end{bmatrix}.
\]
Assuming a perfect gas pressure law, the density is given by $\hat \rho = g(\xi)^{1/\gamma}$.
By construction, these functions satisfy $\dive(\hat\rho\hat\bfu)=0$. Moreover, one has $\hat \rho (\hat \bfu \cdot \gradi) \hat \bfu +\gradi  \hat p = 0 $ if, and only if $-g(\xi)^{1/\gamma}\,f(\xi)^2 + 2\, g'(\xi) =0$ for all $\xi$.
We thus obtain a solution of the stationary Euler equation if $g$ takes the following expression:
\[
g=\Bigl(\frac{\gamma-1}{2\gamma} (F +c_M) \Bigr)^{\gamma/(\gamma-1)},
\]
where $F$ is such that $F'=f^2$ and $c_M$ is a positive real number.
For the present numerical study, we choose $\gamma=3$ and $f(\xi)=10\,\xi^2 (1-\xi)^2$ if $\xi \in (0,1)$, $f=0$ otherwise, which indeed yields an $H^2(\xR^2)$ velocity field.
The associated expression of $F$ is:
\[
F(\xi)=100\ \bigl(\frac 1 5 \xi^5 -\frac 2 3 \xi^6 + \frac 6 7 \xi^7 - \frac 1 2 \xi^8 +\frac 1 9 \xi^9\bigr) \mbox{ if } \xi \in (0,1),
\ F(\xi) = F(1) = \frac{10}{63} \mbox{ otherwise}.
\]
The problem is made unstationary by a time translation: given a constant vector field $\bfa$, the density $\rho$ and the velocity $\bfu$  are deduced from the steady state solution $\hat \rho$ and $\hat \bfu$ by $\rho(\bfx,t)=\hat \rho(\bfx-\bfa t)$ and $\bfu(\bfx,t)=\hat \bfu(\bfx-\bfa t)+\bfa$.
The center of the vortex is initially located at $\bfx_0=(0,0)^t$, the translation velocity $\bfa$ is set to $\bfa=(1,1)^t$, the computational domain is $\Omega=(-1.2,\,2.8)^2$ and the computation is run on the time interval $(0,0.8)$.
We perform several computations keeping the velocity constant (and therefore of order 1, according to the expression of $f$) and varying the Mach number by changing the constant $c_M$ and therefore also the pressure level and the speed of sound (given by $c^2=\gamma\, p ^{(\gamma-1)/\gamma}$).
Choosing $1$ as the reference value for the velocity and the speed of sound outside the vortex as the reference speed of sound, $c_M$ and the Mach number are connected as given in Table \ref{table-mach}:

\medskip
\begin{table}[hbt]
\begin{center}
\begin{tabular}{|c|c|c|c|c|c|}\hline
\rule[-1.2ex]{0pt}{3.8ex} $c_M$ & $1$ & $10^2$ & $10^4$ & $10^6$ & $10^8$
\\ \hline
\rule[-1.2ex]{0pt}{3.8ex} c & $1.08$ & $10.$ & $100$ & $1000$ & $10000$
\\ \hline
\rule[-1.2ex]{0pt}{3.8ex} Ma & $\simeq 1$ & 0.1 & 0.01 & 0.001 & 0.0001
\\ \hline
\end{tabular}
\end{center}
\label{table-mach}
\caption{Values of the speed of sound and Mach number with respect to the constant $c_M$.}	
\end{table}

\medskip
Computations are run with the open-source CALIF$^3$S software developed at IRSN \cite{califs}, with the pressure correction algorithm described in Section \ref{sec:proj1}.
The mesh is a $500\times 500$ uniform grid, and the time step is set at the same value as the space step, \ie\ $\delta t = 0.008$, for all the computations, which corresponds to a CFL number with respect to the material velocity close to $1.5$.

\medskip
\paragraph{Euler equations} -- In the Euler case, since the convection term in the momentum balance is approximated with a centered discretization, the computations are stabilized by taking into account an artificial viscosity given by:
\[
\mu_a = \rho_{ext} \, v_{max}\, h/10,
\]
where $\rho_{ext}$ stands for the density outside the vortex (which depends on the computation), $v_{max} =1.4$ is an approximation of the maximal value of the components of the velocity and $h$ is the space step.
This viscosity is in the order of a fifth of the upwinding-induced numerical viscosity.
We plot on Figure \ref{fig:vy_euler} the second component of the velocity obtained at $t=0.8$ along the line $x_2=0.8$, which crosses the center of the vortex.
The results are almost independent of the Mach number (in fact, the curves are superimposed on the figure).
To check the differences, a zoom of the curves near the minimum value of the velocity is shown on Figure \ref{fig:vy_zoom_euler}; the maximum of the differences is close to $0.0016$, while the amplitude of the analytical velocity variation is equal to $1$.

\begin{figure}[!t]
\begin{center} \includegraphics[width=0.8\textwidth]{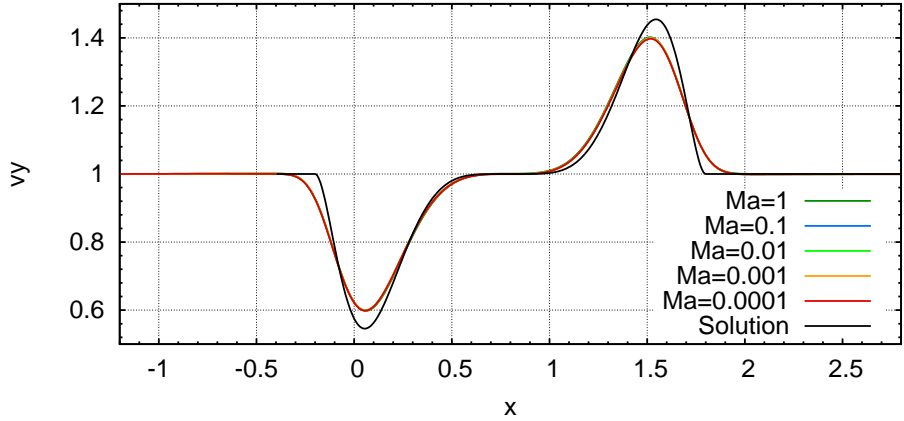} \end{center}
\caption{\label{fig:vy_euler}
Euler case -- Second component of the velocity at $t=0.8$ along the line $x_2=0.8$ for various Mach numbers (all the curves canot be distinguished) and analytical solution.}
\end{figure}

\begin{figure}[!t]
\begin{center} \includegraphics[width=0.8\textwidth]{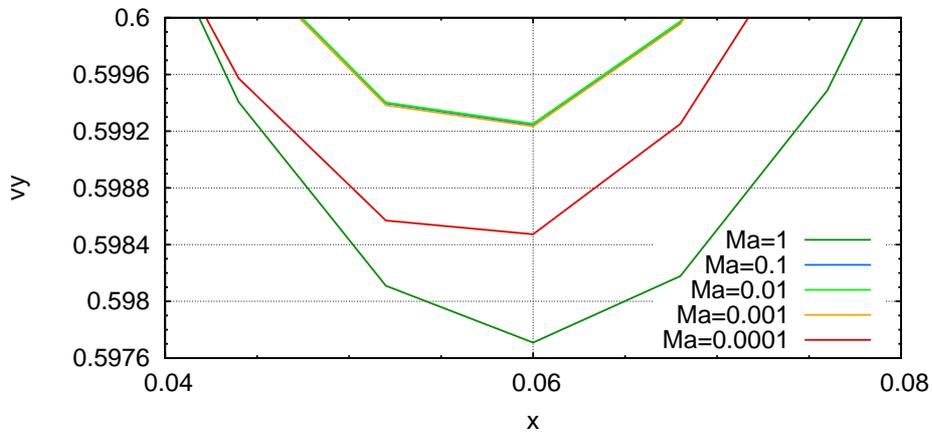} \end{center}
\caption{\label{fig:vy_zoom_euler}
Euler case -- Second component of the velocity at $t=0.8$ along the line $x_2=0.8$ for various Mach numbers ; zoom near the minimum of the velocity.}
\end{figure}

In addition, the expression of the pressure shows that $F$ is much lower than $c_M$, so a Taylor development shows that the quantity $\delta p$ defined by
\[
\delta p =(p-p_{ext})/c,
\]
with $p_{ext}$ the pressure outside the vortex and $c$ the speed of sound given in the above table, should be approximatively independent of the Mach number.
This quantity is plotted on Figure \ref{fig:p_euler}, which shows that it is indeed the case (the observed discrepancy when the Mach number is close to $1$ may be attributed to the fact that $F/c_M$ takes in this case its greatest value, and the Taylor development is less accurate).

\medskip
Finally, the $L^1$ norm of the difference between the numerical velocity and the piecewise constant function obtained by taking, on each diamond cell, the value of the continuous solution at the cell mass center is, at $t=0.8$: $0.192$ for a Mach number $Ma$ close to $1$, $0.189$ for $Ma=0.1$ and $0.187$ for the other values of the Mach number.
For the pressure, the same discrete $L^1$ norm of the difference between the numerical and analytical solutions scales as the magnitude of the pressure variations, which, in turn, as said before, scales as the speed of sound $c$.
The ratio between this norm and $c$ reads: $0.0168$, $0.0223$, $0.0229$, $0.0227$ and $0.0264$, for the tested Mach numbers from $1$ to $0.0001$.
The value slightly greater obtained for $Ma=0.0001$ is probably due to the fact that the pressure is so high in this case that the algebraic solvers become less accurate.

\begin{figure}[!t]
\begin{center} \includegraphics[width=0.8\textwidth]{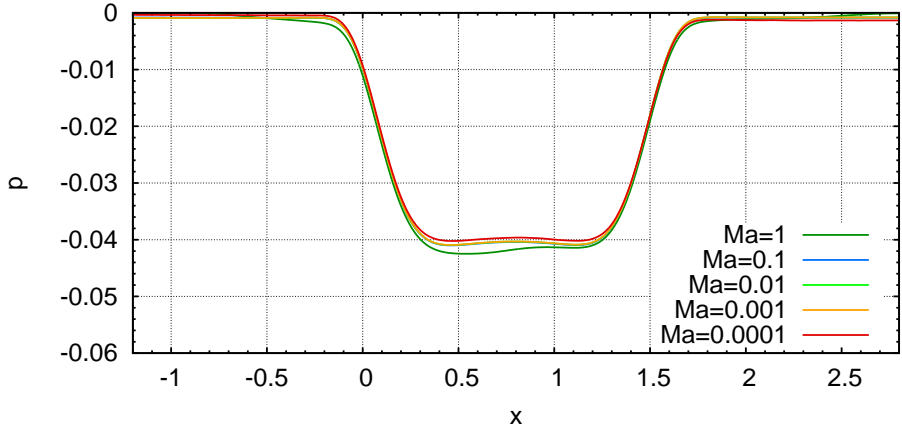} \end{center}
\caption{\label{fig:p_euler}
Euler case -- Difference between the local pressure and the pressure outside the vortex scaled by the speed of sound, along the line $x_2=0.8$, for various Mach numbers.}
\end{figure}

\medskip
\paragraph{Navier-Stokes equations} -- The same analytical solution is used for the Navier-Stokes equations, with a viscosity now given by:
\[
\mu = \rho_{ext}/50,
\]
so a Reynolds number in the range of $50$ (equal to $50$ if the characteristic velocity range is set to $1$, equal to $75$ if its is set to $v_{max}=1.5$).
The corresponding viscous term is compensated by a source term at the right-hand side of the momentum balance equation.
The same curves as for the Euler case are shown on Figures \ref{fig:vy_NS}-\ref{fig:p_NS}.
The numerical error for the velocity (as defined before) is almost independent of the Mach number: it always falls in the interval $(0.099,0.1)$.
For the pressure, the same scaled quantity as before reads: $0.0108$, $0.0138$, $0.0147$, $0.0146$ $0.0183$, for the tested Mach numbers from $1$ to $0.0001$.
The conclusions are thus the same, up to the minor difference that the results in the Navier-Stokes case are slightly more accurate.

\begin{figure}[!t]
\begin{center} \includegraphics[width=0.8\textwidth]{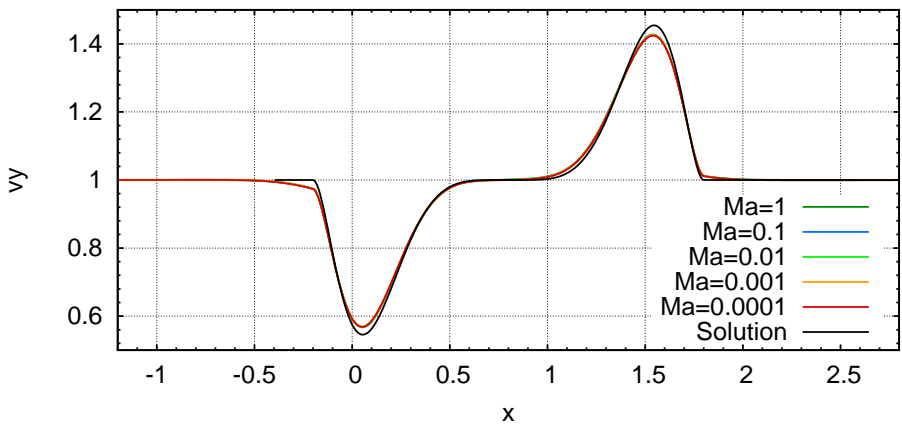} \end{center}
\caption{\label{fig:vy_NS}
Navier-Stokes case -- Second component of the velocity at $t=0.8$ along the line $x_2=0.8$ for various Mach numbers (all the curves canot be distinguished) and analytical solution.}
\end{figure}

\begin{figure}[!t]
\begin{center} \includegraphics[width=0.8\textwidth]{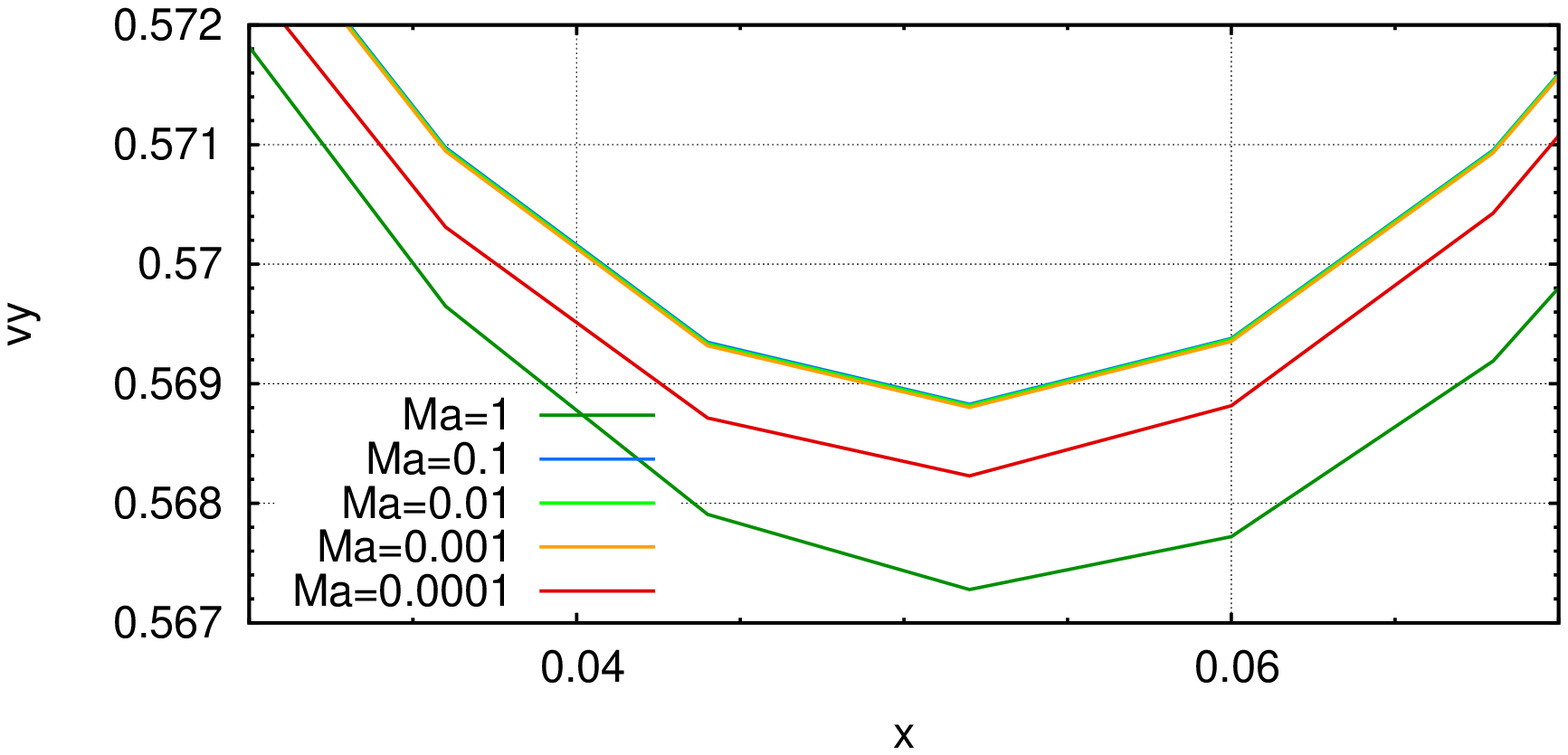} \end{center}
\caption{\label{fig:vy_zoom_NS}
Navier-Stokes case -- Second component of the velocity at $t=0.8$ along the line $x_2=0.8$ for various Mach numbers ; zoom near the minimum of the velocity.}
\end{figure}

\begin{figure}[!t]
\begin{center} \includegraphics[width=0.8\textwidth]{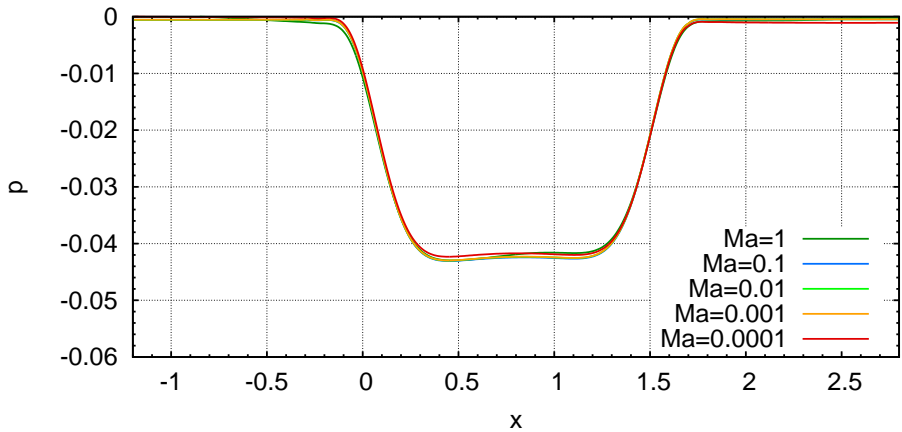} \end{center}
\caption{\label{fig:p_NS}
Navier-Stokes case -- Difference between the local pressure and the pressure outside the vortex scaled by the speed of sound, along the line $x_2=0.8$, for various Mach numbers.}
\end{figure}
%
%
\appendix
\section{Existence of a time step satisfying the CFL condition \eqref{CFL} for all Mach numbers} \label{sec:tstep_cfl}

In this appendix, we prove that for the semi-implicit scheme \eqref{eq:explicit_scheme}, it is possible to define a time-step $\delta t$ which satisfies the CFL condition \eqref{CFL} independently of the Mach number $\ma$.
This allows to prove the discrete global entropy estimate \eqref{eq:esch_stab} independently of the Mach number.
More precisely, we prove that under a convective CFL condition which is more restrictive than expected ($\delta t\leq C h_\disc^{1+\frac d2}$ instead of $\delta t \leq C h_\disc$ with $C$ independent of $\ma$), the discrete global entropy estimate \eqref{eq:esch_stab} holds true.

\medskip
Let $\theta_\disc$ be a measure of the regularity of the mesh in the classical finite element sense (see for instance \cite{lat-18-conv}).
Then for all $K\in\mesh$ and $\edge,\edge'\in\edges(K)$, we have $|\edge'|/|D_\edge|\leq C_1(\theta_\disc) \, h_\disc^{-1}$ for some nondecreasing function  $C_1$. Moreover, thanks to classical inverse inequalities, there exists a nondecreasing function $C_2$ such that  $\norm{\bfu}_{\xL^\infty(\Omega)^d}\leq C_2(\theta_\disc) h_\disc^{-\frac d2}\norm{\bfu}_{\xL^2(\Omega)^d}$ for all $\bfu\in\xbfH_{\edges}(\Omega)$.

\medskip
Since the initial data satisfies \eqref{eq:u0_rho0_wp_exp}, the initial total energy is uniformly bounded with respect to $\ma$.
We denote by $C_0$ a uniform upper bound (which depends on the constant $C$ in \eqref{eq:u0_rho0_wp_exp} and on the spatial discretization $\disc$), which thus satisfies for all $\ma>0$:
\[
\frac 1 2  \sum_{\edge\in \edgesint} |D_\edge| \ \rho^{-1}_\Ds\ |\bfu^0_\edge|^2
+ \frac 1 {\ma^2} \sum_{K\in \mesh}|K| \, \Pi_\gamma(\rho^0_K) \leq C_0.
\]
We define:
\[
 C(d,\theta_\disc,C_0) := \frac{1}{2\sqrt{2}d\,C_1(\theta_\disc)\,C_2(\theta_\disc) C_0^{\frac 12}}.
\]
We have the following result.

\begin{proposition}
Let $\eta$ be a fixed small parameter in $(0,1)$. If the time step $\delta t$ satisfies the following CFL condition, which is independent of the Mach number:
\begin{equation} \label{CFL-ind}
\delta t \leq (1-\eta) \min \left ( C(d,\theta_\disc,C_0) h_\disc^{1+\frac d2} \ ,
\  \min\limits_{\edge\in\edgesint}  \, \frac{2\,|D_\edge|}{ \varrho(\mcal{A})} \right ),
\end{equation}
then there exists $\bar \ma$ (depending on $\gamma$, the spatial discretization $\disc$, the constant $C$ in \eqref{eq:u0_rho0_wp_exp} and on $\eta$) such that for all $\ma\in(0,\bar\ma)$:
\begin{equation} \label{eq:global-ent-expl}
\max\limits_{n=0,..,N}\ \Bigl\lbrace \frac 1 2  \sum_{\edge\in \edgesint} |D_\edge| \ \rho^{n-1}_\Ds\ |\bfu^n_\edge|^2
+\frac 1 {\ma^2} \sum_{K\in \mesh}|K| \, \Pi_\gamma(\rho^n_K) \Bigr \rbrace \leq C_0.
\end{equation}
\end{proposition}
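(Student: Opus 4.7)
The plan is a bootstrap argument on the time index $n$: for $\ma$ below some threshold $\bar\ma$ depending only on $\eta$, $\gamma$, $\disc$ and $C_0$, I will show that \eqref{eq:global-ent-expl} holds at every $n = 0,\dots,N$. The base case $n=0$ is ensured by the very definition of $C_0$. For the induction step, assume \eqref{eq:global-ent-expl} holds at all $k\leq n$; the aim is to apply Lemma \ref{lmm:loc_ener_esch} at step $n$, whose local inequality \eqref{esch_loc_ener} is telescoping and immediately propagates \eqref{eq:global-ent-expl} to $n+1$. The only missing ingredient is that the Mach-dependent CFL \eqref{CFL} be satisfied at step $n$, and the main work is to deduce it from the Mach-independent condition \eqref{CFL-ind} together with the induction hypothesis.

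From the induction hypothesis, two controls are extracted. The relative entropy bound $\sum_{K}|K|\,\Pi_\gamma(\rho^n_K)\leq C_0\,\ma^2$, combined with the conservation of total mass (which keeps $\rho^n$ bounded uniformly in $\ma$ on the fixed mesh), with the lower bound on $\Pi_\gamma$ from Lemma \ref{lem:pi}, and with the equivalence of norms on the finite-dimensional space $\xL_\mesh(\Omega)$, will yield $\max_{K\in\mesh}|\rho^n_K-1|\leq C_\disc\,\ma$, hence $\rho^n_\Ds\in(1-\eta,1+\eta)$ for every $\edge\in\edgesint$ once $\ma\leq\bar\ma$; the same argument at step $n-1$ gives $\rho^{n-1}_\Ds\geq 1-\eta$. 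The kinetic-energy piece of the induction hypothesis then reads $\sum_\edge|D_\edge|\,|\bfu^n_\edge|^2\leq 2C_0/(1-\eta)$, so the inverse inequality produces
\[
\norm{\bfu^n}_{\xL^\infty(\Omega)^d} \leq C_2(\theta_\disc)\,h_\disc^{-d/2}\,\sqrt{2C_0/(1-\eta)}.
\]

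With these two estimates in hand, the verification of \eqref{CFL} becomes routine. Its diffusive part follows at once from the second factor in \eqref{CFL-ind} together with $\rho^n_\Ds\geq 1-\eta$. For the convective part, assumption (H3) dominates each $|\fluxd(\rho^n,\bfu^n)|$ by a primal flux of the form $|\edge'|\,\rho^n_{\edge'}\,|\bfu^n_{\edge'}\cdot\bfn|$, itself controlled by $C_1(\theta_\disc)\,h_\disc^{-1}\,|D_\edge|\,(1+\eta)\,\norm{\bfu^n}_{\xL^\infty(\Omega)^d}$; summing over the (uniformly finite) set of dual faces of $D_\edge$ and plugging in the above $\xL^\infty$ bound on $\bfu^n$ reproduces exactly the first factor in \eqref{CFL-ind} with the specified constant $C(d,\theta_\disc,C_0)$, once the $(1-\eta)$ factor in \eqref{CFL-ind} has absorbed both the deviations of $\rho^n_\Ds$ from $1$, the $(1+\eta)$ coming from $\rho^n_{\edge'}$, and the combinatorial passage from the single CFL \eqref{CFL} to its two separate halves.

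The main obstacle I foresee is ensuring the threshold $\bar\ma$ remains independent of $n$. This is indeed the case: the density estimate depends only on the right-hand side $C_0\,\ma^2$, whose constant does not change with $n$ (it is precisely what the induction hypothesis provides). Therefore a single threshold $\bar\ma$ — chosen to guarantee $\rho^n_\Ds\in(1-\eta,1+\eta)$ and the positivity of $\rho^{-1}$ through an analogue of Lemma \ref{lmm:sch:init} — works simultaneously at every step, and the induction closes.
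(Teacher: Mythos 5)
Your proposal is correct and follows essentially the same route as the paper's proof: an induction in $n$ in which the uniform entropy bound at step $n$ yields, via Lemma \ref{lem:pi}, norm equivalence on the fixed mesh, hypothesis (H3) and an inverse inequality, the Mach-dependent CFL condition \eqref{CFL} at that step for $\ma$ small enough, which in turn propagates the bound to step $n+1$ through Lemma \ref{lmm:loc_ener_esch}. The only cosmetic difference is that you argue for a single threshold $\bar\ma$ valid at every step (correctly, since the induction always delivers the same constant $C_0$), whereas the paper builds a finite decreasing sequence $\ma^0\geq\dots\geq\ma^{N-1}$ and sets $\bar\ma=\ma^{N-1}$.
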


\medskip
\begin{remark}
As a by-product of the proof, we can see that, if the time step $\delta t$ satisfies the CFL condition \eqref{CFL-ind}, then it satisfies the classical CFL condition \eqref{CFL} independently of the Mach number $\ma$, and this also allows to prove \eqref{eq:global-ent-expl}.
Actually the proof given hereunder consists in proving \eqref{CFL} and \eqref{eq:global-ent-expl} simultaneously, thanks to an induction process.
\end{remark}

\begin{proof}
Let $\delta t$ satisfy the CFL condition \eqref{CFL-ind}.
Let $(\alpha^n_\ma)_{n=0,..,N}$ and $(\beta^n_\ma)_{n=0,..,N}$ be the two sequences defined by:
\begin{align*}
 &\alpha^n_\ma := \min \left ( 
 \min\limits_{\edge\in\edgesint}  \, \frac{\rho_\Ds^n}{  |D_\edge|^{-1}\sum_{\edged\in\edgesd(D_\edge)}\, \bigl( \fluxd(\rho^n,\bfu^n)\bigr )^{-}}
 \ , \
 \min\limits_{\edge\in\edgesint}  \, \frac{2\, \rho_\Ds^n }{  |D_\edge|^{-1}\varrho(\mcal{A})} \right ),\\[2ex]
 & \beta^n_\ma := \frac 1 2  \sum_{\edge\in \edgesint} |D_\edge| \ \rho^{n-1}_\Ds\ |\bfu^n_\edge|^2+\frac 1 {\ma^2} \sum_{K\in \mesh}|K| \, \Pi_\gamma(\rho^n_K).
\end{align*}
By Lemma \ref{lmm:loc_ener_esch}, we know that for all $0\leq n \leq N-1$ and for all $\ma>0$, $\delta t \leq \alpha^n_\ma \Longrightarrow \beta^{n+1}_\ma \leq \beta^n_\ma$. Let us prove that for all $0\leq n \leq N-1$:
\begin{multline}
\label{Hyp-rec}
\Bigl(\exists \,  \ma^n >0 \ \text{such that} \ \beta^{n+1}_\ma \leq \beta^n_\ma \leq .. \leq \beta^0_\ma \leq C_0  \ \text{for all}  \ \ma\in(0,\ma^n)\Bigr) 
\\
\Longrightarrow \qquad \Bigl ( \exists \,  \ma^{n+1}>0 \ \text{with} \ \ma^{n+1}\leq \ma^n \  \text{such that} \ \delta t \leq \alpha^{n+1}_\ma \ \text{for all}  \ \ma\in(0,\ma^{n+1}) \Bigr). 
\end{multline}

\bigskip
Let us assume that the left-hand side of the above implication holds true for some $n$ such that $0 \leq n \leq N-1$.
Then, for every $\edge=K|L\in\edgesint$, we have the following inequalities:
\begin{equation} \label{temp1}
\begin{aligned}
|D_\edge|^{-1}\sum_{\edged\in\edgesd(D_\edge)}\, \bigl( \fluxd(\rho^{n+1},\bfu^{n+1})\bigr )^{-} 
& \leq 2d\, \max\limits_{\edge'\in\edges(K)\cup\edges(L)} |D_\edge|^{-1} \, |F_{K,\edge'}(\rho^{n+1},\bfu^{n+1})| \\
& \leq 2d\, \max\limits_{\edge'\in\edges(K)\cup\edges(L)} \frac{|\edge'|}{|D_\edge|}\, |\rho^{n+1}_{\edge'}| \, |\bfu^{n+1}_{\edge'}|\\[1ex]
& \leq  2d\, C_1(\theta_\disc) h_\disc^{-1} \bigl (\max\limits_{K\in\mesh}|\rho^{n+1}_K| \bigr ) \bigl (\max\limits_{\edge\in\edgesint} |\bfu^{n+1}_\edge| \bigr).
\end{aligned}
\end{equation}
The first inequality in \eqref{temp1} follows from hypothesis (H3) in \eqref{eq:F_bounded}. 
We then remark that $\max\limits_{\edge\in\edgesint} |\bfu^{n+1}_\edge|=\norm{\bfu^{n+1}}_{\xL^\infty(\Omega)^d}\leq C_2(\theta_\disc) h_\disc^{-\frac d2}\norm{\bfu^{n+1}}_{\xL^2(\Omega)^d}$ with
\[
\begin{aligned}
\norm{\bfu^{n+1}}_{\xL^2(\Omega)^d} 
&\leq \Bigl ( \frac{2}{\min\limits_{K\in\mesh} \rho_K^n}\Bigr)^{\frac 12} \, 
\Bigl(   \frac 1 2  \sum_{\edge\in \edgesint} |D_\edge| \ \rho^n_\Ds\ |\bfu^{n+1}_\edge|^2
+\frac 1 {\ma^2} \sum_{K\in \mesh}|K| \, \Pi_\gamma(\rho^{n+1}_K) \Bigr)^{\frac 12}.
\end{aligned}
\]
Hence, since the left-hand side of \eqref{Hyp-rec} is assumed to hold true, we obtain that
\[
\norm{\bfu^{n+1}}_{\xL^2(\Omega)^d} \leq \Bigl ( \frac{2}{\min\limits_{K\in\mesh} \rho_K^n}\Bigr)^{\frac 12} \, C_0^{\frac 12}.
\]
Injecting in \eqref{temp1}, we obtain:
\[
\min\limits_{\edge\in\edgesint}\,\frac{\rho_\Ds^{n+1}}{|D_\edge|^{-1}\sum_{\edged\in\edgesd(D_\edge)}\bigl( \fluxd(\rho^{n+1},\bfu^{n+1})\bigr )^{-}}
\geq 
C(d,\theta_\disc,C_0) \, h_\disc^{1+\frac d2} \, \frac{\min\limits_{K\in\mesh}\rho_K^{n+1}}{\max\limits_{K\in\mesh}\rho^{n+1}_K} \, \Bigl(\min\limits_{K\in\mesh} \rho_K^n\Bigr)^{\frac 12}.
\]
Invoking once again the left hand side of \eqref{Hyp-rec}, we have $\beta_\ma^{n+1} \leq \beta_\ma^n \leq C_0$ for all $\ma\in(0,\ma^n)$, which implies that 
$\norm{\Pi_\gamma(\rho^{n+1})}_{\xL^1(\Omega)}\leq C_0 \ma^2$ and $\norm{\Pi_\gamma(\rho^n)}_{\xL^1(\Omega)} \leq C_0 \ma^2$ for all $\ma\in(0,\ma^n)$. By the results of Lemma \ref{lem:pi}, this implies that for all $K\in\mesh$, $\rho_K^n\to 1$ and $\rho_K^{n+1}\to 1$ as $\ma\to0$. Hence, there exists $0<\ma^{n+1}\leq\ma^n$ such that for all $\ma\in(0,\ma^{n+1})$:
\begin{equation}
\label{temp2}
\min\limits_{\edge\in\edgesint}\,\frac{\rho_\Ds^{n+1}}{|D_\edge|^{-1}\sum_{\edged\in\edgesd(D_\edge)}\bigl( \fluxd(\rho^{n+1},\bfu^{n+1})\bigr )^{-}}
\geq (1-\eta) C(d,\theta_\disc,C_0) \, h_\disc^{1+\frac d2}.
\end{equation}
Obviously, $\ma^{n+1}$ depends on $\eta$.
It also depends on $\gamma$ and on the spatial discretization (namely $\theta_\disc$ and $h_\disc$) since one has to bound the $\xL^\infty$-norm of $\rho^n-1$ (resp. of $\rho^{n+1}-1$) by the $\xL^{\min(2,\gamma)}$-norm of $\rho^n-1$  (resp. of $\rho^{n+1}-1$) thanks to an inverse inequality, combined with the estimates of Lemma \ref{lem:pi}.
Upon diminishing $\ma^{n+1}$, we can also prove that for all $\ma\in(0,\ma^{n+1})$:
\begin{equation} \label{temp3}
\min_{\edge\in\edgesint}  \, \frac{2\, \rho_\Ds^{n+1} }{  |D_\edge|^{-1}\varrho(\mcal{A})} \geq 
\Big(\min_{\edge\in\edgesint}  \, \frac{2\, |D_\edge|}{\varrho(\mcal{A})} \Big) \, \bigl(\min_{K\in\mesh}\rho_K^{n+1} \bigr) \geq
(1-\eta) \, \min_{\edge\in\edgesint}  \, \frac{2\, |D_\edge|}{\varrho(\mcal{A})}.
\end{equation}
Combining \eqref{temp2} and \eqref{temp3}, we obtain that $\delta t \leq \alpha^{n+1}_\ma$ for all $\ma\in(0,\ma^{n+1})$.
Hence, by a straightforward induction process, the proposition is proved  with $\bar \ma := \ma^{N-1}$ provided that there exists $\ma^0$ such that $\beta^1_\ma\leq\beta_\ma^0$ for all $\ma\in(0,\ma^0)$.
It is sufficient to prove that for some $\ma^0>0$,  $\delta t \leq \alpha_\ma^0$ for all $\ma\in(0,\ma^0)$. 
Following similar steps as above, this is easily proved with $\ma^0$ only depending on the constant $C$ in \eqref{eq:u0_rho0_wp_exp} and on the discretization.
\end{proof}
%
%
\bibliographystyle{abbrv}
\bibliography{lowM}

\begin{thebibliography}{10}

\bibitem{alaz-06-low}
T.~Alazard.
\newblock Low {M}ach number limit of the full {N}avier-{S}tokes equations.
\newblock {\em Archive for Rational Mechanics and Analysis}, 180:1--73, 2006.

\bibitem{ans-11-anl}
G.~Ansanay-Alex, F.~Babik, J.-C. Latch\'e, and D.~Vola.
\newblock An {L}$^2$-stable approximation of the {N}avier-{S}tokes convection
  operator for low-order non-conforming finite elements.
\newblock {\em International Journal for Numerical Methods in Fluids},
  66:555--580, 2011.

\bibitem{ara-81-pot}
A.~Arakawa and V.~Lamb.
\newblock A potential enstrophy and energy conserving scheme for the shallow
  water equations.
\newblock {\em Monthly Weather Review}, 109:18--36, 1981.

\bibitem{bij-98-uni}
H.~Bijl and P.~Wesseling.
\newblock A unified method for computing incompressible and compressible flows
  in boundary-fitted coordinates.
\newblock {\em Journal of Computational Physics}, 141:153--173, 1998.

\bibitem{califs}
CALIF$^3$S.
\newblock A software components library for the computation of reactive
  turbulent flows.
\newblock \\ \texttt{https://gforge.irsn.fr/gf/project/isis}.

\bibitem{cas-84-pre}
V.~Casulli and D.~Greenspan.
\newblock Pressure method for the numerical solution of transient, compressible
  fluid flows.
\newblock {\em International Journal for Numerical Methods in Fluids},
  4:1001--1012, 1984.

\bibitem{cha-16-all}
C.~Chalons, M.~Girardin, and S.~Kokh.
\newblock An all-regime {L}agrange-projection like scheme for the gas dynamics
  equations on unstructured meshes.
\newblock {\em Communications in Computational Physics}, 20:188--233, 2016.

\bibitem{cho-68-num}
A.~Chorin.
\newblock Numerical solution of the {N}avier-{S}tokes equations.
\newblock {\em Mathematics of Computation}, 22:745--762, 1968.

\bibitem{col-99-pro}
P.~Colella and K.~Pao.
\newblock A projection method for low speed flows.
\newblock {\em Journal of Computational Physics}, 149:245--269, 1999.

\bibitem{cor-12-asy}
F.~Cordier, P.~Degond, and A.~Kumbaro.
\newblock An asymptotic-preserving all-speed scheme for the {E}uler and
  {N}avier-{S}tokes equations.
\newblock {\em Journal of Computational Physics}, 231:5685--5704, 2012.

\bibitem{cro-73-con}
M.~Crouzeix and P.~Raviart.
\newblock Conforming and nonconforming finite element methods for solving the
  stationary {S}tokes equations.
\newblock {\em RAIRO S\'erie Rouge}, 7:33--75, 1973.

\bibitem{dan-02-zer}
R.~Danchin.
\newblock Zero {M}ach number limit in critical spaces for compressible
  {N}avier-{S}tokes equations.
\newblock {\em Annales Scientifiques de l'\'Ecole Normale Sup\'erieure},
  35:27--75, 2002.

\bibitem{deg-11-all}
P.~Degond and M.~Tang.
\newblock All speed scheme for the low {M}ach number limit of the isentropic
  {E}uler equations.
\newblock {\em Communications in Computational Physics}, 10:1--31, 2011.

\bibitem{del-10-ana}
S.~Dellacherie.
\newblock Analysis of {G}odunov type schemes applied to the compressible
  {E}uler system at low {M}ach number.
\newblock {\em Journal of Computational Physics}, 229:978--1016, 2010.

\bibitem{des-99-gre}
B.~Desjardins and E.~Grenier.
\newblock Low {M}ach number limit of viscous compressible flows in the whole
  space.
\newblock {\em Proceedings of the Royal Society of London A: Mathematical,
  Physical and Engineering Sciences}, 455:2271--2279, 1999.

\bibitem{des-99-mas}
B.~Desjardins, E.~Grenier, P.-L. Lions, and N.~Masmoudi.
\newblock Incompressible limit for solutions of the isentropic
  {N}avier-{S}tokes equations with {D}irichlet boundary conditions.
\newblock {\em Journal de Math\'ematiques Pures et Appliqu\'ees}, 78:461--471,
  1999.

\bibitem{ebi-77-mot}
D.~G. Ebin.
\newblock The motion of slightly compressible fluids viewed as a motion with
  strong constraining force.
\newblock {\em Annals of Mathematics. Second Series}, 105:141--200, 1977.

\bibitem{ebi-82-mot}
D.~G. Ebin.
\newblock Motion of slightly compressible fluids in a bounded domain. {I}.
\newblock {\em Communications on Pure and Applied Mathematics}, 35:451--485,
  1982.

\bibitem{eym-10-conv}
R.~Eymard, T.~Gallou{\"e}t, R.~Herbin, and J.-C. Latch\'e.
\newblock Convergence of the {MAC} scheme for the compressible {S}tokes
  equations.
\newblock {\em SIAM Journal on Numerical Analysis}, 48:2218--2246, 2010.

\bibitem{fei-07-asy}
E.~Feireisl.
\newblock Asymptotic analysis of the full {N}avier-{S}tokes-{F}ourier system:
  from compressible to incompressible fluid flows.
\newblock {\em Russian Mathematical Surveys}, 62:511--533, 2007.

\bibitem{fei-07-low}
E.~Feireisl and A.~Novotn\'y.
\newblock The low {M}ach number limit for the full {N}avier-{S}tokes-{F}ourier
  system.
\newblock {\em Archive for Rational Mechanics and Analysis}, 186:77--107, 2007.

\bibitem{fei-10-low}
E.~Feireisl, A.~Novotn\'y, and H.~Petzeltov\'a.
\newblock Low {M}ach number limit for the {N}avier-{S}tokes system on unbounded
  domains under strong stratification.
\newblock {\em Communications in Partial Differential Equations}, 35:68--88,
  2010.

\bibitem{gal-03-res}
I.~Gallagher.
\newblock R\'esultats r\'ecents sur la limite incompressible.
\newblock {\em S\'eminaire Bourbaki 2003-2004}, 926, 2003.

\bibitem{gas-11-dis}
L.~Gastaldo, R.~Herbin, and J.-C. Latch\'e.
\newblock A discretization of phase mass balance in fractional step algorithms
  for the drift-flux model.
\newblock {\em IMA Journal of Numerical Analysis}, 31:116--146, 2011.

\bibitem{gra-16-unc}
D.~Grapsas, R.~Herbin, W.~Kheriji, and J.-C. Latch\'e.
\newblock An unconditionally stable staggered pressure correction scheme for
  the compressible {N}avier-{S}tokes equations.
\newblock {\em SMAI Journal of Computational Mathematics}, 2:51--97, 2016.

\bibitem{gue-06-ove}
J.-L. Guermond, P.~Minev, and J.~Shen.
\newblock An overview of projection methods for incompressible flows.
\newblock {\em Computer Methods in Applied Mechanics and Engineering},
  195:6011--6045, 2006.

\bibitem{gui-06-rec}
H.~Guillard.
\newblock Recent developments in the computation of compressible low {M}ach
  flows.
\newblock {\em Flow, Turbulence and Combustion}, 76:363--369, 2006.

\bibitem{gui-04-beh}
H.~Guillard and A.~Murrone.
\newblock On the behaviour of upwind schemes in the low {M}ach number limit:
  {II}. {G}odunov type schemes.
\newblock {\em Computers \& Fluids}, 33:655--675, 2004.

\bibitem{gui-99-beh}
H.~Guillard and C.~Viozat.
\newblock On the behaviour of upwind schemes in the low {M}ach number limit.
\newblock {\em Computers \& Fluids}, 28:63--86, 1999.

\bibitem{haa-12-all}
J.~Haack, S.~Jin, and J.-G. Liu.
\newblock An all-speed asymptotic-preserving method for the isentropic {E}uler
  and {N}avier-{S}tokes equations.
\newblock {\em Communications in Computational Physics}, 12:955--980, 2012.

\bibitem{har-68-num}
F.~Harlow and A.~Amsden.
\newblock Numerical calculation of almost incompressible flow.
\newblock {\em Journal of Computational Physics}, 3:80--93, 1968.

\bibitem{har-71-num}
F.~Harlow and A.~Amsden.
\newblock A numerical fluid dynamics calculation method for all flow speeds.
\newblock {\em Journal of Computational Physics}, 8:197--213, 1971.

\bibitem{har-65-num}
F.~Harlow and J.~Welsh.
\newblock Numerical calculation of time-dependent viscous incompressible flow
  of fluid with free surface.
\newblock {\em Physics of Fluids}, 8:2182--2189, 1965.

\bibitem{her-13-pre}
R.~Herbin, W.~Kheriji, and J.-C. Latch\'e.
\newblock Pressure correction staggered schemes for barotropic one-phase and
  two-phase flows.
\newblock {\em Computers \& Fluids}, 88:524--542, 2013.

\bibitem{her-14-ons}
R.~Herbin, W.~Kheriji, and J.-C. Latch\'e.
\newblock On some implicit and semi-implicit staggered schemes for the shallow
  water and {E}uler equations.
\newblock {\em Mathematical Modelling and Numerical Analysis}, 48:1807--1857,
  2014.

\bibitem{her-17-con}
R.~Herbin, J.-C. Latch\'e, and T.~Nguyen.
\newblock Consistent segregated staggered schemes with explicit steps for the
  isentropic and full {E}uler equations.
\newblock {\em Mathematical Modelling and Numerical Analysis, to appear}, 2018.

\bibitem{iss-85-sol}
R.~Issa.
\newblock Solution of the implicitly discretised fluid flow equations by
  operator splitting.
\newblock {\em Journal of Computational Physics}, 62:40--65, 1985.

\bibitem{iss-86-com}
R.~Issa, A.~Gosman, and A.~Watkins.
\newblock The computation of compressible and incompressible recirculating
  flows by a non-iterative implicit scheme.
\newblock {\em Journal of Computational Physics}, 62:66--82, 1986.

\bibitem{kar-89-pre}
K.~Karki and S.~Patankar.
\newblock Pressure based calculation procedure for viscous flows at all speeds
  in arbitrary configurations.
\newblock {\em AIAA Journal}, 27:1167--1174, 1989.

\bibitem{kla-81-sing}
S.~Klainerman and A.~Majda.
\newblock Singular limits of quasilinear hyperbolic systems with large
  parameters and the incompressible limit of compressible fluids.
\newblock {\em Communications on Pure and Applied Mathematics}, 34:481--524,
  1981.

\bibitem{kre-80-pro}
H.-O. Kreiss.
\newblock Problems with different time scales for partial differential
  equations.
\newblock {\em Communications on Pure and Applied Mathematics}, 33:399--439,
  1980.

\bibitem{kwa-09-met}
N.~Kwatra, J.~Su, J.~Gr\'etarsson, and R.~Fedkiw.
\newblock A method for avoiding the acoustic time step restriction in
  compressible flow.
\newblock {\em Journal of Computational Physics}, 228:4146--4161, 2009.

\bibitem{lat-18-disc}
J.-C. Latch\'e, B.~Piar, and K.~Saleh.
\newblock A discrete kinetic energy preserving convection operator for variable
  density flows on locally refined staggered meshes.
\newblock {\em in preparation}, 2018.

\bibitem{lat-18-conv}
J.-C. Latch\'e and K.~Saleh.
\newblock A convergent staggered scheme for variable density incompressible
  {N}avier-{S}tokes equations.
\newblock {\em Mathematics of Computation, to appear}, 2018.

\bibitem{lio-98-inc}
P.-L. Lions and N.~Masmoudi.
\newblock Incompressible limit for a viscous compressible fluid.
\newblock {\em Journal de Math\'ematiques Pures et Appliqu\'ees}, 77:585--627,
  1998.

\bibitem{maj-85-the}
A.~Majda and J.~Sethian.
\newblock The derivation and numerical solution of the equations for zero
  {M}ach number combustion.
\newblock {\em Combustion Science and Technology}, 42:185--205, 1985.

\bibitem{mcg-90-sho}
J.~McGuirk and G.~Page.
\newblock Shock capturing using a pressure-correction method.
\newblock {\em AIAA Journal}, 28:1751--1757, 1990.

\bibitem{met-01-inc}
G.~M\'etivier and S.~Schochet.
\newblock The incompressible limit of the non-isentropic {E}uler equations.
\newblock {\em Archive for Rational Mechanics and Analysis}, 158:61--90, 2001.

\bibitem{mog-12-pre}
Y.~Moguen, T.~Kouksou, P.~Bruel, J.~Vierendeels, and E.~Dick.
\newblock Pressure-velocity coupling allowing acoustic calculation in low
  {M}ach number flow.
\newblock {\em Journal of Computational Physics}, 231:5522--5541, 2012.

\bibitem{noe-14-wea}
S.~Noelle, G.~Bispen, K.~Arun, M.~Luk\'a\u{c}ov\'a-Medvi\u{d}ov\'a, and C.-D.
  Munz.
\newblock A weakly asymptotic preserving low {M}ach number scheme for the
  {E}uler equations of gas dynamics.
\newblock {\em SIAM Journal of Scientific Computing}, 36:989--1024, 2014.

\bibitem{ran-92-sim}
R.~Rannacher and S.~Turek.
\newblock Simple nonconforming quadrilateral {S}tokes element.
\newblock {\em Numerical Methods for Partial Differential Equations},
  8:97--111, 1992.

\bibitem{sch-87-hyp}
S.~Schochet.
\newblock Hyperbolic-hyperbolic singular limits.
\newblock {\em Communications in Partial Differential Equations}, 12:589--632,
  1987.

\bibitem{tem-69-sur}
R.~Temam.
\newblock Sur l'approximation de la solution des \'equations de
  {N}avier-{S}tokes par la m\'ethode des pas fractionnaires {II}.
\newblock {\em Archive for Rational Mechanics and Analysis}, 33:377--385, 1969.

\bibitem{tur-99-pre}
E.~Turkel.
\newblock Preconditionning techniques in computational fluid dynamics.
\newblock {\em Annual Review of Fluid Mechanics}, 31:385--416, 1999.

\bibitem{van-01-sta}
D.~Van~der Heul, C.~Vuik, and P.~Wesseling.
\newblock Stability analysis of segregated solution methods for compressible
  flow.
\newblock {\em Applied Numerical Mathematics}, 38:257--274, 2001.

\bibitem{van-03-con}
D.~Van~der Heul, C.~Vuik, and P.~Wesseling.
\newblock A conservative pressure-correction method for flow at all speeds.
\newblock {\em Computers \& Fluids}, 32:1113--1132, 2003.

\bibitem{van-87-seg}
J.~Van~Dormaal, G.~Raithby, and B.~McDonald.
\newblock The segregated approach to predicting viscous compressible fluid
  flows.
\newblock {\em Transactions of the ASME}, 109:268--277, 1987.

\bibitem{vid-06-sup}
D.~Vidovi\'c, A.~Segal, and P.~Wesseling.
\newblock A superlinearly convergent {M}ach-uniform finite volume method for
  the {E}uler equations on staggered unstructured grids.
\newblock {\em Journal of Computational Physics}, 217:277--294, 2006.

\bibitem{wal-02-sem}
C.~Wall, C.~Pierce, and P.~Moin.
\newblock A semi-implicit method for resolution of acoustic waves in low {M}ach
  number flows.
\newblock {\em Journal of Computational Physics}, 181:545--563, 2002.

\bibitem{wen-02-mac}
I.~Wenneker, A.~Segal, and P.~Wesseling.
\newblock A {M}ach-uniform unstructured staggered grid method.
\newblock {\em International Journal for Numerical Methods in Fluids},
  40:1209--1235, 2002.

\bibitem{yoo-99-uni}
S.~Yoon and T.~Yabe.
\newblock The unified simulation for incompressible and compressible flow by
  the predictor-corrector scheme based on the {CIP} method.
\newblock {\em Computer Physics Communications}, 119:149--158, 1999.

\bibitem{zak-17-mac}
H.~Zakerzadeh.
\newblock On the {M}ach-uniformity of the {L}agrange-projection scheme.
\newblock {\em Mathematical Modelling and Numerical Analysis}, 51:1343--1366,
  2017.

\end{thebibliography}
\end{document}